\documentclass[reqno]{amsart}

\usepackage{amsmath,amsxtra,amscd,amsthm,amsfonts,amssymb,enumitem,microtype,graphicx,epstopdf,mathrsfs,eucal,comment,tikz-cd,ifdraft,bbm,enumitem,xspace}
\usepackage[hidelinks, pagebackref, bookmarks=false]{hyperref}
\hypersetup{
    colorlinks,
    citecolor=black,
    filecolor=black,
    linkcolor=blue,
    urlcolor=black,
}
\usepackage[nospace,noadjust]{cite}
\usepackage{microtype}
\usepackage{url}

\usepackage{thmtools} 
\usepackage{thm-restate}

\input xy
\xyoption{all}

\numberwithin{equation}{section} 
\numberwithin{figure}{section} 

\makeatletter
\let\c@equation\c@figure
\makeatother

\newtheorem{theorem}[equation]{Theorem}
\newtheorem{corollary}[equation]{Corollary}
\newtheorem{lemma}[equation]{Lemma}
\newtheorem{prop}[equation]{Proposition}

\newtheorem{conjecture}[equation]{Conjecture}

\theoremstyle{remark}
\newtheorem{remark}[equation]{Remark}
\newtheorem{example}[equation]{Example}

\newtheorem*{claim*}{Claim}

\theoremstyle{definition}
\newtheorem{defn}[equation]{Definition}

\newcommand\nc{\newcommand}
\nc\on{\operatorname}
\newcommand\CC{{\mathbb C}}
\newcommand\NN{{\mathbb N}}
\newcommand\BC{{\mathbb C}}
\newcommand\BQ{{\mathbb Q}}
\newcommand\BQbar{\overline\BQ}
\newcommand\RR{{\mathbb R}}

\newcommand\ZZ{{\mathbb Z}}

\newcommand\OO{{\mathcal O}}

\newcommand\fp{{\mathfrak p}}
\newcommand\fq{{\mathfrak q}}
\newcommand\fm{{\mathfrak m}}

\nc\Hom{\on{Hom}}
\newcommand\Aut{\operatorname{Aut}}
\newcommand\End{\operatorname{End}}
\newcommand\ad{\mathrm{ad}}

\nc\Sections{\on{Sections}}
\nc\Sym{\on{Sym}}
\nc\Spec{\on{Spec}}
\nc\Specm{\on{Specm}}
\nc\ul{\underline}
\nc{\dfp}{\overset{\cdot}{\fp}}
\nc{\dfq}{\overset{\cdot}{\fq}}
\nc{\dfm}{\overset{\cdot}{\fm}}
\newcommand\W{W}
\newcommand\wt{\operatorname{wt}}
\newcommand\HH{\mathrm{H}}
\newcommand\B{\mathsf{B}}
\newcommand\D{\mathsf{D}}

\newcommand\Mod{\mathbf{Mod}}
\newcommand\Rep{\mathbf{Rep}}
\newcommand\cts{\mathrm{cts}}
\newcommand\cris{\mathrm{cris}}
\newcommand\st{\mathrm{st}}
\newcommand\pst{\mathrm{pst}}
\newcommand\dR{\mathrm{dR}}
\newcommand\mix{\mathrm{mix}}
\newcommand\pure{\mathrm{pure}}

\newcommand\Aff{\mathbf{Aff}}
\newcommand\Lie{\mathrm{Lie}}

\newcommand\gr{\mathrm{gr}}
\newcommand\dual{*}
\newcommand\Res{\mathrm{Res}}

\newcommand\Fd{E}
\newcommand\Fdbar{\overline E}
\newcommand\Kbar{\overline K}
\newcommand\kbar{\overline k}

\newcommand\Weil{W}
\newcommand\WeilD{{}'\Weil}
\renewcommand\1{\mathbbm{1}} 
\newcommand\coker{\operatorname{coker}}

\newcommand\nr{\mathrm{nr}}

\newcommand\isoarrow{\overset\sim\rightarrow}
\newcommand\GL{\mathrm{GL}}
\newcommand\SL{\mathrm{SL}}

\newcommand\GG{\mathbb{G}}
\newcommand\HOM{\operatorname{\underline{Hom}}}

\newcommand\dd{\mathrm{d}}
\newcommand\et{\mathrm{\acute et}}
\newcommand\ab{\mathrm{ab}}
\newcommand\Gal{\operatorname{Gal}}
\newcommand\llbrack{[\![}
\newcommand\rrbrack{]\!]}
\newcommand\llpara{(\!(}
\newcommand\rrpara{)\!)}
\newcommand\hatotimes{\mathbin{\widehat{\otimes}}}
\newcommand\V{\mathcal V}


\newcommand\ambrit[2]{#1}
\newcommand\sz{\ambrit{z}{s}}
\newcommand\ou{\ambrit{o}{ou}}
\usepackage[T1]{fontenc}
\usepackage{lmodern}

\title{Semisimplicity of the Frobenius action on $\pi_1$}
\author{L.\ Alexander Betts and Daniel Litt}

\begin{document}

\maketitle


\begin{abstract}
We prove that the $\BQ_\ell$-pro-unipotent \'etale fundamental group of a smooth geometrically connected variety over a $p$-adic field satisfies an analogue of the weight--monodromy conjecture, and that the action of a Frobenius is semisimple. We prove these results both in the case $(\ell\neq p)$ and $(\ell=p)$. As a result, we show that certain local Bloch--Kato Selmer schemes appearing in a yet-to-be-formali\sz ed ``Chabauty--Kim method at primes of bad reduction'' are affine spaces.
\end{abstract}

\section{Introduction}

Let $K$ be a finite extension of $\BQ_p$ with residue field of size $q$, and fix a prime $\ell\neq p$. Fix a choice of geometric Frobenius $\varphi_K\in G_K$ and an element $\sigma\in I_K$ of inertia which generates tame inertia. If $X/K$ is a smooth proper variety with good reduction, then the Galois action on the \'etale cohomology $\HH^i_\et(X_{\overline K},\BQ_\ell)$ is unramified, and the eigenvalues of $\varphi_K$ are all $q$-Weil numbers of weight $i$. The strong Tate Conjecture also predicts that the action of $\varphi_K$ should be semisimple.

In general -- $X$ not necessarily smooth, proper, or of good reduction -- the behavi\ou r of the Galois representations $\HH^i_\et(X_{\overline K},\BQ_\ell)$ is less well-understood, and is the subject of several major conjectures.

\begin{conjecture}[Weight--monodromy]\label{conj:w-m}
Let $N$ be the nilpotent endomorphism of $\HH^i_\et(X_{\overline K},\BQ_\ell)$ given by $\frac1e\log(\sigma^e)$ where $\sigma^e$ is any power of $\sigma$ acting unipotently on $\HH^i_\et(X_{\overline K},\BQ_\ell)$. Let
\[
0\leq\W_0\HH^i_\et(X_{\overline K},\BQ_\ell)\leq\W_1\HH^i_\et(X_{\overline K},\BQ_\ell)\leq\dots\leq\W_{2i}\HH^i_\et(X_{\overline K},\BQ_\ell)=\HH^i_\et(X_{\overline K},\BQ_\ell)
\]
be the weight filtration on $\HH^i_\et(X_{\overline K},\BQ_\ell)$ constructed by Deligne \cite[\S6]{deligne:hodge-i}. Then for all $j$, the representation $\gr^\W_j\HH^i_\et(X_{\overline K},\BQ_\ell)$ is \emph{pure} of weight $j$ (see Definition~\ref{def:mixed} for a precise definition)\footnote{There are two competing definitions of the word ``pure'' in the literature: one referring to representations, all of whose Frobenius eigenvalues are Weil numbers of a single weight, and one only requiring this condition on each graded piece of the monodromy filtration. In this paper, we will work exclusively with the latter.}.
\end{conjecture}

\begin{conjecture}[Frobenius-semisimplicity]\label{conj:frob}
$\varphi_K$ acts semisimply on $\HH^i_\et(X_{\overline K},\BQ_\ell)$.
\end{conjecture}

When $X$ is smooth and proper, the first of these conjectures is due to Deligne \cite{deligne-hodge-i,rapoport-zink}; it is a folklore theorem\footnote{A proof of this was outlined to the first author by Tony Scholl; to the best of our knowledge there is no published proof of this fact.} that this implies the general case. The second of these conjectures follows from the strong Tate Conjecture and the Rapoport--Zink spectral sequence when $X$ is smooth and proper with semistable reduction, and de Jong's theory of alterations allows one to extend this to the case of arbitrary reduction. We will shortly explain (Corollary \ref{ref:w-m_implies_frob}) why one should believe Conjecture \ref{conj:frob} in general. There are also analogous conjectures in the case $\ell=p$ regarding the action of the crystalline Frobenius on $\D_\pst(\HH^i_\et(X_{\overline K},\BQ_p))$ (see e.g. \cite[Conjecture 3.27]{mokrane}).

\smallskip

The main aim of this paper is to prove analogues of the above conjectures when the cohomology $\HH^i_\et(X_{\overline K},\BQ_\ell)$ is replaced by the \emph{$\BQ_\ell$-pro-unipotent \'etale fundamental groupoid} $\pi_1^{\BQ_\ell}(X_{\overline K})$ of $X_{\overline K}$. This object, whose formal definition we will recall in \S\ref{s:semisimplicity}, consists of affine $\BQ_\ell$-schemes $\pi_1^{\BQ_\ell}(X_{\overline K};x,y)$ for every $x,y\in X(K)$, endowed with an action of $G_K$. These come endowed with various structure maps; in particular each $\pi_1^{\BQ_\ell}(X_{\overline K};x,x)$ is a pro-unipotent group over $\BQ_\ell$. The Lie algebras $\Lie(\pi_1^{\BQ_\ell}(X_{\overline K};x,x))$ and the affine rings $\OO(\pi_1^{\BQ_\ell}(X_{\overline K};x,y))$ admit natural weight filtrations compatible with their induced Galois actions. We will prove the following two results, along with their analogues in the case $\ell=p$.

\begin{theorem}[Weight--monodromy for the fundamental groupoid]\label{thm:w-m}
Suppose that $X/K$ is smooth and geometrically connected. Then:
\begin{enumerate}
	\item $\gr^\W_{-n}\Lie(\pi_1^{\BQ_\ell}(X_{\overline K};x,x))$ is pure of weight $-n$, for all $n$ and all $x\in X(K)$; and
	\item $\gr^\W_n\OO(\pi_1^{\BQ_\ell}(X_{\overline K};x,y))$ is pure of weight $n$, for all $n$ and all $x,y\in X(K)$.
\end{enumerate}
\end{theorem}

\begin{theorem}[Frobenius-semisimplicity for the fundamental groupoid]\label{thm:frob}
Suppose that $X/K$ is smooth and geometrically connected. Then $\varphi_K$ acts semisimply on $\Lie(\pi_1^{\BQ_\ell}(X_{\overline K};x,x))$ and $\OO(\pi_1^{\BQ_\ell}(X_{\overline K};x,y))$ for all $x,y\in X(K)$.
\end{theorem}

The crystalline version of Theorem~\ref{thm:w-m} already appears in work of Vologodsky \cite[Theorem~26]{vologodsky}, to whom this work owes a great debt; we present two proofs, both different to Vologodsky's and more direct.

\begin{remark}
In \cite[Conjecture 3.10]{chiarellotto-lazda:l-independence}, the authors conjecture a number of independence of $\ell$ results for Weil--Deligne representations associated to unipotent fundamental groups. Our Theorem \ref{thm:frob} above implies that the ``weak'' and ``strong'' versions of these conjectures are equivalent. In particular, Chiarellotto--Lazda prove the weak forms of their conjectures for smooth projective curves over mixed characteristic local fields, which, by Theorem \ref{thm:frob}, implies the strong form of their conjectures. 
\end{remark}

\subsection{Canonical splittings}

The main technical result in this paper is a pure linear algebra lemma (Definition \ref{def:canonical_splitting}), which shows that weight--monodromy conditions can be used to overcome ``extension problems'' when studying Frobenius actions. If $(V,\W_\bullet)$ is a filtered representation of $G_K$, then it is not in general true that semisimplicity of the Frobenius action on the graded pieces $\gr^\W_nV$ implies semisimplicity of the action on $V$ itself. However, if each $\gr^\W_nV$ is pure of weight $n$, we will show that in fact there is a canonical $\varphi_K$-equivariant way to split the $\W_\bullet$-filtration on $V$. In particular, in this case Frobenius-semisimplicity of each $\gr^\W_nV$ does imply Frobenius-semisimplicity of $V$.

As well as proving our Frobenius-semisimplicity Theorem \ref{thm:frob}, this also proves the following relation between the weight--monodromy conjecture and the Frobenius-semisimplicity conjecture.

\begin{corollary}\label{ref:w-m_implies_frob}
Suppose that the weight--monodromy conjecture \ref{conj:w-m} holds for all varieties over $K$, and that the Frobenius-semisimplicity conjecture \ref{conj:frob} holds for all smooth proper varieties over $K$. Then the Frobenius-semisimplicity conjecture \ref{conj:frob} holds for all varieties over $K$.
\begin{proof}
If $X/K$ is a variety, then each $\gr^\W_j\HH^i_\et(X_{\overline K},\BQ_\ell)$ is a subquotient of the degree $j$ \'etale cohomology of a smooth proper variety, and hence Frobenius-semisimple. The canonical $\varphi_K$-equivariant splitting provided by the weight--monodromy conjecture for $X$ implies that $\HH^i_\et(X_{\overline K},\BQ_\ell)$ is Frobenius-semisimple.
\end{proof}
\end{corollary}

The canonical splittings we construct also have further consequences for the fundamental groupoids of smooth geometrically connected varieties, in that they provide, for each $x,y\in X(K)$, a canonical $\varphi_K$-invariant path $p_{x,y}\in\pi_1^{\BQ_\ell}(X_{\overline K};x,y)(\BQ_\ell)^{\varphi_K}$. The analogues of these paths in the case $\ell=p$ already appear in the work of Vologodsky, and in the good reduction case they play a central role in the theory of iterated Coleman integration \cite{besser:coleman}; in the case $\ell\neq p$ these paths were used in \cite{alex-netan} to explicitly calculate the non-abelian Kummer map associated to a smooth hyperbolic curve $X$. Our construction of these canonical paths provides a unified explanation for these phenomena.

We remark that in the archimedean setting, i.e.\ in the category of mixed Hodge structures, the existence of similar canonical splittings is well-known. The weight filtration on a mixed Hodge structure is canonically split over~$\BC$ by the Deligne splitting, and there is even a variant of this splitting which is defined over~$\RR$ \cite[Proposition~2.20]{cattani-kaplan-schmid}. Thus there are also canonical choices of $\RR$-pro-unipotent Betti paths between any two points in a smooth connected variety $X/\BC$. The study of these paths will be taken up by the second author in forthcoming work.

\subsubsection{Applications}

As a further application of our main theorems, we apply these results in Section \ref{s:geometry} to study the geometry of local Bloch--Kato Selmer varieties arising in the Chabauty-Kim program. These are three sub-presheaves $\HH^1_e(G_K,U)\subseteq\HH^1_f(G_K,U)\subseteq\HH^1_g(G_K,U)$ of the continuous Galois cohomology presheaf $\HH^1(G_K,U)$ associated to the $\BQ_p$-pro-unipotent \'etale fundamental group $U=\pi_1^{\BQ_p}(X_{\overline K},\bar x)$, and are all representable by affine schemes over $\BQ_p$ \cite[Proposition~3]{minhyong:siegel}\cite[Lemma~5]{minhyong:selmer}. The relevance of the local Selmer varieties to Diophantine geometry and the Chabauty--Kim method comes via a certain \emph{non-abelian Kummer} or \emph{higher Albanese} map
\[
X(K) \rightarrow \HH^1(G_K,U)(\BQ_p)\,.
\]
When $X$ is a smooth projective curve with good reduction, then the image of this map is contained in $\HH^1_e(G_K,U)=\HH^1_f(G_K,U)$, which is known to be an affine space over $\BQ_p$ \cite[Proposition~1.4]{kim:tangential}. In the absence of properness or good reduction assumptions, the image is contained in $\HH^1_g(G_K,U)$, but not in general in $\HH^1_f$.

Our contribution in this paper is to describe the geometry of $\HH^1_g(G_K,U)$ (for $X$ not necessarily proper, of arbitrary dimension, and with no restrictions on the reduction type). In fact, we find that its geometry is as good as could be hoped: it is \emph{canonically} isomorphic to the product of $\HH^1_e(G_K,U)$ and an explicit (pro-finite-dimensional) vector space $\V(\D_\pst(U))^{\varphi=1}$, and therefore also an affine space. As an illustration, we are able to write down explicit dimension formulae in the case that $X$ is a curve.


\newcommand\y{y}
\newcommand\G{\mathcal G}
\newcommand\pureN{{\widetilde N}}

\section{Weil--Deligne representations}\label{section:WD-reps}

In this section, we fix a finite extension $K$ of $\BQ_p$ with residue field $k$, along with an algebraic closure $\Kbar$. We write $\Weil_K$ for the Weil group of $K$, i.e.\ the subgroup of the absolute Galois group $G_K$ consisting of elements acting on the residue field $\kbar$ via an integer power of the absolute Frobenius $\sigma\colon x\mapsto x^p$, and we write $v\colon\Weil_K\rightarrow\ZZ$ for the unique homomorphism such that $w\in\Weil_K$ acts on $\kbar$ via $\sigma^{v(w)}$. We fix a geometric Frobenius $\varphi_K\in\Weil_K$, i.e.\ an element such that $v(\varphi_K)=-f(K/\BQ_p)$. The following definition is standard.

\begin{defn}
Let $\Fd$ be a field of characteristic $0$. A \emph{Weil representation} with coefficients in $\Fd$ is a representation $\rho\colon \Weil_K\rightarrow\Aut(V)$ of $\Weil_K$ on a finite dimensional $\Fd$-vector space $V$ on which the inertia group $I_K$ acts through a finite quotient. A \emph{Weil--Deligne representation} with coefficients in $\Fd$ consists of a Weil representation $V$ endowed with an $\Fd$-linear endomorphism $N\in\End(V)$, called the \emph{monodromy operator}, such that
\[
\rho(w)\circ N\circ\rho(w)^{-1} = p^{v(w)}\cdot N
\]
for all $w\in\Weil_K$. It follows from this condition that $N$ is necessarily nilpotent.

We denote the category of Weil--Deligne representations by $\Rep_\Fd(\WeilD_K)$. The category $\Rep_\Fd(\WeilD_K)$ has a canonical tensor product making it into a neutral Tannakian category, where the tensor $V_1\otimes V_2$ is endowed with the tensor product $\Weil_K$-action, and with the endomorphism $N_{V_1}\otimes1+1\otimes N_{V_2}$.
\end{defn}

\begin{example}
The Weil--Deligne representation $\Fd(1)$ has underlying vector space $\Fd$, trivial monodromy operator $N$, and the Weil group acts via $w\colon x\mapsto p^{v(w)}x$.
\end{example}

As explained in \cite{fontaine:pot_semi-stables}, Weil--Deligne representations arise naturally from Galois representations, both $\ell$-adic and $p$-adic.

\begin{example}\label{ex:l_not_p}
Let $\ell$ be a prime distinct from $p$, and choose a generator $t\in\BQ_\ell(1)$. Let $t_\ell\colon I_K\rightarrow\BQ_\ell(1)$ denote the $\ell$-adic tame character $w\mapsto\left(\frac{w(p^{1/\ell^n})}{p^{1/\ell^n}}\right)_{n\in\NN}$. Then there is a fully faithful exact $\otimes$-functor
\[
\Rep_{\BQ_\ell,\cts}(G_K)\rightarrow\Rep_{\BQ_\ell}(\WeilD_K)
\]
from the category of continuous $\BQ_\ell$-linear\footnote{One can equally well work with continuous $\Fd$-linear representations for any algebraic extension $\Fd/\BQ_\ell$. However, in our context only $\BQ_\ell$-linear representations will appear, so we restrict to this case.} representations of $G_K$ to the category of Weil--Deligne representations. This functor is defined as follows. If $(V,\rho_0)$ is a continuous $\BQ_\ell$-linear representation of $G_K$, there is an open subgroup $I_L\leq I_K$ acting unipotently on $V$ by Grothendieck's $\ell$-adic Monodromy Theorem. We let $N$ denote the endomorphism of $V$ such that
\[
\rho_0(g) = \exp\left(t^{-1}t_\ell(g)N\right)
\]
for all $g\in I_L$. We define an action $\rho$ of $\Weil_K$ on $V$ by
\[
\rho\left(\varphi_K^ng\right)=\rho_0(\varphi_K^ng)\exp\left(-t^{-1}t_\ell(g)N\right)
\]
for $n\in\ZZ$ and $g\in I_K$. The tuple $(V,\rho,N)$ is the Weil--Deligne representation associated to $V$.

There is an alternative construction of the Weil--Deligne representation due to Fontaine which avoids the choice of Frobenius $\varphi_K$ \cite[\S2.2]{fontaine:pot_semi-stables}. If we write $B$ for the $\BQ_\ell$-linear Tate module of the Tate elliptic curve $\GG_m/p^\ZZ$, then $B(-1)$ is an extension of $\BQ_\ell(-1)$ by $\BQ_\ell$ and hence we may form the direct limit $\B_{\st,\ell}:=\varinjlim\Sym^n(B(-1))$, where the transition maps $\Sym^n(B(-1))\hookrightarrow\Sym^{n+1}(B(-1))$ are induced by the inclusion $\BQ_\ell\hookrightarrow B(-1)$.There is a canonical $\BQ_\ell$-algebra structure on $\B_{\st,\ell}$, namely the polynomial algebra $\BQ_\ell[x]$ where $x\in B(-1)$ is any lift of $t^{-1}\in\BQ_\ell(-1)$, as well as a nilpotent derivation $N=-\frac\dd{\dd x}$. For a continuous $\ell$-adic representation $V$ of $G_K$, one sets
\[
\D_\pst(V):=\varinjlim\nolimits_{L/K}(\B_{\st,\ell}\otimes V)^{I_L},
\]
which is a Weil--Deligne representation with respect to the natural action of $\Weil_K$ and the induced monodromy operator $N$. This construction yields an isomorphic Weil--Deligne representation to that constructed earlier, depending only on the choice of $t$. Specifically, if we choose $x$ to be an eigenvector for the chosen Frobenius $\varphi_K$ (of eigenvalue $q$), then the map $\B_{\st,\ell}\twoheadrightarrow\BQ_\ell$ sending $x$ to $0$ induces an isomorphism $\D_\pst(V)\isoarrow V$ of Weil--Deligne representations. 
\end{example}

\begin{example}\label{ex:l_is_p}
Let $L/K$ be a Galois extension, not necessarily finite, and let~$L_0$ denote the maximal subfield of~$L$ which is unramified over~$\BQ_p$. A \textit{discrete $(\varphi,N,G_{L|K})$-module} \cite[\S4.2.1]{fontaine:semi-stables}  consists of a finite-dimensional $L_0$-vector space $V$ endowed with a $\sigma$-linear Frobenius automorphism $\varphi\colon V\rightarrow V$, an $L_0$-linear endomorphism $N\colon V\rightarrow V$, and a semilinear action $\rho_0\colon G_{L|K}\rightarrow\Aut_{\BQ_p}(V)$ such that:
\begin{itemize}
	\item the action of $G_{L|K}$ has open point-stabilisers and commutes with $\varphi$ and $N$; and
	\item we have $N\circ\varphi = p\cdot\varphi\circ N$.
\end{itemize}
This definition comes from the study of potentially semistable representations in abstract $p$-adic Hodge theory.

Given a discrete $(\varphi,N,G_{L|K})$-module $(V,\varphi,N,\rho_0)$, we obtain an $L_0$-linear Weil--Deligne representation whose underlying vector space and monodromy operator are $V$ and $N$, and whose representation of $\Weil_K$ is given by
\[
\rho(w) = \rho_0(w)\varphi^{-v(w)}.
\]
We thus obtain a faithful, exact and conservative $\otimes$-functor $\Mod(\varphi,N,G_{L|K})\rightarrow\Rep_{L_0}(\WeilD_K)$ from the category of discrete $(\varphi,N,G_{L|K})$-modules to the category of $L_0$-linear Weil--Deligne representations. Precomposing with the Dieudonn\'e functor $\D_{\st,L}\colon\Rep_{\BQ_p,\dR}(G_K)\rightarrow\Mod(\varphi,N,G_{L|K})$, we also obtain an exact $\otimes$-functor from the category of de Rham (=potentially semistable \cite[Th\'eor\`eme~0.7]{berger}) representations to the category of $L_0$-linear Weil--Deligne representations. In the particular case that $L=K$ or $L=\overline K$, we denote the functor $\D_{\st,L}$ simply by $\D_\st$ or $\D_\pst$ respectively; the latter functor is faithful and conservative.
\end{example}



If $P$ is a property of (filtered) Weil--Deligne representations and $\ell$ is a prime, we shall say that a $\BQ_\ell$-linear (filtered) Galois representation $V$ has property $P$ just when its associated Weil--Deligne representation has property $P$; when $\ell=p$ this means we assume that $V$ is de Rham. For example, we say that a Weil--Deligne representation is \textit{semistable} \cite[\S1.3.7]{fontaine:pot_semi-stables} just when the action of $I_K$ is trivial. This corresponds to the usual notions of semistability on $\BQ_\ell$-linear representations, namely unipotence of the $I_K$-action when $\ell\neq p$ and $\B_\st$-admissibility when $\ell=p$. The following two properties --- being \textit{Frobenius-semisimple} and \textit{mixed} --- play a central role in this paper.

\begin{defn}
An $\Fd$-linear Weil--Deligne representation $V$ is said to be \textit{Frobenius-semisimple} just when the action of the geometric Frobenius $\varphi_K$ on $V$ is semisimple, or equivalently just when every element of $\Weil_K$ acts semisimply. For a de Rham $\BQ_p$-linear representation $V$ of $G_K$, this is the same as the action of crystalline Frobenius $\varphi$ on $\D_\pst(V)$ being semisimple (as a $\BQ_p$-linear automorphism).
\end{defn}

\begin{defn}\label{def:mixed}
A \emph{$q$-Weil number} of weight $i$ in an algebraically closed field $\Fdbar$ of characteristic $0$ is an element $\alpha\in\Fdbar$ which is algebraic over $\BQ\subseteq\Fdbar$ and satisfies
\[
|\iota(\alpha)| = q^{i/2}
\]
for every complex embedding $\iota\colon\BQbar\hookrightarrow\CC$.

Given a Weil--Deligne representation $V$ over a characteristic $0$ field $\Fd$, we write $V_{\Fdbar}^i$ for the largest $\varphi_K$-stable subspace of $V_{\Fdbar}$ such that all the generalized eigenvalues of $\varphi_K|_{V_{\Fdbar}^i}$ are $q$-Weil numbers of weight $i$, where~$q$ is the size of the residue field of~$K$. By Galois descent, $V_{\Fdbar}^i$ is the base change of a subspace $V^i$ defined over $\Fd$. It follows from the definition that $N(V^i)\subseteq V^{i-2}$.

We say that a Weil--Deligne representation $V$ is \textit{pure of weight $i$} just when $V=\bigoplus_jV^j$ (i.e.\ all the eigenvalues of $\varphi_K$ are $q$-Weil numbers) and the map $N^j\colon V^{i+j}\isoarrow V^{i-j}$ is an isomorphism for all $j\geq0$. We say that a Weil--Deligne representation $V$ endowed with an increasing filtration
\[
\dots\subseteq\W_i V\subseteq\W_{i+1}V\subseteq\dots
\]
by Weil--Deligne subrepresentations is \emph{mixed} just when $\W_\bullet$ is exhaustive and separated and $\gr^\W_iV$ is pure of weight $i$ for all $i$. The filtration $\W_\bullet V$ is called the \emph{weight filtration} of a mixed Weil--Deligne representation $V$; its set of \emph{weights} is the set
\[
\wt(V) := \{i\in\ZZ\::\:\gr^\W_iV\neq0\}\,.
\]

The collection of mixed Weil--Deligne representations naturally forms a symmetric monoidal category $\Rep_\Fd^\mix(\WeilD_K)$, whose morphisms are filtered maps of Weil--Deligne representations, and whose tensor product $\otimes$ and tensor unit $\1$ are defined in the usual way.

A pure Weil--Deligne representation~$V$ of weight~$i$ can be viewed as a mixed Weil--Deligne representation by endowing it with the filtration where $\W_jV=V$ for~$j\geq i$ and $\W_jV=0$ for~$j<i$.
\end{defn}


\begin{remark}
The subspaces $V^i\subseteq V$ defined in Definition~\ref{def:mixed} are stable under the action of $\Weil_K$ and do not depend on the choice of geometric Frobenius $\varphi_K$. Indeed, for any other geometric Frobenius $\varphi_K'$ there is an $n\in\NN$ such that $\rho(\varphi_K)^n=\rho(\varphi_K')^n$, and we can equivalently describe $V_{\Fdbar}^i$ as the largest subspace of $V_{\Fdbar}$ on which all the generalized eigenvalues of $\rho(\varphi_K)^n$ are $q$-Weil numbers of weight $ni$.
\end{remark}

\begin{remark}
Let $V$ be a filtered de Rham representation of $G_K$ on a finite dimensional $\BQ_p$-vector space, and suppose that $V$ is mixed, which for us means that the $\BQ_p^\nr$-linear Weil--Deligne representation associated to $\D_\pst(V)$ is mixed. Then the $K_0$-linear Weil--Deligne representation associated to $\D_\st(V)$ is also mixed. Indeed, $\BQ_p^\nr\otimes_{K_0}\D_\st(V)$ is the inertia-invariant subspace of the Weil--Deligne representation associated to $\D_\pst(V)$, and hence is mixed since taking invariants under actions of finite groups is exact in characteristic $0$ vector spaces.
\end{remark}

\begin{remark}
In what follows, our Weil--Deligne representations will not necessarily be finite-dimensional, and will sometimes be either ind-finite-dimensional or pro-finite-dimensional (i.e.\ a direct limit or inverse limit of finite-dimensional Weil--Deligne representations, respectively). With suitable adaptations all of the definitions, constructions and results of this section also apply to ind-finite-dimensional and pro-finite-dimensional representations (by functoriality). In fact, all the results of this section except Theorem~\ref{thm:tannakian} are stated so as to be true verbatim for ind-finite-dimensional Weil--Deligne representations, and the corresponding statements in the pro-finite-dimensional case are just the duals.
\end{remark}

The following result is well-known.

\begin{theorem}[cf.\ {\cite[Proposition~20]{vologodsky}}]\label{thm:tannakian}
The category $\Rep_\Fd^\mix(\WeilD_K)$ is a neutral Tannakian category over~$\Fd$, and the forgetful functor $\Rep_\Fd^\mix(\WeilD_K)\rightarrow\Rep_\Fd(\WeilD_K)$ is exact, conservative and compatible with the tensor structure. Morphisms in $\Rep_\Fd^\mix(\WeilD_K)$ are strict for the weight filtration.
\begin{proof}[Proof (sketch)]
Compatibility with the tensor structure is easy to check, and conservativity will be a consequence of exactness, since the forgetful functor reflects zero objects. For the remainder, is suffices to prove that any morphism
\[
f\colon V_1\rightarrow V_0
\]
of mixed representations is strict and that its kernel and cokernel are again mixed when endowed with the subspace and quotient filtrations, respectively.

We begin by proving this in the case that $V_0$ and $V_1$ are pure of weights $i_0$ and $i_1$, respectively, viewed as mixed Weil--Deligne representations as described in Definition~\ref{def:mixed}. In this case, the claim amounts to showing that $f=0$ if $i_0\neq i_1$, and that $\ker(f)$ and $\coker(f)$ are pure of weight $i_0=i_1$ otherwise. If $i_1<i_0$, then $f$ must carry $\W_{i_1}V_1=V_1$ into $\W_{i_1}V_0=0$, so $f=0$ in this case. We suppose henceforth that $i_1\geq i_0$.

Now the functor $V\mapsto V^j$ picking out the weight $j$ generalized eigenspace is exact for all $j$, and hence for all $j\geq0$ we have a commuting diagram
\begin{center}
\begin{tikzcd}
0 \arrow{r} & \ker(f)^{i_1+j} \arrow{r}\arrow{d}{N^j} & V_1^{i_1+j} \arrow{r}\arrow{d}{N^j}[swap]{\wr} & V_0^{i_1+j} \arrow[hook]{d}{N^j} \\
0 \arrow{r} & \ker(f)^{i_1-j} \arrow{r} & V_1^{i_1-j} \arrow{r} & V_0^{i_1-j}
\end{tikzcd}
\end{center}
with exact rows. The middle and right-hand vertical maps are an isomorphism and injective, respectively, by purity of $V_0$ and $V_1$, and hence $N^j\colon\ker(f)^{i_1+j}\isoarrow\ker(f)^{i_1-j}$ is an isomorphism. Thus $\ker(f)$ is pure of weight $i_1$; the dual argument establishes that $\coker(f)$ is pure of weight $i_0$ and we are done in the case $i_0=i_1$.

Finally, in the case $i_1>i_0$, we see from the equal-weight case that the image of $f$ is pure of weight $i_0$, while its coimage is pure of weight $i_1$. But these have the same underlying representation, which is only possible if this is zero (e.g.\ since the weight of a non-zero pure representation is the average weight of its generalized $\varphi_K^{-1}$-eigenvalues). Hence $f=0$ in this case too.

\smallskip

Now we deal with the general case. We view $V_1 \overset f\rightarrow V_0$ as a filtered chain complex in the category of Weil--Deligne representations, with $V_0$ in degree $0$. The associated (homological) spectral sequence \cite[Theorem XI.3.1]{maclane:homology} has first page given by
\[
E^1_{i,j} = 
	\begin{cases}
	\coker\left(\gr^\W_if\right) & \text{if $i+j=0$,} \\
	\ker\left(\gr^\W_if\right) & \text{if $i+j=1$,} \\
	0 & \text{else,}
	\end{cases}
\]
and degenerates to
\[
E^\infty_{i,j} = 
	\begin{cases}
	\gr^\W_i\left(\coker(f)\right) & \text{if $i+j=0$,} \\
	\gr^\W_i\left(\ker(f)\right) & \text{if $i+j=1$,} \\
	0 & \text{else.}
	\end{cases}
\]

The differentials on the first page all vanish, since they are morphisms of pure Weil--Deligne representations whose domain has strictly higher weight than the codomain. The same argument establishes that all differentials on higher pages also vanish, and hence we have $E^1_{i,j}=E^\infty_{i,j}$. In particular, $\gr^\W_i(\ker(f))$ and $\gr^\W_i(\coker(f))$ are both pure of weight $i$ for all $i$, so that $\ker(f)$ and $\coker(f)$ are mixed. Strictness of $f$ also follows from degeneration at the first page, since this ensures that the natural maps $\gr^\W_\bullet\ker(f)\rightarrow\ker\left(\gr^\W_\bullet f\right)$ and $\gr^\W_\bullet\coker(f)\rightarrow\coker\left(\gr^\W_\bullet f\right)$  are isomorphisms.
\end{proof}
\end{theorem}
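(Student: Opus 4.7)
My plan is to reduce the theorem to showing that the forgetful functor $\Rep_\Fd^\mix(\WeilD_K)\to\Rep_\Fd(\WeilD_K)$ exhibits $\Rep_\Fd^\mix(\WeilD_K)$ as an abelian subcategory: concretely, that for any morphism $f\colon V_1\to V_0$ of mixed representations, the kernel and cokernel (formed in $\Rep_\Fd(\WeilD_K)$ and endowed with the subspace and quotient weight filtrations) are mixed, and that $f$ is strict. Once this is established, exactness and conservativity of the forgetful functor are automatic. Compatibility with the tensor structure is easy: the convolution filtration $\W_n(V_1\otimes V_2)=\sum_{i+j=n}\W_iV_1\otimes\W_jV_2$ has associated graded a sum of tensor products of pure representations, and purity is preserved by tensor products by multiplicativity of $q$-Weil weights together with a check on the $N$-isomorphism condition. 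Neutral Tannakianness then follows from abelianness and the tensor properties together with the obvious fiber functor to $\Fd$-vector spaces, after verifying also that duals of mixed representations are mixed with the filtration $\W_nV^\vee=(\W_{-n-1}V)^\perp$.

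The heart of the argument is the pure case. Suppose $V_1$ and $V_0$ are pure of weights $i_1$ and $i_0$ and $f\colon V_1\to V_0$ is a morphism of filtered Weil--Deligne representations. If $i_1<i_0$, the filtration condition immediately forces $f=0$. Suppose instead $i_1\geq i_0$. I would apply the exact functor $V\mapsto V_{\Fdbar}^j$ extracting the generalised $\varphi_K$-eigenspaces whose eigenvalues are $q$-Weil numbers of weight $j$ (exactness being a consequence of the generalised-eigenspace decomposition of $\varphi_K$) and chase the induced map of short exact sequences. Purity of $V_1$ forces $N^j$ to be an isomorphism on $V_{1,\Fdbar}^{i_1+j}$, and purity of $V_0$ forces $N^j$ to be injective on $V_{0,\Fdbar}^{i_1+j}$, so a diagram chase yields that $N^j\colon\ker(f)_{\Fdbar}^{i_1+j}\isoarrow\ker(f)_{\Fdbar}^{i_1-j}$ is an isomorphism; thus $\ker(f)$ is pure of weight $i_1$. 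Dually, $\coker(f)$ is pure of weight $i_0$. In the case $i_1>i_0$, the same analysis applied to $V_0\twoheadrightarrow\coker(f)$ and to $\ker(f)\hookrightarrow V_1$ shows that the image of $f$ is pure of weight $i_0$ and the coimage is pure of weight $i_1$; since image and coimage share the same underlying Weil--Deligne representation, and any non-zero pure representation has a uniquely determined weight (the average weight of its generalised $\varphi_K$-eigenvalues), we conclude $f=0$.

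For the general case of a morphism $f\colon V_1\to V_0$ of mixed representations, I would view $f$ as a filtered chain complex concentrated in two degrees and run the associated spectral sequence. Its $E^1$-page has entries $\ker(\gr^\W_i f)$ and $\coker(\gr^\W_i f)$, all pure of weight $i$ by the pure case, and every differential on every page is a morphism between pure representations with codomain of strictly smaller weight, hence zero. Degeneration at $E^1$ yields simultaneously that $\gr^\W_i\ker(f)$ and $\gr^\W_i\coker(f)$ are pure of weight $i$ (so that $\ker(f)$ and $\coker(f)$ are mixed) and that the natural maps $\gr^\W_\bullet\ker(f)\to\ker(\gr^\W_\bullet f)$ and $\gr^\W_\bullet\coker(f)\to\coker(\gr^\W_\bullet f)$ are isomorphisms, giving strictness.

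The main obstacle is the subcase $i_1>i_0$ of the pure case, where vanishing cannot be extracted from the filtration condition (which is vacuous in this range) and instead requires global information: one first establishes purity of $\ker(f)$, $\coker(f)$, image, and coimage via the generalised-eigenspace functor, and then exploits the incompatibility of a non-zero Weil--Deligne representation being pure of two distinct weights. Once this subtlety is resolved, the rest of the argument is essentially formal spectral-sequence bookkeeping.
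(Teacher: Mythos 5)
Your proposal follows essentially the same route as the paper's proof: reduce to showing kernels and cokernels are mixed and morphisms are strict, handle the pure case via the exact generalised-eigenspace functor $V\mapsto V^j$ and the resulting $N^j$ diagram chase (plus the weight-average argument for the $i_1>i_0$ subcase), and then conclude in general via degeneration of the spectral sequence of the two-term filtered complex. The argument is correct and matches the paper's in structure and key steps.
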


\begin{prop}[cf.\ {\cite[Lemma~21]{vologodsky}}]\label{mixed-extn}
In a $\W$-strict short exact sequence
\[
0\rightarrow V_1\rightarrow V\rightarrow V_2\rightarrow0
\]
of filtered Weil--Deligne representations, if $V_1$ and $V_2$ are mixed, so too is $V$.
\begin{proof}
Taking $\W$-graded pieces, it suffices to prove that any extension of Weil--Deligne representations $V_1$, $V_2$ which are both pure of weight $i$ is again of weight $i$. The induced sequence
\[
0\rightarrow V_1^j\rightarrow V^j\rightarrow V_2^j\rightarrow0
\]
between the weight $j$ generalized Frobenius eigenspaces is exact for each $j$, and hence we are done by the five-lemma applied to $N^j$.
\end{proof}
\end{prop}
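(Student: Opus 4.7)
The plan is to reduce the claim to the pure case and then use exactness of the weight-generalised-eigenspace functor, together with a single application of the five lemma. Since the given short exact sequence is $\W$-strict, applying $\gr^\W_j$ yields a short exact sequence $0\to\gr^\W_j V_1\to \gr^\W_j V\to \gr^\W_j V_2\to 0$ of Weil--Deligne representations for every $j$. To show $V$ is mixed, it therefore suffices to prove the following: in a short exact sequence $0\to V_1\to V\to V_2\to 0$ of Weil--Deligne representations with $V_1$ and $V_2$ both pure of weight $i$, the middle term $V$ is pure of weight $i$.

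For this reduced statement, the key input is the exactness of the functor $V\mapsto V^j$ (used implicitly in the proof of Theorem~\ref{thm:tannakian}); this follows from the fact that $V^j$ is cut out by a Zariski-closed-then-open condition on generalised eigenvalues of $\varphi_K$ and is therefore computed by a polynomial projection that commutes with morphisms. Applying this termwise gives short exact sequences $0\to V_1^j\to V^j\to V_2^j\to 0$ for each $j$, which in particular implies $V=\bigoplus_j V^j$ since the analogous decompositions hold for $V_1$ and $V_2$ by purity.

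It remains to verify the monodromy condition. For $j\geq 0$, I would apply the five lemma to the commutative diagram with exact rows
\[
\begin{tikzcd}
0 \arrow{r} & V_1^{i+j} \arrow{r}\arrow{d}{N^j}[swap]{\wr} & V^{i+j} \arrow{r}\arrow{d}{N^j} & V_2^{i+j} \arrow{r}\arrow{d}{N^j}[swap]{\wr} & 0 \\
0 \arrow{r} & V_1^{i-j} \arrow{r} & V^{i-j} \arrow{r} & V_2^{i-j} \arrow{r} & 0.
\end{tikzcd}
\]
Purity of $V_1$ and $V_2$ forces the outer vertical arrows to be isomorphisms, whence so is the middle one, giving the required monodromy isomorphism for $V$.

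There is no substantial obstacle here: once the reduction to the pure case is in place, the argument is essentially a diagram chase. The only subtlety worth flagging is conceptual rather than technical --- pure objects in this sense may carry nontrivial monodromy, so checking $V=\bigoplus_j V^j$ alone does not suffice, and the five-lemma step on $N^j$ is genuinely needed.
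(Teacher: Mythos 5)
Your proof is correct and follows essentially the same route as the paper's: reduce to the pure case by taking $\W$-graded pieces, use exactness of $V\mapsto V^j$ to obtain short exact sequences of generalised eigenspaces, and apply the five lemma to $N^j\colon V^{i+j}\to V^{i-j}$. The paper's argument is just a more compressed version of what you wrote.
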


\subsection{The canonical splitting}

In what follows, we will need several basic facts about the structure theory of mixed Weil--Deligne representations, most notably that their weight filtrations have \emph{canonical} splittings compatible with their Weil group actions (which is all we will need in \S\ref{s:semisimplicity}). This will be an immediate consequence of the following lemma describing to what extent one can lift maps between associated gradeds of mixed Weil--Deligne representations.

\begin{lemma}\label{lem:weak_morphisms}
Let $V_1$ and $V_2$ be mixed Weil--Deligne representations, and let $\gr^\W_\bullet f\colon\gr^\W_\bullet V_1\rightarrow\gr^\W_\bullet V_2$ be a morphism of graded Weil--Deligne representations. Then there exists a unique linear map $f\colon V_1\rightarrow V_2$ satisfying the following properties:
\begin{enumerate}
	\item\label{condn:weak1} $f$ is $\Weil_K$-equivariant and preserves the $\W$-filtration;
	\item the associated $\W$-graded of $f$ is the map $\gr^\W_\bullet f$; and
	\item\label{condn:weak2} for every $r>0$, the map
	\[
	\sum_{s=0}^r{r\choose s}(-1)^sN_{V_2}^{r-s}\circ f\circ N_{V_1}^s
	\]
	is $\W$-filtered of degree $-r-1$, i.e.\ takes $\W_iV_1$ into $\W_{i-r-1}V_2$ for every $i$.
\end{enumerate}
Moreover, the assignment $\gr^\W_\bullet f\mapsto f$ is linear, and compatible with composition and tensor products.
\begin{proof}
Let us say that a linear map $f\colon V_1\rightarrow V_2$ is a \emph{weak morphism} just when it satisfies conditions~\eqref{condn:weak1} and~\eqref{condn:weak2} above. In other words, a weak morphism is an element $f\in\W_0\HOM(V_1,V_2)^{\Weil_K}$ such that $N^r(f)\in\W_{-r-1}\HOM(V_1,V_2)$ for all $r>0$, where $N$ denotes the monodromy operator on $\HOM(V_1,V_2)=V_1^\dual\otimes V_2$. It follows from this description that composites and tensor products of weak morphisms are weak morphisms, so it suffices to prove that every morphism $\gr^\W_\bullet f\colon \gr^\W_\bullet V_1\rightarrow\gr^\W_\bullet V_2$ is induced by a unique weak morphism $f\colon V_1\rightarrow V_2$.

To prove this, it suffices to prove that for every mixed Weil--Deligne representation $V$ and every element $\overline f\in\gr^\W_0V^{\Weil_K,N=0}$, there is a unique $f\in\W_0V^{\Weil_K}$ lifting $\overline f$ such that $N^r(f)\in\W_{-r-1}V$ for all $r>0$; applying this to $V=\HOM(V_1,V_2)$ yields the desired result. Let $-i$ denote the lowest weight of $V$ --- if $i\leq0$ then all the weights of $V$ are non-negative and the result is trivial. In general, we proceed by induction on $i$, and write $V$ as an extension
\[
0\rightarrow \gr^\W_{-i}V\rightarrow V\rightarrow\widetilde V\rightarrow0
\]
where the weights of $\widetilde V$ are all $>-i$. It follows from the inductive hypothesis that $\overline f\in\gr^\W_0(V)=\gr^\W_0(\widetilde V)$ has a unique lift to an element $\widetilde f\in\W_0\widetilde V^{\Weil_K}$ such that $N^r(\widetilde f)\in\W_{-r-1}\widetilde V$ for all $r>0$. Since $\widetilde f$ is $\Weil_K$-fixed, it lies in $\widetilde V^0$, so we may further lift~$\widetilde f$ to some $f^0\in V^0$.

Since~$f^0$ is a lift of~$\widetilde f$, we have that $f^0\in\W_0V$ and that $N^r(f^0)\in\W_{-r-1}V$ for all~$r<i$. We also have $N^i(f^0)\in\gr^\W_{-i}V$: if~$i=1$ this follows since $\tilde f=\overline f$ lies in the kernel of $N$ on $\tilde V$ by assumption, while if~$i>1$ this follows since $N^{i-1}(f^0)\in\gr^\W_{-i}V$ already. Since~$f^0\in V^0$, we have $N^i(f^0)\in\gr^\W_{-i}V^{-2i}$, and so by purity there is a unique $f^1\in\gr^\W_{-i}V^0$ such that~$N^i(f^1)=N^i(f^0)$. It follows that~$f:=f^0-f^1$ is the unique element of~$\W_0V^0$ which maps to~$\overline f$ and satisfies $N^r(f)\in\W_{-r-1}V$ for all~$r>0$. Unicity implies that~$f$ is also $\Weil_K$-fixed, so the lemma is proved.
\end{proof}
\end{lemma}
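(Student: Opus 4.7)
The plan is to reframe the problem using the internal Hom and then prove the resulting existence/uniqueness statement by induction on the lowest weight. Identifying a linear map $f\colon V_1\to V_2$ with an element of $\HOM(V_1,V_2)=V_1^\dual\otimes V_2$ (viewed as a mixed Weil--Deligne representation), the induced monodromy $N$ on $\HOM(V_1,V_2)$ acts by $N\cdot f=N\circ f-f\circ N$, and iterating yields
\[
N^r\cdot f = \sum_{s=0}^r\binom{r}{s}(-1)^sN^{r-s}\circ f\circ N^s.
\]
Condition~(3) then becomes the clean statement $N^r(f)\in\W_{-r-1}\HOM(V_1,V_2)$, and the lemma reduces to the following: for any mixed Weil--Deligne representation $W$ and any $\overline f\in(\gr^\W_0W)^{\Weil_K,N=0}$, there is a unique $f\in(\W_0W)^{\Weil_K}$ lifting $\overline f$ with $N^r(f)\in\W_{-r-1}W$ for all $r>0$; elements of $(\gr^\W_0\HOM(V_1,V_2))^{\Weil_K,N=0}$ correspond exactly to $\Weil_K$-equivariant, $N$-commuting morphisms $\gr^\W_\bullet V_1\to\gr^\W_\bullet V_2$. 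Once this is established, the assertions about linearity and compatibility with composition and tensor products will follow from uniqueness, after a routine check that weak morphisms are closed under composition and tensor products via the Leibniz rule for $N$ and subadditivity of the weight filtration.

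I would prove the reduced claim by induction on the (negation of the) lowest weight $-i$ of $W$. The base case $i\leq0$ is immediate: either $\W_0W=0$ (and the lift is zero) or $\W_0W=\gr^\W_0W$ (and the unique lift is $\overline f$ itself, for which all $N$-conditions are automatic since $N\overline f=0$). For the inductive step with $i\geq1$, write $W$ as
\[
0\rightarrow\gr^\W_{-i}W\rightarrow W\rightarrow\widetilde W\rightarrow 0,
\]
where $\widetilde W$ has lowest weight strictly greater than $-i$. By induction, $\overline f$ lifts uniquely to $\widetilde f\in(\W_0\widetilde W)^{\Weil_K}$ satisfying the $N$-conditions. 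Since $\widetilde f$ is Weil-fixed it lies in the Frobenius weight-$0$ generalised eigenspace $\widetilde W^0$, and exactness of the functor $(-)^0$ (as used in the proof of Theorem~\ref{thm:tannakian}) allows us to choose a lift $f_0\in W^0$. This $f_0$ automatically lies in $\W_0W$ because its image in $\widetilde W$ lies in $\W_0\widetilde W$ and the kernel $\gr^\W_{-i}W$ is contained in $\W_0W$. Moreover, for $0<r<i$, the inclusion $\gr^\W_{-i}W\subseteq\W_{-r-1}W$ upgrades the inductively-known condition $N^r(\widetilde f)\in\W_{-r-1}\widetilde W$ to $N^r(f_0)\in\W_{-r-1}W$.

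The heart of the argument, and the step I expect to be the main obstacle, is the case $r\geq i$. Here $N^i(f_0)$ lies in $(\gr^\W_{-i}W)^{-2i}$ (using that $f_0\in W^0$ and $N$ shifts Frobenius weights by $-2$), and the crucial input is that purity of $\gr^\W_{-i}W$ of weight $-i$ yields an isomorphism $N^i\colon(\gr^\W_{-i}W)^0\isoarrow(\gr^\W_{-i}W)^{-2i}$. This provides a unique correction $h\in(\gr^\W_{-i}W)^0$ such that $f:=f_0+h$ satisfies $N^i(f)=0$, hence $N^r(f)=0\in\W_{-r-1}W$ for all $r\geq i$; meanwhile, $h\in\W_{-i}W\subseteq\W_{-r-1}W$ preserves the $0<r<i$ conditions. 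Uniqueness of $f\in W^0$ lifting $\widetilde f$ with these properties comes from the same injectivity of $N^i$ on $(\gr^\W_{-i}W)^0$. Finally, for any $w\in\Weil_K$, the element $\rho(w)f$ satisfies all the characterising properties of $f$ (using $N\rho(w)=p^{-v(w)}\rho(w)N$ to propagate the $N$-conditions), so uniqueness forces $\rho(w)f=f$, yielding Weil-equivariance and completing the proof.
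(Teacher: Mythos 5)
Your proposal is correct and follows essentially the same route as the paper: reframing condition (3) as $N^r(f)\in\W_{-r-1}\HOM(V_1,V_2)$, reducing to the lifting problem for a single mixed $W$, inducting on the lowest weight via the extension $0\rightarrow\gr^\W_{-i}W\rightarrow W\rightarrow\widetilde W\rightarrow 0$, choosing a lift in $W^0$, and using the purity isomorphism $N^i\colon(\gr^\W_{-i}W)^0\isoarrow(\gr^\W_{-i}W)^{-2i}$ to pin down a unique correction making $N^i(f)=0$. The only cosmetic differences are that you make the use of exactness of $(-)^0$ and the deduction of $\Weil_K$-invariance from uniqueness fully explicit, both of which the paper leaves tacit.
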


\begin{defn}\label{def:canonical_splitting}
Let $V$ be a mixed Weil--Deligne representation. The \emph{canonical splitting of the weight filtration} is the $\Weil_K$-equivariant linear isomorphism
\[
V \isoarrow \gr^\W_\bullet V
\]
obtained by applying Lemma~\ref{lem:weak_morphisms} to the evident isomorphism $\gr^\W_\bullet V\isoarrow \gr^\W_\bullet\gr^\W_\bullet V$. In other words, it is the $\Weil_K$-equivariant map $f$ uniquely characterised by the fact that it takes $\gr^\W_i V$ into $\W_i V$, and that for any $v_i\in\W_i V$ we have
\[
\sum_{s=0}^r{r\choose s}(-1)^s (\gr^\W_\bullet N)^{r-s}\left(f\left(N^s(v_i)\right)\right)\in\bigoplus_{j\geq r}\gr^\W_{i-j-1}V
\]
for all $r>0$, where $\gr^\W_iN$ denotes the induced monodromy operator on $\gr^\W_i V$.

It follows from Lemma~\ref{lem:weak_morphisms} that this splitting is functorial and compatible with tensor products.
\end{defn}

\begin{example}
Suppose that $V$ is an extension of $\Fd$ by $\Fd(1)$ in the category of Weil--Deligne representations. We endow $V$ with the filtration such that $\W_0V=V$, $\W_{-1}V=\W_{-2}V=\Fd(1)$ and $\W_{-3}V=0$, so that $V$ is a mixed Weil--Deligne representation. $V$ admits a canonical choice of basis $v_0,v_2$, where $v_2\in\Fd(1)$ is the canonical generator and $v_0$ is the unique $\varphi_K$-invariant lift of the canonical generator of $\Fd$. With respect to this basis, the actions of $\varphi_K$ and $N$ are given by the matrices $\begin{pmatrix}1&0\\0&q^{-1}\end{pmatrix}$ and $\begin{pmatrix}0&0\\\lambda&0\end{pmatrix}$, respectively, for some $\lambda\in\Fd$.

It follows from this description that the linear isomorphism $\Fd\oplus\Fd(1)\isoarrow V$ defined by the basis $v_0,v_2$ satisfies the conditions of Lemma~\ref{lem:weak_morphisms}, and hence is the canonical splitting of the weight filtration. Note that this splitting is not a splitting in the category of Weil--Deligne representations when $\lambda\neq 0$ above.
\end{example}


The following corollary plays a crucial role in our proofs of Theorem~\ref{thm:frob}.

\begin{corollary}[to Definition~\ref{def:canonical_splitting}]\label{cor:semisimple_graded_pieces}
Let $V$ be a mixed Weil--Deligne representation. Then $V$ is Frobenius-semisimple if and only if $\gr^\W_\bullet V$ is Frobenius-semisimple.
\end{corollary}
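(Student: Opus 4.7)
The plan is to derive the statement directly from the canonical splitting of Definition~\ref{def:canonical_splitting}. That definition supplies, for every mixed Weil--Deligne representation $V$, a $\Weil_K$-equivariant linear isomorphism $\gr^\W_\bullet V \isoarrow V$; it is only an isomorphism of $\Weil_K$-representations on underlying vector spaces, not in general a morphism in $\Rep_\Fd(\WeilD_K)$, because it need not commute with $N$.

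Since $\varphi_K\in\Weil_K$, this isomorphism intertwines the actions of $\varphi_K$ on the two sides. Semisimplicity of a linear endomorphism of a finite-dimensional vector space is invariant under conjugation by a linear isomorphism, so $\varphi_K$ acts semisimply on $V$ if and only if it acts semisimply on $\gr^\W_\bullet V$. By the definition of Frobenius-semisimplicity, this is exactly the claim.

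There is no substantive obstacle to overcome, since all the work has been packaged into Lemma~\ref{lem:weak_morphisms} and Definition~\ref{def:canonical_splitting}. The only point worth emphasizing is that the argument uses $\Weil_K$-equivariance of the splitting in an essential way: because the canonical splitting is not in general a morphism of Weil--Deligne representations, the corollary cannot be deduced from the weaker assertion that semisimplicity is stable under subquotients and direct sums within $\Rep_\Fd(\WeilD_K)$. In the pro-finite-dimensional setting flagged earlier in the section, one additionally invokes functoriality of the canonical splitting (also recorded in Definition~\ref{def:canonical_splitting}) to pass from finite levels to the inverse limit before applying the same conjugation argument levelwise.
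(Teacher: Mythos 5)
Your proof is correct and follows exactly the route the paper intends: the paper gives no separate argument because the corollary is immediate from the $\Weil_K$-equivariance of the canonical splitting in Definition~\ref{def:canonical_splitting}, which is precisely the conjugation argument you spell out. Your closing remarks about non-$N$-equivariance and the pro-finite-dimensional case are accurate but not essential additions.
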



\subsection{Mixed representations as deformations of pure representations}

The canonical splitting of a mixed Weil--Deligne representation allows us to view every mixed Weil--Deligne representation as a deformation of a pure Weil--Deligne representation. Here, a \emph{pure} Weil--Deligne representation means a graded Weil--Deligne representation whose $i$th graded piece is pure of weight $i$ for all $i$. We will continue to write $\gr^\W_iV$ for the graded pieces of a pure Weil--Deligne representation, which are direct summands of $V$. We will usually write $\pureN$ for the monodromy operator on a pure Weil--Deligne representation.

The sense in which mixed Weil--Deligne representations are deformations of pure ones is made precise in the following definition.

\begin{defn}\label{def:mixing}
Let $V$ be a pure Weil--Deligne representation. A collection of \emph{mixing data} $\underline\delta$ for $V$ consists of $\W$-graded endomorphisms $\delta_r\colon\gr^\W_\bullet V\rightarrow\gr^\W_{\bullet-r} V$ of degree $-r$ for $r>0$, satisfying
\begin{itemize}
	\item $\rho(w)\circ\delta_r\circ\rho(w)^{-1}=p^{v(w)}\cdot\delta_r$ for all $r>0$ and all $w\in\Weil_K$; and
	\item $\ad_\pureN^{r-1}(\delta_r)=0$ for all $r>0$ (so in particular $\delta_1=0$). Here, $\ad_\pureN^{r-1}(-)$ denotes the $(r-1)$-fold iterate of the commutator map $\ad_\pureN(-)=[\pureN,-]$.
\end{itemize}
Given mixing data $\underline\delta^0$ and $\underline\delta^1$ for pure Weil--Deligne representations~$V_0$ and~$V_1$, there are natural mixing data on $V_0\oplus V_1$ and $V_0\otimes V_1$, given by $\delta_r=\delta_r^0\oplus\delta_r^1$ and $\delta_r=\delta_r^0\otimes1+1\otimes\delta_r^1$, respectively.
\end{defn}

Now if $\underline\delta$ is a collection of mixing data for a pure Weil--Deligne representation $V$, we define an associated mixed Weil--Deligne representation $V_{\underline\delta}$, whose underlying Weil representation is $V$, whose $\W$-filtration is the filtration underlying the $\W$-grading on $V$, and whose monodromy operator is $N_{\underline\delta}=N+\sum_{i>0}\delta_i$. It is easy to see that the associated graded of $V_{\underline\delta}$ is $V$, and hence $V$ is indeed a mixed Weil--Deligne representation.


\begin{lemma}\label{lem:mixing_pure}
The functor $(V,\underline\delta)\mapsto V_{\underline\delta}$ defines a $\otimes$-equivalence from the category of pure Weil--Deligne representations with mixing data to the category of mixed Weil--Deligne representations.
\begin{proof}
Let us describe the inverse functor. Let $V$ be a mixed Weil--Deligne representation, and identify $V$ with its associated graded $\gr^\W_\bullet V$ via the canonical splitting from Definition~\ref{def:canonical_splitting}. For clarity, we will write $\pureN=\gr^\W_\bullet N$ for the monodromy operator on $\gr^\W_\bullet V$ and $N$ for the monodromy operator on $V$. Now $\delta\colon=N-\pureN$ is $\W$-filtered of degree $\leq-1$, so we write $\delta=\sum_{r>0}\delta_r$ with $\delta_r$ $\W$-graded of degree $-r$. Showing that the $\delta_r$ satisfy the conditions in Definition~\ref{def:mixing} is equivalent to showing that $\rho(w)\circ\delta\circ\rho(w)^{-1}=p^{v(w)}\cdot\delta$ for all $w\in\Weil_K$, and that $\ad_\pureN^{r-1}(\delta)$ is $\W$-filtered of degree $\leq-r-1$ for all $r>0$. The first of these follows immediately from the commutation relations for $N$ and $\pureN$.

For the second, we proceed by strong induction on $r>0$. Let $\varepsilon_r\in\End(V)$ be given by
\[
\varepsilon_r = \sum_{s=0}^r{r\choose s}(-1)^s\pureN^{r-s}\circ N^r\,.
\]
An easy calculation verifies that $\varepsilon_1=\delta$ and $\varepsilon_{r+1}=\ad_\pureN(\varepsilon_r)+\delta\circ\varepsilon_r$ for all $r>0$. Thus $\varepsilon_r$ is a linear combination of composites $\ad_\pureN^{s_1-1}(\delta)\circ\ad_\pureN^{s_2-1}(\delta)\circ\dots\circ\ad_\pureN^{s_k-1}(\delta)$ for positive integers $s_i$ summing to $r$, and the coefficient of $\ad_\pureN^{r-1}(\delta)$ in $\varepsilon_r$ is $1$. Assuming for the purposes of induction that $\ad_\pureN^{s-1}(\delta)$ is $\W$-filtered of degree $\leq-s-1$ for all $s<r$, we obtain that $\varepsilon_r\equiv\ad_\pureN^{r-1}(\delta)$ modulo $\W_{-r-1}\End(V)$. But the construction of the canonical splitting ensures that $\varepsilon_r\in\W_{-r-1}$, and so $\ad_\pureN^{r-1}(\delta)\in\W_{-r-1}$ as claimed.

It remains to check that this functor is inverse to the functor $(V,\underline\delta)\mapsto V_{\underline\delta}$. One direction -- that $V=V_{\underline\delta}$ for a mixed Weil--Deligne representation $V$ where $\underline\delta$ is the mixing data constructed above -- is clear. For the other, we wish to show that if $\underline\delta$ is mixing data for a pure Weil--Deligne representation $V$ then the mixing data constructed on $V_{\underline\delta}$ is $\underline\delta$ again. For this, we reverse the above argument: the condition that $\ad_\pureN^{r-1}(\delta)$ is $\W$-filtered of degree $\leq-r-1$ for all $r>0$ ensures that $\sum_{s=0}^r{r\choose s}(-1)^sN^{r-s}\circ N_{\underline\delta}^r$ is $\W$-filtered of degree $\leq-r-1$ for all $r>0$, and hence the identity map $V_{\underline\delta}\isoarrow V$ is the canonical splitting of the weight filtration of $V_{\underline\delta}$. This implies that the mixing data constructed from $V_{\underline\delta}$ is indeed $\underline\delta$.
\end{proof}
\end{lemma}

\subsection{The Tannaka group of mixed Weil--Deligne representations}\label{ss:tannaka}

To conclude this section, we will use the above structure theory of mixed Weil--Deligne representations to give an explicit description of the Tannaka group of mixed Weil--Deligne representations. For this, we write $\Weil^\mix_K$ for the Tannaka group of Weil representations, all of whose generalized Frobenius eigenvalues are $q$-Weil numbers.

\begin{lemma}\label{lem:explicit_tannaka}
Let $\G_\pure$ (resp.\ $\G_\mix$) denote the Tannaka group of pure (resp.\ mixed) Weil--Deligne representations.
\begin{enumerate}
	\item\label{lemitem:G_pure} There is a canonical isomorphism $\G_\pure\cong\SL_2\rtimes\Weil^\mix_K$, where $\Weil_K$ acts on $\SL_2$ via conjugation by $w\mapsto\begin{pmatrix}1&0\\0&p^{v(w)}\end{pmatrix}\in\GL_2$.
	\item\label{lemitem:G_mix} Let $B$ denote the standard $2$-dimensional representation of $\SL_2\rtimes\Weil_K^\mix$, i.e.\ the standard representation of $\SL_2$ with the Weil group acting via $w\mapsto\begin{pmatrix}1&0\\0&p^{v(w)}\end{pmatrix}$. Then there is a canonical isomorphism $\G_\mix\cong\mathcal U\rtimes\G_\pure$, where $\mathcal U$ is the free pro-unipotent group generated by\footnote{By this, we mean that $\mathcal U$ is the pro-unipotent group pro-representing the functor $\mathcal U'\mapsto\varinjlim_N\Hom(\prod_{r=0}^N\Sym^r(B)(1),\Lie(\mathcal U'))$ from (finite-dimensional) unipotent groups to sets.} $\prod_{r\geq0}\Sym^r(B)(1)$ and the action of $\G_\pure=\SL_2\rtimes\Weil_K^\mix$ on $\mathcal U$ is the natural one.
\end{enumerate}
\end{lemma}

\begin{remark}\label{rmk:tannaka_dictionary}
The above lemma says that any mixed Weil--Deligne representation $V$ carries a canonical action of $\mathcal U\rtimes\SL_2\rtimes\Weil_K^\mix$. The relationship between these two structures is as follows.
\begin{enumerate}
	\item The restriction of the action to $\Weil_K^\mix$ is the Weil group action on $V$.
	\item There is a \emph{Frobenius-weight torus} $\GG_m\hookrightarrow\Weil_K^\mix$. The grading on $V$ corresponding to the action of this torus is the grading $V=\bigoplus_iV^i$, i.e.\ the action of $\lambda\in\GG_m$ on $V^i$ is by multiplication by $\lambda^i$. (In fact, this \emph{defines} the Frobenius-weight torus.)
	\item There is a \emph{weight torus} $\GG_m\hookrightarrow\SL_2\rtimes\Weil_K^\mix$ given by $\lambda\mapsto\left(\begin{pmatrix}\lambda^{-1}&0\\0&\lambda\end{pmatrix},\iota(\lambda)\right)$ with $\iota$ the inclusion of the Frobenius-weight torus. The grading on $V$ corresponding to the action of this torus is the canonical splitting of the weight filtration $\W_\bullet$. In particular, the weight filtration on $V$ is the one underlying this grading. One can check that the weight torus is central in $\SL_2\rtimes\Weil_K^\mix$.
	\item The action of $\log(Y)$, where $Y=\begin{pmatrix}1&0\\1&1\end{pmatrix}\in\SL_2$, is the $\W$-graded monodromy operator $\pureN:=\gr^\W_\bullet N$, viewed as an endomorphism of $V$ via the canonical splitting of the weight filtration.
	\item For~$r\geq2$, let $\exp(\delta_r)\in\Sym^{r-2}(B)(1)^{-2}=\Fd(1)$ denote the standard generator $e_1^{r-2}$, where $e_1$ is the first basis vector in $B$. We view $\exp(\delta_r)$ as an element of $\mathcal U$. Then, as the notation suggests, the action of $\log(\exp(\delta_r))$ is equal to $\delta_r$ where $\underline\delta=(\delta_r)_{r\geq2}$ is the mixing data of $V$ as in Definition~\ref{def:mixing}. In particular, the monodromy operator $N$ is given by the action of $\log(Y)+\sum_{r\geq2}\delta_r$.
\end{enumerate}
\end{remark}


The following lemma explains the relevance of $\SL_2$ in the context of the weight--monodromy condition.

\begin{lemma}\label{lem:sneaky_SL_2}
Let $\mathcal C$ denote the category of finite-dimensional graded vector spaces $V=\bigoplus_jV^j$ endowed with an endomorphism $N\colon V^\bullet\rightarrow V^{\bullet-2}$ of degree $-2$ such that $N^j\colon V^j\isoarrow V^{-j}$ is an isomorphism for all $j\geq0$. Then $\mathcal C$ is neutral Tannakian, with Tannaka group (with respect to the evident \ambrit{fiber}{fibre} functor) canonically isomorphic to $\SL_2$.
\begin{proof}
If $V$ is a representation of $\SL_2$, we endow $V$ with the grading $V=\bigoplus_jV^j$ where $H_\lambda:=\begin{pmatrix}\lambda&0\\0&\lambda^{-1}\end{pmatrix}$ acts on $V^j$ by multiplication by $\lambda^j$. We let $N$ denote the endomorphism given by $\log(Y)$ with $Y=\begin{pmatrix}1&0\\1&1\end{pmatrix}$. From the commutation relation $H_\lambda\circ\log(Y)\circ H_\lambda^{-1}=\lambda^{-2}\log(Y)$ we see that $N$ is graded of degree $-2$, and we see, e.g.\ from the classification of irreducible representations of $\SL_2$, that $N^j\colon V^j\isoarrow V^{-j}$ is an isomorphism for all $j\geq0$.

This construction provides a $\otimes$-functor $F\colon\Rep(\SL_2)\rightarrow\mathcal C$. To show that $F$ is an equivalence, it suffices to show that it induces a bijection between the sets of isomorphism classes of irreducible objects, and that every object of $\mathcal C$ decomposes as a direct sum of irreducible objects. For the first part, if $V_r$ denotes the $r+1$-dimensional irreducible representation of $\SL_2$ ($r\geq0$), then $F(V_r)$ is generated under $N$ by a single element $v$ in degree $r$, subject to the relation $N^{r+1}(v)=0$. This implies that $F(V_r)$ is irreducible, e.g.\ since any proper subobject would have to contain $N^r(v)$ but not $v$. Conversely, if $V$ is an irreducible object of $\mathcal C$, then let $r$ be the greatest integer such that $V^r\neq0$ (so $r\geq0$). If $v\in V^r\setminus\{0\}$ then $N^{r+1}(v)=0$ but $N^r(v)\neq0$, so that $v$ spans a subobject of $V$ isomorphic to $F(V_r)$. It follows that $V=F(V_r)$. Since the objects $F(V_r)$ are clearly non-isomorphic, we have established that $F$ induces a bijection on isomorphism classes of irreducible objects.

To establish that objects of $\mathcal C$ are completely decomposable, take any non-zero object $V$ and let $\overline V=V/U$ be an irreducible quotient, isomorphic to $F(V_r)$ for some $r\geq0$. Let $\overline v\in\overline V^r\setminus\{0\}$ be a highest weight vector and let $v\in V^r$ be any lift of $\overline v$. Since $N^{r+1}(v)\in U^{-r-2}$, there is some $u\in U^{r+2}$ such that $N^{r+1}(v)=N^{r+2}(u)$; replacing the lift $v$ with $v-N(u)$ if necessary, we may assume that $N^{r+1}(v)=0$. But then we see that the quotient map $V\twoheadrightarrow\overline V$ splits, via the map $N^j(\overline v)\mapsto N^j(v)$. Thus $\overline V$ is in fact a direct summand of $V$, which establishes complete reducibility.
\end{proof}
\end{lemma}

\begin{proof}[Proof of Lemma~\ref{lem:explicit_tannaka}]
\eqref{lemitem:G_pure} We describe a canonical $\otimes$-equivalence of categories between the category of pure Weil--Deligne representations and the category of representations of $\SL_2\rtimes\Weil^\mix_K$. In the one direction, if $V$ is a pure Weil--Deligne representation, then the grading $V=\bigoplus_j\left(\bigoplus_i\gr^\W_iV^{i+j}\right)$ and endomorphism $\pureN$ endow $V$ with the structure of an object of the category $\mathcal C$ from Lemma~\ref{lem:sneaky_SL_2}, and hence endows $V$ with an action $\rho\colon\SL_2\rightarrow\GL(V)$. If we let $V^{(n)}$ denote the object of $\mathcal C$ with the same vector space and grading but with endomorphism $p^n\pureN$, then the corresponding action of $\SL_2$ is given by precomposing the original action with conjugation by $\begin{pmatrix}1&0\\0&p^n\end{pmatrix}$. Now the action of an element $w\in\Weil_K$ can be viewed as an isomorphism $V\isoarrow V^{(v(w))}$ in $\mathcal C$, and is hence equivariant for the $\SL_2$-action. Unpacking this, this says that we have the commutation relation
\[
\rho(w)\circ\rho(M)\circ\rho(w)^{-1} = \rho\left(\begin{pmatrix}1&0\\0&p^{v(w)}\end{pmatrix}\cdot M\cdot\begin{pmatrix}1&0\\0&p^{-v(w)}\end{pmatrix}\right)
\]
for all $w\in\Weil_K$ and $M\in\SL_2$, so that the actions of $\Weil_K$ and $\SL_2$ together induce an action of $\SL_2\rtimes\Weil_K^\mix$ on $V$.

In the other direction, given a representation $V$ of $\SL_2\rtimes\Weil_K^\mix$, we obtain by restriction a representation of $\Weil_K$ and a nilpotent endomorphism $\pureN:=\log(Y)$ satisfying $\rho(w)\circ\pureN\circ\rho(w)^{-1}=p^{v(w)}\cdot\pureN$ for all $w\in\Weil_K$ (where $Y$ is as in Remark~\ref{rmk:tannaka_dictionary}). Now the weight torus $\GG_m\hookrightarrow\SL_2\rtimes\Weil_K^\mix$ is central, and so provides a decomposition $V=\bigoplus_i\gr^\W_iV$ in the category of Weil--Deligne representations. It then follows from Lemma~\ref{lem:sneaky_SL_2} that the map $\pureN^j\colon\gr^\W_iV^{i+j}\isoarrow\gr^\W_iV^{i-j}$ is an isomorphism for all $j\geq0$ and all $i$, so that this provides $V$ with the structure of a pure Weil--Deligne representation.

These two constructions are evidently self-inverse.

\eqref{lemitem:G_mix} It follows from Lemma~\ref{lem:mixing_pure} that the category of mixed Weil--Deligne representations is $\otimes$-equivalent to the category of pure Weil--Deligne representations together with morphisms $\Sym^{r-2}(B)(1)\rightarrow\End(V)$ of pure representations for every $r\geq2$ (where $\delta_r$ is the image of the generator in $\Sym^{r-2}(B)(1)^{-2}=\Fd(1)$). Specifying these morphisms is equivalent to specifying a $\G_\pure$-equivariant morphism $\mathcal U\rightarrow\GL(V)$, which provides the desired description of $\G_\mix$.
\end{proof}

\begin{remark}\label{rmk:X}
In \S\ref{s:geometry}, an important role will be played by the action of the element $X=\begin{pmatrix}1&1\\0&1\end{pmatrix}\in\SL_2$, in particular its fixed vectors (the ``highest weight vectors'' of the underlying $\SL_2$-representation). One can verify that the $X$-fixed vectors of a mixed Weil--Deligne representation $V$ are given by
\[
(V^j)^X = \{\text{$v\in V^j$ such that $N^r(v)\in\W_{j-r}$ for all $r\geq0$}\}\,.
\]
Using this, one sees that the mixing data $(\delta_r)_{r>0}$ associated to a mixed Weil--Deligne representation $V$ is $X$-fixed, i.e.\ we have $X\circ\delta_r\circ X^{-1}=\delta_r$ for all $r$. Indeed, this is actually equivalent to the condition that $\ad_\pureN^{r-1}(\delta_r)=0$ (in the presence of the condition that $\rho(w)\circ\delta_r\circ\rho(w)^{-1}=p^{-v(w)}\delta_r$ for all $w\in\Weil_K$, which ensures that $\delta_r\in\gr^\W_{-r}\End(V)^{-2}$).
\end{remark}

\begin{remark}\label{rmk:varphi_actions}
If $V$ is a $\W$-filtered $(\varphi,N,G_{L|K})$-module which is mixed in the sense that its associated $L_0$-linear Weil--Deligne representation is mixed, then all of the constructions in this section are compatible with the crystalline Frobenius $\varphi$: for instance the canonical splitting (Definition~\ref{def:canonical_splitting}) is $\varphi$-invariant, and the mixing operators $\delta_r$ (Definition~\ref{def:mixing}) satisfy $\varphi\circ\delta_r\circ\varphi^{-1}=p^{-1}\delta_r$. Perhaps the easiest way to see this is to observe that $\varphi$ provides an isomorphism of mixed Weil--Deligne representations from $V$ to $V$ with a rescaled monodromy operator $N$, and observe that all of our constructions are functorial and are unchanged (up to appropriate scaling factors) on rescaling $N$.
\end{remark}

\section{Results on semisimplicity and weight--monodromy}\label{s:semisimplicity}
As before, we fix $K$ a finite extension of $\BQ_p$, with residue field $k$ and ring of integers $\mathscr{O}_K$.  Let $X$ be a geometrically connected $K$-variety, and let $x\in X(K)$ be a rational point. Fixing an algebraic closure $\bar K$ of $K$, we let $\bar x$ be the geometric point of $X$ associated to $x$.
\subsection{The \'etale fundamental group} 
Let $\ell$ be a prime.  We let $\pi_1^\ell(X_{\bar K}, \bar x)$ be the pro-$\ell$ completion of the geometric \'etale fundamental group $\pi_1^\et(X_{\bar K}, \bar x)$.  As $x$ was a rational point of $X$, there is a natural action of $\Gal(\overline{K}/K)$ on $\pi_1^\ell(X_{\bar K}, \bar x)$.

We let $$\mathbb{Z}_\ell\llbrack\pi_1^\ell(X_{\bar K}, \bar x)\rrbrack:=\varprojlim_{\pi_1^\ell(X_{\bar K}, \bar x)\twoheadrightarrow H} \mathbb{Z}_\ell[H]$$ be the group ring of $\pi_1^\ell(X_{\bar k}, \bar x)$, where the inverse limit is taken over all finite $\ell$-groups arising as continuous quotients of $\pi_1^\ell(X_{\bar k}, \bar x)$.  There is a natural augmentation map $$\epsilon\colon \mathbb{Z}_\ell\llbrack\pi_1^\ell(X_{\bar K}, \bar x)\rrbrack\to \mathbb{Z}_\ell$$ (induced by the map $g\mapsto 1$, for $g\in \pi_1^\ell(X_{\bar K}, \bar x)$), and we let $\mathscr{I}$ be the kernel of this map --- the \emph{augmentation ideal}.  As with any group algebra, there is a natural comultiplication map $$\Delta\colon \mathbb{Z}_\ell\llbrack\pi_1^\ell(X_{\bar K}, \bar x)\rrbrack\to \mathbb{Z}_\ell\llbrack\pi_1^\ell(X_{\bar K}, \bar x)\rrbrack\hatotimes \mathbb{Z}_\ell\llbrack\pi_1^\ell(X_{\bar K}, \bar x)\rrbrack$$ sending a group element $g$ to $g\otimes g$. (Here $\hatotimes$ denotes the completed tensor product.)

Finally, we set $$\BQ_\ell\llbrack\pi_1^\ell(X_{\bar K}, \bar x)\rrbrack=\varprojlim_n (\mathbb{Z}_\ell\llbrack\pi_1^\ell(X_{\bar K}, \bar x)\rrbrack/\mathscr{I}^n\otimes \BQ_\ell).$$ The comultiplication map $\Delta$ induces a comultiplication on $\BQ_\ell\llbrack\pi_1^\ell(X_{\bar K}, \bar x)\rrbrack$. We abuse notation to denote the augmentation ideal of $\BQ_\ell\llbrack\pi_1^\ell(X_{\bar K}, \bar x)\rrbrack$ by $\mathscr{I}$.  The category of topological $\BQ_\ell\llbrack\pi_1^\ell(X_{\bar K}, \bar x)\rrbrack$-modules which are finite-dimensional as $\BQ_\ell$-vector spaces is equivalent to the category of  continuous unipotent $\pi_1^\ell(X_{\bar K},\bar x)$-representations on $\BQ_\ell$-vector spaces.  The ring 
$\BQ_\ell\llbrack\pi_1^\ell(X_{\bar K}, \bar x)\rrbrack$ is thus the (topological) opposite Hopf algebra to the ring of functions on the $\BQ_\ell$-pro-unipotent fundamental group of $X$.

If $\bar x_1, \bar x_2$ are two geometric points of $X$, we let $\pi_1^\et(X_{\bar K}; \bar x_1, \bar x_2)$ be the pro-finite set of ``\'etale paths'' from $\bar x_1$ to $\bar x_2$ (that is, the set of isomorphisms between the fiber functors associated to $\bar x_1, \bar x_2$).  This is a (right) torsor for the group $\pi_1^\et(X_{\bar K}; \bar x_1)$; let $\pi_1^\ell(X_{\bar K}; \bar x_1, \bar x_2)$ be the associated (right) torsor for $\pi_1^\ell(X_{\bar K}, \bar x_1)$.  It is easy to check that the natural left action of $\pi_1^\et(X_{\bar K}, \bar x_2)$ on $\pi_1^\et(X_{\bar K}; \bar x_1, \bar x_2)$ descends to a left action of $\pi_1^{\ell}(X_{\bar K}, \bar x_2)$ on $\pi_1^{\ell}(X_{\bar K}; \bar x_1, \bar x_2)$.  Let $$\mathbb{Z}_\ell\llbrack\pi_1^{\ell}(X_{\bar K}; \bar x_1, \bar x_2)\rrbrack:=\varprojlim_{\pi_1^{\ell}(X_{\bar K}; \bar x_1, \bar x_2)\twoheadrightarrow H} \mathbb{Z}_\ell[H],$$ where the inverse limit is taken over all finite sets with a continuous surjection from $\pi_1^{\ell}(X_{\bar K}; \bar x_1, \bar x_2)$.  This is a free right module of rank one over $\mathbb{Z}_\ell\llbrack\pi_1^{\ell}(X_{\bar K}, \bar x_1)\rrbrack$, and thus inherits an $\mathscr{I}$-adic filtration; we let $$\BQ_\ell\llbrack\pi_1^{\ell}(X_{\bar K}; \bar x_1, \bar x_2)\rrbrack:= \varprojlim_n (\mathbb{Z}_\ell\llbrack\pi_1^{\ell}(X_{\bar K}; \bar x_1, \bar x_2)\rrbrack/\mathscr{I}^n\otimes \BQ_\ell).$$  This vector space also has a natural filtration, which we call the $\mathscr{I}$-adic filtration by an abuse of notation, defined by $$\mathscr{I}^n=\on{ker}(\BQ_\ell\llbrack\pi_1^{\ell}(X_{\bar K}; \bar x_1, \bar x_2)\rrbrack\to \mathbb{Z}_\ell\llbrack\pi_1^{\ell}(X_{\bar K}; \bar x_1, \bar x_2)\rrbrack/\mathscr{I}^n\otimes \BQ_\ell).$$

We define a \emph{rational tangential basepoint} of $X$ to be a $K\llpara t\rrpara$-point of $X$; the inclusion $K\hookrightarrow K\llpara t\rrpara$ allows one to view any $K$-point of $X$ as a rational tangential basepoint.  We let $\overline{K\llpara t\rrpara}$ be the usual algebraic closure of $\overline{K}\llpara t\rrpara$, namely the field of Puiseux series $\overline{K}\llpara t^\BQ\rrpara$, which we fix for the rest of this paper.

Now if $x_1, x_2$ are rational tangential basepoints of $X$, the group $\on{Gal}(\overline{K\llpara t\rrpara}/K\llpara t\rrpara)$ acts on the triple $(X_{\overline{K}}, \bar x_1, \bar x_2)$, and hence by functoriality of $\pi_1^\ell(X_{\bar K}; \bar x_1, \bar x_2)$, on $\pi_1^\ell(X_{\bar K}; \bar x_1, \bar x_2)$.  In particular, if $\bar x_1=\bar x_2$ arise from a rational point $x\in X(K)$, we obtain an action of $\on{Gal}(\overline{K}/K)$ on $\pi_1^\ell(X_{\bar K}, \bar x)$.

Suppose $\ell\not=p$. Then we set $$\Pi^\ell(X_{\bar K}; \bar x_1, \bar x_2):= \BQ_\ell\llbrack\pi_1^{\ell}(X_{\bar K}; \bar x_1, \bar x_2)\rrbrack.$$

We give $\Pi^\ell(X_{\bar K}; \bar x_1, \bar x_2)$ the structure of a Weil--Deligne representation as in Example~\ref{ex:l_not_p}. Explicitly, for each $n$, $\Pi^\ell(X_{\bar K}; \bar x_1,\bar x_2)/\mathscr{I}^n$ is a finite-dimensional vector space, and thus naturally admits the structure of a Weil--Deligne representation as in Example~\ref{ex:l_not_p}. The construction is functorial, giving $\Pi^\ell(X_{\bar K}; \bar x_1,\bar x_2)$ the structure of a pro-finite-dimensional Weil--Deligne representation.

\subsection{The crystalline setting} Suppose $\ell=p$ (the residue characteristic of $K$), and let $x_1, x_2$ be rational tangential basepoints of $X$. Then we set $$\Pi^p(X_{\bar K}; \bar x_1, \bar x_2):=\varprojlim_n \D_\pst( \mathbb{Z}_p\llbrack\pi_1^{p}(X_{\bar K}; \bar x_1, \bar x_2)\rrbrack/\mathscr{I}^n\otimes \BQ_p).$$

\begin{remark}
The object $\Pi^p(X_{\bar K}; \bar x_1, \bar x_2)$ may be interpreted in terms of the log-crystalline fundamental group of the special fiber of a semi-stable model of $(\overline{X}, D)$, but we will not need this interpretation here.
\end{remark}

We abuse notation and set $$\mathscr{I}^n:=\ker(\Pi^p(X_{\bar K}; \bar x_1, \bar x_2)\to \D_\pst( \mathbb{Z}_p\llbrack\pi_1^{p}(X_{\bar K}; \bar x_1, \bar x_2)\rrbrack/\mathscr{I}^n\otimes \BQ_p).$$

As in Example~\ref{ex:l_is_p}, $\Pi^p(X_{\bar K}; \bar x_1, \bar x_2)$ has the structure of a (pro-finite-dimensional) Weil--Deligne representation.

\begin{remark}\label{de-rham-remark}
We briefly explain why the Galois representation $\mathbb{Z}_p\llbrack\pi_1^p(X_{\bar K}; \bar x_1, \bar x_2)\rrbrack/\mathscr{I}^n\otimes \BQ_p$ is de Rham --- this is proven in \cite[Lemma 7.1]{motivic_anabelian_heights} if $\bar x_1, \bar x_2$ arise from rational points of $X$, but does not appear in the literature if these geometric points arise from rational tangential basepoints. We will require this for the proof of Theorem \ref{weight-monodromy-pi1}.

Deligne and Goncharov \cite[Proposition 3.4]{deligne-goncharov} construct a local system on $X\times X$ whose fiber at a point $(\bar x_1, \bar x_2)$ is $(\mathbb{Z}_p\llbrack\pi_1^p(X_{\bar K}; \bar x_1, \bar x_2)\rrbrack/\mathscr{I}^n\otimes \BQ_p)^\vee$, as a higher direct image of a sheaf on a diagram of schemes over $X\times X$ (strictly speaking, Deligne and Goncharov work in the Betti setting, but an identical construction works in the \'etale setting). The fiber of this local system at a $\overline{K}$-point of $X\times X$ is de Rham by e.g. \cite[Lemma 7.1]{motivic_anabelian_heights} (or by \cite[Theorem 1.4]{andreatta-iovita-kim} in the case this point lies on the diagonal of $X\times X$). 

Hence this local system is de Rham in the sense of \cite{riemann-hilbert1} by \cite[Theorem 1.3]{riemann-hilbert1}. Now the result at tangential basepoints follows from the results of \cite[Section 4.3]{riemann-hilbert2}, for example.
\end{remark}

\subsection{The main theorems}
Let $X$ be a smooth geometrically connected variety over $K$, and $x_1, x_2$ rational tangential basepoints of $X$.  Let $\ell$ be a prime, which may be equal to $p$.

The main theorems of this section are:
\begin{theorem}[Weight--monodromy]\label{weight-monodromy-pi1}
The Weil--Deligne representation $\Pi^\ell(X_{\bar K}; \bar x_1, \bar x_2)$, with the canonical weight filtration (Definition~\ref{weight-defn} below), is mixed.
\end{theorem}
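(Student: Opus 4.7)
The plan is to reduce to the classical weight--monodromy theorem for the first cohomology $H^1(X_{\bar K}, \BQ_\ell)$ by exploiting the $\mathscr{I}$-adic filtration on the completed path algebra. Since mixedness is preserved under inverse limits of pro-finite-dimensional Weil--Deligne representations, it suffices to show that each finite-dimensional quotient $\Pi^\ell(X_{\bar K}; \bar x_1, \bar x_2)/\mathscr{I}^n$ is mixed. The $\mathscr{I}$-adic filtration on this quotient is by Weil--Deligne subrepresentations, and assuming the canonical weight filtration of Definition~\ref{weight-defn} is compatible with it (so that the short exact sequences $0 \to \mathscr{I}^k/\mathscr{I}^{k+1} \to \Pi^\ell/\mathscr{I}^{k+1} \to \Pi^\ell/\mathscr{I}^k \to 0$ are $\W$-strict), Proposition~\ref{mixed-extn} reduces the problem to showing each graded piece $\mathscr{I}^k/\mathscr{I}^{k+1}$ is mixed.

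A Hurewicz-type computation identifies $\mathscr{I}/\mathscr{I}^2$ with $H_1(X_{\bar K}, \BQ_\ell) \cong H^1(X_{\bar K}, \BQ_\ell)^\vee$ as Weil--Deligne representations in the based case; for the torsor case $\bar x_1 \neq \bar x_2$, choosing an auxiliary path trivializes the torsor over the based loops, and the $\Weil_K$-action then differs by left-multiplication by a $1$-cocycle valued in $1 + \mathscr{I}$, which acts trivially on each $\mathscr{I}^k/\mathscr{I}^{k+1}$. Consequently $\bigoplus_k \mathscr{I}^k/\mathscr{I}^{k+1}$ is a graded subquotient of the tensor algebra on $H_1(X_{\bar K}, \BQ_\ell)$. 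Since the mixed Weil--Deligne representations form a tensor abelian subcategory closed under extensions (by Theorem~\ref{thm:tannakian} and Proposition~\ref{mixed-extn}), it remains only to show that $H^1(X_{\bar K}, \BQ_\ell)$ is mixed.

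This is the classical weight--monodromy theorem for first cohomology: taking a smooth compactification $\bar X \supset X$ with strict normal crossings boundary $D$, the Gysin sequence exhibits $H^1(X_{\bar K}, \BQ_\ell)$ as an extension of a subspace of $H^0(D_{\bar K}^{(1)})(-1)$ (pure of weight $2$) by $H^1(\bar X_{\bar K}, \BQ_\ell)$ (pure of weight $1$ by weight--monodromy in the smooth proper case, classical for $\ell \neq p$ via Grothendieck's theorem on the Albanese, and due to Mokrane via log-crystalline cohomology for $\ell = p$). The main obstacle will be to verify that the canonical weight filtration from Definition~\ref{weight-defn} is a refinement of the $\mathscr{I}$-adic filtration whose associated graded matches the natural weight grading on the tensor algebra of $H^1$; once this is in place, the theorem follows by induction on $n$.
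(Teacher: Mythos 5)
Your proposal takes a genuinely different route from the paper's proof, and it contains a gap that the authors' route is designed to avoid. The paper reduces to $\dim X\le 2$ by the Lefschetz theorem, then uses the Deligne--Goncharov cosimplicial description of $(\Pi^\ell/\mathscr{I}^n)^\vee$ as hypercohomology of an explicit complex on $X^{n-1}$; this yields a spectral sequence whose $E^1$ page consists of cohomology groups $\HH^i(X^m,\BQ_\ell)$ for various $i$ and $m$, all mixed by Proposition~\ref{Hi-mixed} (the full weight--monodromy statement for $\HH^i$ of a surface), with \emph{filtered} differentials; it closes by Theorem~\ref{thm:tannakian} and Proposition~\ref{mixed-extn}. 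You instead try to reduce via the $\mathscr{I}$-adic filtration to the graded pieces $\mathscr{I}^k/\mathscr{I}^{k+1}$ and then to $\HH^1$ alone.

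The gap is in the passage from ``$\bigoplus_k\mathscr{I}^k/\mathscr{I}^{k+1}$ is a graded quotient of the tensor algebra on $H_1$'' together with ``$\HH^1$ is mixed'' to ``$\mathscr{I}^k/\mathscr{I}^{k+1}$ is mixed.'' Mixed Weil--Deligne representations are \emph{not} closed under quotients in the ambient category. Theorem~\ref{thm:tannakian} asserts that the kernel and cokernel of a morphism \emph{between two objects already known to be mixed} are mixed; it does not say that a quotient of a mixed representation by an arbitrary Weil--Deligne subrepresentation is mixed for the quotient filtration. For a concrete counterexample, take $\std_1$ with basis $1,\zeta$ (Frobenius eigenvalues of weight $0$ and $-2$, $N(1)=\zeta$): it is pure of weight $-1$, but its quotient by the Weil--Deligne subrepresentation spanned by $\zeta$ is the trivial representation, which with the quotient filtration sits in weight $-1$ yet has a Frobenius eigenvalue of weight $0$, violating the $N$-isomorphism condition. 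Hence ``subquotient of a mixed object'' is not ``mixed.''

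To repair this you would need to prove that the kernel of $(\mathscr{I}/\mathscr{I}^2)^{\otimes k}\twoheadrightarrow\mathscr{I}^k/\mathscr{I}^{k+1}$ is itself mixed with the subspace filtration. That kernel is governed by the relations of $\pi_1$, which (via cup products and Massey products) are controlled by $\HH^2$ and higher cohomology of $X$, not by $\HH^1$ alone. So the reduction to ``$\HH^1$ is mixed'' is too optimistic: effectively the same higher-degree cohomological input that the paper feeds in through Proposition~\ref{Hi-mixed} (weight--monodromy for $\HH^i$, $\dim X\le 2$) would be needed, repackaged as mixedness of the relation ideal. (For $X$ an affine curve $\pi_1$ is free, $\gr_{\mathscr{I}}^\bullet$ is the full tensor algebra, and your argument does go through, but that case is too special; and one cannot simply restrict to it via Lefschetz, since that produces a surjection of group rings and again lands one in the quotient problem.) The closing sentence of your sketch, that the remaining obstacle is merely to match the two filtrations, underestimates this: even with the filtrations matched, the subquotient issue above is a real obstruction that the argument as written does not address.
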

\begin{theorem}[Semisimplicity]\label{semi-simplicity-l-adic}
Each element of $W_K$ acts semisimply on the Weil--Deligne representation $\Pi^\ell(X_{\bar K}; \bar x_1, \bar x_2)$.
\end{theorem}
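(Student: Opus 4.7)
The plan is to combine Theorem~\ref{weight-monodromy-pi1} with the structure theory of mixed Weil--Deligne representations developed in Section~\ref{section:WD-reps}. By Theorem~\ref{weight-monodromy-pi1}, $\Pi^\ell(X_{\bar K}; \bar x_1, \bar x_2)$ is mixed, and the Corollary to Definition~\ref{def:canonical_splitting} (applied termwise to the pro-system) reduces Frobenius-semisimplicity of $\Pi^\ell$ to that of its associated graded. Semisimplicity of every element of $\Weil_K$ is equivalent to that of a single geometric Frobenius (using that $I_K$ acts through a finite quotient, which is part of the definition of Frobenius-semisimplicity), so it suffices to show that $\gr_\bullet^\W \Pi^\ell(X_{\bar K}; \bar x_1, \bar x_2)$ is Frobenius-semisimple.

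Next I would reduce from the torsor to the algebra and then to $H^1$. Choosing an \'etale path from $\bar x_1$ to $\bar x_2$ gives a filtered identification $\Pi^\ell(X_{\bar K}; \bar x_1, \bar x_2) \cong \Pi^\ell(X_{\bar K}, \bar x_1)$ under which the Weil action is twisted by left multiplication by a $1$-cocycle valued in $1 + \mathscr I$; since $\mathscr I$ raises the $\mathscr I$-adic degree, this twist vanishes on associated gradeds, yielding a $\Weil_K$-equivariant identification $\gr_\bullet^\W \Pi^\ell(X_{\bar K}; \bar x_1, \bar x_2) \cong \gr_\bullet^\W \Pi^\ell(X_{\bar K}, \bar x_1)$. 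The right-hand side is a filtered quotient of the completed tensor algebra on the generator space $\gr_\bullet^\W(\mathscr I / \mathscr I^2)$, which via the cohomological presentation of the pro-unipotent fundamental group is identified with the Weil--Deligne dual of $\gr_\bullet^\W H^1_\et(X_{\bar K}, \BQ_\ell)$ (respectively of $\D_\pst$ of it) when $\ell \neq p$ (respectively $\ell = p$). Since Frobenius-semisimplicity of pure Weil representations passes through duals, tensor products, direct sums, subquotients, and inverse limits of surjective systems, one reduces to showing that Frobenius acts semisimply on $\gr_\bullet^\W H^1_\et(X_{\bar K}, \BQ_\ell)$.

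For this last step, fix a smooth compactification $X \hookrightarrow \bar X$ with simple normal crossings complement $D$ (Hironaka, available since $K$ has characteristic zero). The weight filtration identifies $\gr^\W_1 H^1(X)$ with $H^1(\bar X)$ and presents $\gr^\W_2 H^1(X)$ as a subrepresentation of $H^0(\widetilde D)(-1)$, where $\widetilde D$ is the normalisation of $D$. Frobenius-semisimplicity on $H^0$ of a smooth proper variety is immediate, and on $H^1(\bar X)$ of a smooth proper variety it follows from Tate--Faltings theory (via the Albanese, reducing to the abelian variety case), or alternatively from \cite[Theorem~2.12]{litt} specialised to $H^1$.

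The main obstacle will be the careful identification of $\gr_\bullet^\W \Pi^\ell(X_{\bar K}, \bar x)$ with a quotient of a tensor algebra on $\gr_\bullet^\W H^1$, compatibly with the Weil--Deligne structure and uniformly across the cases $\ell \neq p$ and $\ell = p$; this requires unwinding the canonical weight filtration of Definition~\ref{weight-defn} and, in the $\ell = p$ case, checking that $\D_\pst$ is compatible with the tensor-algebra description. Once this cohomological description is in hand, the rest is a formal consequence of the structure theory of Section~\ref{section:WD-reps} and the classical Frobenius-semisimplicity results on $H^1$ of smooth projective varieties.
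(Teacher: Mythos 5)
Your opening reductions match the paper's: Theorem~\ref{weight-monodromy-pi1} gives mixedness, the Corollary to Definition~\ref{def:canonical_splitting} reduces Frobenius-semisimplicity of $\Pi^\ell$ to that of $\gr^\W_\bullet\Pi^\ell$, a path identifies the torsor with the group algebra, and Lefschetz reduces to curves. But the step you flag as ``the main obstacle'' is a genuine gap, not a bookkeeping exercise, and the paper resolves it with a geometric construction that your outline does not supply. The problematic assertion is that $\gr^\W_\bullet\Pi^\ell(X_{\bar K},\bar x)$ is a quotient of the completed tensor algebra on $\gr^\W_\bullet(\mathscr I/\mathscr I^2)$ as a Weil--Deligne representation. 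The weight filtration (Definition~\ref{weight-defn}) is not the $\mathscr I$-adic filtration: in the curve case $\W_{-2}=\mathscr I^2+\mathscr K$ and $\W_{-3}=\mathscr I^3+\mathscr I\mathscr K+\mathscr K\mathscr I\subseteq\mathscr I^2$, so there is a short exact sequence of Weil--Deligne representations
\[
0\longrightarrow\mathscr I^2/\W_{-3}\longrightarrow\gr^\W_{-2}\Pi^\ell\longrightarrow\W_{-2}/\mathscr I^2\longrightarrow 0\,.
\]
The outer terms do have semisimple Frobenius --- the left is a quotient of $\gr^\W_{-1}(\Pi^\ell)^{\otimes 2}$, the right is $\gr^\W_{-2}(\mathscr I/\mathscr I^2)$, both controlled by $H^1$ --- but all three objects are pure of the \emph{same} weight $-2$. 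The Corollary to Definition~\ref{def:canonical_splitting} controls extensions across different weights and says nothing here: an extension of two pure Weil--Deligne representations of equal weight need not split $\Weil_K$-equivariantly, and when it does not, the middle is not Frobenius-semisimple. Equivalently, there is no a priori $\Weil_K$-equivariant section of $\gr^\W_{-2}\Pi^\ell\twoheadrightarrow\W_{-2}/\mathscr I^2$, hence no map from a tensor algebra on $\gr^\W_\bullet(\mathscr I/\mathscr I^2)$ onto $\gr^\W_\bullet\Pi^\ell$; your formal reduction to $H^1$ breaks down at exactly this point.

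Splitting that extension --- equivalently, the surjection $\W_{-2}\twoheadrightarrow\W_{-2}/\mathscr I^2$ --- $\varphi_K$-equivariantly is the entire content of Step~2 of the paper's proof, and it is not formal. The authors choose rational tangential basepoints $y_i$ at the punctures, use the canonical $\varphi_K$-invariant paths $p(x,y_i)$ produced by Definition~\ref{defn:can-paths} (themselves an application of the canonical splitting to the path torsors), and conjugate the local monodromy generators $\log(\iota_{i*}\gamma)$ into $\W_{-2}\Pi^\ell(X_{\bar K},\bar x)$ along these paths, adapting \cite[Theorem~2.12]{litt}. Combined with Step~1 (splitting $\mathscr I\to\mathscr I/\W_{-2}$, where Definition~\ref{def:canonical_splitting} genuinely does apply) this yields a $\varphi_K$-equivariant splitting of the $\mathscr I$-adic truncation $\mathscr I\to\mathscr I/\mathscr I^2$, from which the theorem follows by density of the image of $\bigoplus_n(\mathscr I/\mathscr I^2)^{\otimes n}$ and semisimplicity of Frobenius on $\mathscr I/\mathscr I^2\cong H^1(X_{\bar K})^\vee$. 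To repair your argument you would need to reproduce this tangential-basepoint construction (or an equivalent splitting of the weight-$(-2)$ extension); it is the heart of the proof, not a careful identification to be deferred.
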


Theorem~\ref{semi-simplicity-l-adic} above admits the following down-to-earth reformulation. If $\ell\not=p$, the theorem says that every Frobenius element of $\text{Gal}(\overline K/K)$ acts semisimply on $\mathbb{Z}_\ell\llbrack\pi_1^\ell(X_{\bar K}; \bar x_1, \bar x_2)\rrbrack$, or equivalently that every element of $W_K$ acts semisimply on $\Pi^\ell(X_{\bar K}; \bar x_1, \bar x_2)/\mathscr{I}^n$ for all $n$ (with the structure of a Weil--Deligne representation given by Example~\ref{ex:l_not_p}). If $\ell=p$, the theorem is the analogous statement for a $K$-linear power of the crystalline Frobenius.  In both cases, the statement is equivalent to the semi-simplicity of the geometric Frobenius $\varphi_K$ fixed at the beginning of Section~\ref{section:WD-reps}.

As an immediate corollary, we have
\begin{corollary}\label{lie-algebra-corollary}
Let $x$ be a rational tangential basepoint of $X$. Let $\mathfrak{g}_X$ be the Lie algebra of the $\BQ_\ell$-pro-unipotent completion of $\pi_1^\et(X_{\bar K}, \bar x)$. Then $(\ell\not=p)$ $\mathfrak{g}_X$ is a mixed Weil--Deligne representation (with respect to the weight filtration defined below) and each element of $\Weil_K$ acts semisimply on it, and $(\ell=p)$ $\D_\pst(\mathfrak{g}_X)$ is mixed and each element of $\Weil_K$ acts semisimply on it.
\end{corollary}
\begin{proof}
The Lie algebra $\mathfrak{g}_K$ may be identified as the set of primitive elements in $\Pi^\ell(X_{\bar K}; \bar x, \bar x)$, i.e.~ the kernel of the map $$\Delta - \on{id}\otimes 1 - 1\otimes \on{id},$$ with the weight filtration inherited from $\Pi^\ell(X_{\bar K}; \bar x, \bar x)$ (Definition \ref{weight-defn}). The result is immediate.
\end{proof}

\subsubsection{Preliminaries} Before giving the proof, we will need to recall some lemmas, most of which are likely well-known to experts.
\begin{prop}\label{H1-abelianization}
There is a canonical (Galois-equivariant) isomorphism $$\pi_1^\ell(X_{\bar K}, \bar x_1)^\ab\overset{\sim}{\longrightarrow} \mathscr{I}/\mathscr{I}^2.$$
Moreover $\pi_1^\ell(X_{\bar K}, \bar x_1)^\ab/\pi_1^\ell(X_{\bar K}, \bar x_1)^\ab[\ell^\infty]\simeq \HH^1(X_{\bar K, \et}, \mathbb{Z}_\ell)^\vee$ canonically (in particular, as Galois modules).
\end{prop}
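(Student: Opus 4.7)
The plan is to prove the two isomorphisms in turn; both are standard consequences of the universal property of the completed group algebra, but I would be careful about setting them up in the pro-$\ell$ context so that Galois-equivariance is manifest.

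For the first isomorphism, I would define the map $\alpha\colon \pi_1^\ell(X_{\bar K}, \bar x_1)\to\mathscr{I}/\mathscr{I}^2$ by $g\mapsto (g-1)\bmod\mathscr{I}^2$. The identity
\[
gh - 1 = (g-1) + (h-1) + (g-1)(h-1)
\]
shows $\alpha$ is a continuous homomorphism (the final term lies in $\mathscr{I}^2$), and because the target is abelian it factors through $\pi_1^\ell(X_{\bar K}, \bar x_1)^\ab$. To construct the inverse, I would invoke the universal property: any continuous homomorphism from $\pi_1^\ell(X_{\bar K}, \bar x_1)$ to a pro-$\ell$ abelian group $A$ extends uniquely to a continuous $\mathbb{Z}_\ell$-algebra map $\mathbb{Z}_\ell\llbrack\pi_1^\ell(X_{\bar K}, \bar x_1)\rrbrack\to\mathbb{Z}_\ell\oplus A$ into the square-zero extension, which factors through $\mathbb{Z}_\ell\llbrack\pi_1^\ell(X_{\bar K}, \bar x_1)\rrbrack/\mathscr{I}^2\cong\mathbb{Z}_\ell\oplus\mathscr{I}/\mathscr{I}^2$ and so induces a linear map $\mathscr{I}/\mathscr{I}^2\to A$. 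Applying this to the tautological projection $\pi_1^\ell(X_{\bar K}, \bar x_1)\twoheadrightarrow\pi_1^\ell(X_{\bar K}, \bar x_1)^\ab$ produces the two-sided inverse of $\alpha$. Galois-equivariance is then automatic from the functoriality of every step in the pair $(X, x_1)$.

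For the second isomorphism, I would start from the classical identifications
\[
\HH^1(X_{\bar K, \et}, \mathbb{Z}_\ell) = \Hom_\cts(\pi_1^\et(X_{\bar K}, \bar x_1), \mathbb{Z}_\ell) = \Hom_\cts(\pi_1^\ell(X_{\bar K}, \bar x_1)^\ab, \mathbb{Z}_\ell),
\]
the second equality using that $\mathbb{Z}_\ell$ is pro-$\ell$ and abelian. Dualising over $\mathbb{Z}_\ell$ gives a canonical Galois-equivariant map
\[
\pi_1^\ell(X_{\bar K}, \bar x_1)^\ab \longrightarrow \HH^1(X_{\bar K, \et}, \mathbb{Z}_\ell)^\vee
\]
whose kernel is exactly the $\ell$-power torsion subgroup. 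The main point that needs checking is that this map becomes an isomorphism modulo torsion. The hard input will be the finite generation of $\pi_1^\ell(X_{\bar K}, \bar x_1)^\ab$ as a $\mathbb{Z}_\ell$-module: once this is known, standard linear algebra over $\mathbb{Z}_\ell$ identifies the torsion-free quotient with its double dual. Finite generation itself I would deduce from the fact that $\pi_1^\ell(X_{\bar K}, \bar x_1)$ is topologically finitely generated --- this reduces via the Riemann Existence Theorem (valid since $\mathrm{char}(K)=0$) to finite generation of the topological fundamental group of a smooth complex variety.
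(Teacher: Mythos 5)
Your proof is correct and is essentially the standard direct argument; the paper itself simply cites \cite[Proposition~2.4]{litt} for this fact, so you have supplied the content that reference contains. Both parts are sound: the first isomorphism via $g\mapsto g-1$ and its inverse via the square-zero extension $\mathbb{Z}_\ell\llbrack\pi\rrbrack/\mathscr{I}^2\cong\mathbb{Z}_\ell\oplus\mathscr{I}/\mathscr{I}^2$, and the second via the identification $\HH^1(X_{\bar K,\et},\mathbb{Z}_\ell)=\Hom_\cts(\pi_1^{\ell,\ab},\mathbb{Z}_\ell)$ followed by the double-dual argument. The one place you wave your hands slightly is in asserting that $\pi_1^\ell(X_{\bar K},\bar x_1)^\ab$ is a finitely generated $\mathbb{Z}_\ell$-module. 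The chain of reductions you sketch (topological finite generation of $\pi_1^\ell$, hence of $\pi_1^\et$, hence of the topological $\pi_1$ of a complex model) is correct, but for the double-dual step you really want to record the consequence explicitly: a topologically finitely generated pro-$\ell$ abelian group is a finitely generated $\mathbb{Z}_\ell$-module (by topological Nakayama, or because it is a quotient of $\mathbb{Z}_\ell^n$), and for such a module $M$ the natural map $M\to\Hom(\Hom_\cts(M,\mathbb{Z}_\ell),\mathbb{Z}_\ell)$ has kernel exactly $M[\ell^\infty]$ and is surjective. Making this explicit closes the gap; otherwise your argument matches the expected one.
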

\begin{proof}
See \cite[Proposition 2.4]{litt}.
\end{proof}
We will also need the following part of the Weight--Monodromy Conjecture.
\begin{prop}\label{Hi-mixed}
Let $Y$ be any smooth $K$-variety with $\dim(Y)\leq 2$. Let $i\in\mathbb{Z}$ and let $\W_\bullet$ be the weight filtration on $\HH^i(Y_{\bar K, \et}, \BQ_\ell)$ \cite{deligne-hodge-i}. Then the Weil--Deligne representation $\HH^i(Y_{\bar K, \et}, \BQ_\ell)$ is mixed with positive weights.
\end{prop}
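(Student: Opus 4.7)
The plan is to reduce the statement to the case of smooth projective $Y$ of dimension $\leq 2$ via the weight spectral sequence, and then invoke known cases of weight-monodromy.

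First, since $K$ is of characteristic zero, Hironaka's resolution theorem provides a smooth projective compactification $\bar Y$ of $Y$ such that $D := \bar Y \setminus Y$ is a strict normal crossings divisor. Writing $D^{(k)}$ for the disjoint union of the $k$-fold intersections of the irreducible components of $D$ (with $D^{(0)}:=\bar Y$), each $D^{(k)}$ is smooth and projective of dimension at most $2-k$. Passing, if necessary, to a finite extension of $K$ over which $\bar Y$ and $D$ admit a semistable model leaves the Weil--Deligne representations unchanged, so we may assume such a model exists (this is needed to make the relevant weight spectral sequence available in the $\ell=p$ case).

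The core input is then the Deligne weight spectral sequence
\[
E_1^{p,q} = \HH^{q-2p}_{\et}(D^{(p)}_{\bar K},\BQ_\ell)(-p) \;\Longrightarrow\; \HH^{p+q}_{\et}(Y_{\bar K},\BQ_\ell),
\]
viewed as a spectral sequence of Weil--Deligne representations. In the $\ell\neq p$ case this is the classical Rapoport--Zink / Deligne weight spectral sequence; in the $\ell=p$ case one uses instead the log-crystalline weight spectral sequence of Mokrane, compatible with the $\D_\pst$ functor. Each $E_1^{p,q}$ is the cohomology of a smooth projective variety of dimension at most $2$ (Tate-twisted), so I invoke weight-monodromy in that setting: trivial in dimension $0$; immediate from Grothendieck's local monodromy theorem plus semistable reduction in dimension $1$; and proved by Rapoport--Zink (for $\ell\neq p$) and Mokrane (for $\ell=p$) in dimension $2$. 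Consequently each $E_1^{p,q}$ is pure of weight $q$.

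With purity of the $E_1$-page in hand, the spectral sequence degenerates at $E_2$ by the standard weight argument: for $r\geq 2$, the differential $d_r$ is a morphism between pure Weil--Deligne representations of distinct weights, hence zero by Theorem~\ref{thm:tannakian}. Therefore $\HH^n_{\et}(Y_{\bar K},\BQ_\ell)$ admits a $\W$-strict filtration whose successive quotients are subquotients of the $E_1^{p,q}$ with $p+q=n$; by Proposition~\ref{mixed-extn} it is mixed, and all weights lie in the range $[n,2n]$ (since only $E_1^{p,q}$ with $p\geq 0$ and $q\geq 2p$ are nonzero), hence are non-negative.

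The main obstacle is the $\ell=p$ case: setting up the weight spectral sequence in log-crystalline cohomology, checking that it is compatible with the Weil--Deligne structure via $\D_\pst$, and reducing to a semistable model. The $\dim\leq 2$ hypothesis enters \emph{only} through the weight-monodromy conjecture for smooth projective $D^{(p)}$; if weight-monodromy were known in all dimensions, the same argument would prove the analogous statement without any dimension restriction.
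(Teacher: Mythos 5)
Your proof follows the same plan as the paper: compactify with simple normal crossings, reduce to the smooth projective case via the Gysin/Leray spectral sequence of Deligne for the open embedding $Y\hookrightarrow\overline Y$, invoke weight-monodromy for the smooth projective strata $D^{(p)}$ (Rapoport--Zink for $\ell\neq p$, Mokrane plus Tsuji's comparison theorem for $\ell=p$), and finish with the degeneration and mixed-extension arguments from Theorem~\ref{thm:tannakian} and Proposition~\ref{mixed-extn}. One terminological slip worth flagging: the displayed spectral sequence $E_1^{p,q}=\HH^{q-2p}(D^{(p)}_{\overline K},\BQ_\ell)(-p)$ is Deligne's Gysin/Leray spectral sequence for the SNC compactification; the Rapoport--Zink and Mokrane \emph{weight} spectral sequences are a different beast entirely, built from the special fibre of a semistable model over $\mathscr O_K$, and they enter only as the mechanism proving weight-monodromy for each smooth projective $D^{(p)}$ of dimension $\leq 2$. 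Consequently, no semistable model of $\overline Y$ is required to set up the SNC spectral sequence itself (in the $\ell=p$ case one simply applies the exact functor $\D_\pst$, using de Rham-ness of cohomology of smooth varieties), and your parenthetical remark that the finite extension is ``needed to make the relevant weight spectral sequence available'' points at the wrong spectral sequence. None of this affects the soundness of the argument, since at the crucial step you correctly cite Rapoport--Zink and Mokrane precisely where they are needed.
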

\begin{proof}
Let $\overline{Y}$ be a simple normal crossings compactification of $Y$, which exists by resolution of singularities.

For $\ell\not=p$, the case of smooth projective $Y$ with semistable reduction is proven for $\ell\not=p$ in \cite[Satz 2.13]{rapoport-zink}; the case $\ell=p$ is proven by Mokrane \cite[Corollaire 6.2.3]{mokrane} (Mokrane proves the log-crystalline statement; the statement here follows by applying the $p$-adic comparison theorem \cite[Theorem 0.2]{tsuji}). The case of arbitrary smooth proper $Y$ follows from Chow's lemma and de Jong's theory of alterations. Finally, the general case follows immediately from the Deligne spectral sequence, i.e.~the Leray spectral sequence associated to the embedding $Y\hookrightarrow \overline{Y}$ (see e.g. \cite[pg.~2]{jannsen-weights}), using Theorem~\ref{thm:tannakian} and Proposition~\ref{mixed-extn}.
\end{proof}

\begin{prop}\label{semisimple-H1}
Let $Y$ be any smooth $K$-variety.
\begin{enumerate}
\item $(\ell\not=p)$: $\varphi_K$ acts semi-simply on $\HH^1(Y_{\bar K, \et}, \BQ_\ell)$.  
\item $(\ell=p)$: $\varphi_K$ acts semi-simply on $\D_\pst(\HH^1(Y_{\bar K, \et}, \BQ_p))$.
\end{enumerate}
\end{prop}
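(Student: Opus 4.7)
The strategy is to reduce to the classical Frobenius-semisimplicity of $\ell$-adic Tate modules (and Dieudonn\'e modules) of abelian varieties, exploiting the structure theory of mixed Weil--Deligne representations developed above. As a first step, I would choose a smooth proper compactification $\overline Y$ of $Y$ whose boundary $D = \overline Y\setminus Y$ has simple normal crossings (by Hironaka), and write $\widetilde D$ for the normalisation of $D$. The low-degree part of the Gysin/localisation sequence gives a short exact sequence of Weil--Deligne representations
\[
0\rightarrow H^1(\overline Y_{\bar K},\BQ_\ell)\rightarrow H^1(Y_{\bar K},\BQ_\ell)\rightarrow Q\rightarrow 0,
\]
where $Q$ denotes the image of $H^1(Y_{\bar K},\BQ_\ell)\rightarrow H^0(\widetilde D_{\bar K},\BQ_\ell)(-1)$. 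Both outer terms are mixed: the first is pure of weight $1$ by the weight-monodromy theorem for $H^1$ of a smooth proper variety (Rapoport--Zink for $\ell\neq p$; Mokrane combined with Tsuji's $p$-adic comparison for $\ell = p$), while $H^0(\widetilde D_{\bar K},\BQ_\ell)(-1)$ is pure of weight $2$ tautologically. By Proposition~\ref{mixed-extn}, the middle term $H^1(Y_{\bar K},\BQ_\ell)$ is then itself mixed, with weights in $\{1,2\}$.

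Next, I would invoke the Corollary to Definition~\ref{def:canonical_splitting} to reduce Frobenius-semisimplicity of $H^1(Y_{\bar K},\BQ_\ell)$ to Frobenius-semisimplicity on the two graded pieces $\gr_1^\W = H^1(\overline Y_{\bar K},\BQ_\ell)$ and $\gr_2^\W = Q$. Since $Q$ sits inside $H^0(\widetilde D_{\bar K},\BQ_\ell)(-1)$, which up to a Tate twist is the permutation representation of $\Gal(\bar K/K)$ on the geometric connected components of $\widetilde D$, Frobenius-semisimplicity on $\gr_2^\W$ is automatic (permutation representations are semisimple in characteristic zero, and subrepresentations inherit this).

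For the weight-$1$ piece $H^1(\overline Y_{\bar K},\BQ_\ell)$, I would use the Albanese morphism $\overline Y\rightarrow\on{Alb}(\overline Y)=:A$, which induces a $\Gal(\bar K/K)$-equivariant isomorphism $H^1(A_{\bar K},\BQ_\ell)\isoarrow H^1(\overline Y_{\bar K},\BQ_\ell)$. Thus Frobenius-semisimplicity on this piece reduces to Frobenius-semisimplicity on the dual of the rational $\ell$-adic Tate module of the abelian variety $A/K$. For $\ell\neq p$ this is classical: Tate's semisimplicity theorem gives it over finite fields, and the general case follows via Grothendieck's semistable reduction theorem together with Galois descent. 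For $\ell = p$, the analogous semisimplicity of the crystalline Frobenius on $\D_\pst(V_p A)$ is similarly classical, coming from the Dieudonn\'e theory of abelian varieties with potentially semistable reduction.

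The genuine content of the proposition lies in this last ingredient --- the classical Frobenius-semisimplicity for abelian varieties over $p$-adic local fields; modulo that, the proof is a direct application of the structure theory of mixed Weil--Deligne representations developed in Section~\ref{section:WD-reps}.
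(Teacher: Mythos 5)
Your proof is correct, and it takes a mildly different route from the paper while sharing the same essential mechanism. Both proofs rest on the canonical splitting of the weight filtration (the paper invokes Definition~\ref{def:canonical_splitting} directly, you invoke its Corollary) to reduce Frobenius-semisimplicity to the weight-graded pieces, both identify the weight-$2$ graded with something of Tate type, and both ultimately rest on the classical Frobenius-semisimplicity attached to abelian varieties. Where you diverge: the paper first reduces to $Y$ a curve via the Lefschetz hyperplane theorem (after passing to an affine open), and then handles the weight-$1$ graded $\HH^1(\overline Y)$ via the Rapoport--Zink/Mokrane spectral sequence for a semistable model of $\overline Y$, reducing to cohomology of curves over the residue field; you instead work in arbitrary dimension, use the Gysin/localisation sequence together with Proposition~\ref{mixed-extn} to establish mixedness of $\HH^1(Y)$ directly, and handle the weight-$1$ graded via the Albanese morphism. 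Your use of the Albanese is arguably cleaner since it bypasses the reduction to curves and the semistable-model spectral sequence, at the modest cost of needing to know that $\HH^1(A_{\bar K})\isoarrow\HH^1(\overline Y_{\bar K})$ for the Albanese (or its torsor if no rational point exists --- this works because $\pi_1$ of an abelian variety is abelian, so the torsor ambiguity disappears, or one can extend $K$ since $\varphi_K$ is semisimple iff $\varphi_K^n$ is). One small point: the paper writes that $\HH^1(Y)/\W_1$ is a direct sum of copies of $\BQ_\ell(1)$; this should be $\BQ_\ell(-1)$ (weight $2$), which matches your description via $\HH^0(\widetilde D)(-1)$.
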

\begin{proof}
We may reduce to the case $Y$ is quasi-projective by replacing $Y$ with an affine open. By the Lefschetz hyperplane theorem, it suffices to prove this for $Y$ a curve.   Let $\overline{Y}$ by the smooth compactification of $Y$; after extending $K$, we may assume $\overline{Y}$ has semistable reduction, and that $\overline{Y}\setminus Y$ is a disjoint union of rational points of $Y$.

Then the result for $\overline{Y}$ smooth proper is immediate from the Rapoport--Zink spectral sequence (see \cite[Satz 2.10]{rapoport-zink} for the case $\ell\not=p$ and \cite[3.23]{mokrane} for the case $\ell=p$, again using the $p$-adic comparison theorem \cite[Theorem 0.2]{tsuji} to apply the statement) and the analogous fact for abelian varieties. To deduce the result for $Y$, note that $$\HH^1(\overline{Y}_{\bar K}, \BQ_\ell)\simeq \W_1\HH^1(Y_{\bar K}, \BQ_\ell)\to \HH^1(Y_{\bar K}, \BQ_\ell)$$ splits $\varphi_K$-equivariantly by Definition~\ref{def:canonical_splitting}; but $\HH^1(Y_{\bar K}, \BQ_\ell)/\W_1\HH^1(Y_{\bar K}, \BQ_\ell)$ is isomorphic to a direct sum of copies of $\BQ_\ell(-1)$, so the result follows.
\end{proof}
\subsection{Weight-monodromy for $\pi_1$}
We are now ready to prove Theorem~\ref{weight-monodromy-pi1}.

Let $Y$ be any smooth geometrically connected $K$-variety with simple normal crossings compactification $\overline{Y}$; let $x_1, x_2$ be rational tangential basepoints of $Y$. 

\begin{defn}[Weight filtration on $\Pi^\ell$]\label{weight-defn}
We define the weight filtration on $\Pi^\ell(Y_{\bar K}; \bar x_1, \bar x_2)$.  Let $$\mathscr{K}=\ker(\Pi^\ell(Y_{\bar K}; \bar x_1, \bar x_2)\to \Pi^\ell(\overline{Y}_{\bar K}; \bar x_1, \bar x_2\rrpara$$ $$\W_{-1}=\mathscr{I}$$ $$\W_{-2}=\mathscr{I}^2+\mathscr{K}$$ and in general, $$\W_{-i}\Pi^\ell(Y_{\bar K}; \bar x_1, \bar x_2)=\sum_{p+q=i, p,q>0} \W_{-p}\Pi^\ell(Y_{\bar K}; \bar x_1, \bar x_2)\cdot \W_{-q}\Pi^\ell(Y_{\bar K}; \bar x_2, \bar x_2)\text{ for } i>2.$$
\end{defn}

\begin{remark}\label{remark-weights}
As with any weight filtration arising in algebraic geometry, we claim the filtration defined above is uniquely characterized as follows. Let $R\subset K((t))$ be a finitely-generated $\mathbb{Z}$-algebra, $\mathscr{Y}$ an $R$-model of $Y$ (that is, a flat $R$-scheme equipped with an isomorphism $\mathscr{Y}_{K((t))}\simeq Y_{K((t))}$). After possibly enlarging $R$, we may let $y_1, y_2$ be $R$-points of $Y$ so that the fiber functors associated to $y_{i,\overline{K((t))}}$ are Galois-equivariantly isomorphic to those associated to those associated to $\bar x_i$. Then there exists an open subset $U$ of $\on{Spec}(R[1/\ell])$ such that for any closed point $\mathfrak{p}$ of $U$, the associated Frobenius element acts on $\gr_i^\W\Pi^\ell(Y_{\bar k(\mathfrak{p})};  y_{1,\bar k(\mathfrak{p})} , \bar y_{2, \bar k(\mathfrak{p})})$ with eigenvalues $\#\bar k(\mathfrak{p})$-Weil numbers of weight~$i$. Note that we spread out a model over $K((t))$, rather than over $K$, to deal with the case where the $x_i$ are rational tangential basepoints.

In particular, the induced filtration on $\mathscr{I}/\mathscr{I}^2$ agrees with the usual weight filtration coming from the identification with $\HH^1(Y_{\bar K}, \BQ_\ell)^\vee$ in Proposition~\ref{H1-abelianization} by construction; then the claim above follows by the multiplicativity of the weight filtration. 

We will use below the resulting compatibility with another description of the weight filtration arising from work of Deligne and Goncharov \cite{deligne-goncharov}.
\end{remark}
We now prove Theorem~\ref{weight-monodromy-pi1}. Before beginning the proof, we will recall the following crucial result of Deligne and Goncharov. Strictly speaking, Deligne and Goncharov only prove this in the topological setting but the proof works identically in the algebraic setting; we state the result in the form we need.
\begin{theorem}[{\cite[Proposition 3.4]{deligne-goncharov}}]\label{thm:deligne-goncharov-recollection}
Let $X$ be a smooth, geometrically connected variety over $K$, and let $\ell$ be a prime. Let $a, b$ be geometric points of $X$, and for $i\in  \{0, 1, \cdots, n\}$ let $Y_i\subset X^n$ be the subvariety given by $$Y_0:=\{(x_1, \cdots, x_n)\subset X^n\mid x_0=a\},$$ $$Y_i:=\{(x_1, \cdots, x_n)\subset X^n\mid x_i=x_{i+1} \text{ if } 1\leq i\leq n-1, \text{ and }$$ $$Y_n:=\{(x_1, \cdots, x_n)\subset X^n\mid x_n=b\}.$$ For $I\subset\{0, 1\cdots, n\}$ let $$Y_N:=\bigcap_{i\in I} Y_I.$$ Let $j_I: Y_I\to X^n$ be the natural inclusion and let $V_I:=(j_I)_*\mathbb{Q}_\ell$ be the pushforward of the constant lisse sheaf from $Y_I$ to $X^n$. Then there is a natural complex $$\mathscr{K}_{a, b}: \mathbb{Q}_\ell\to \bigoplus_{i\in \{0, \cdots, n\}} V_i\to \cdots \to \bigoplus_{I\subset \{0,\cdots, n\}, |I|=p} V_I\to \cdots \to V_{\{0,\cdots, n\}}\to 0$$
such that
\begin{itemize}
\item $\mathbb{H}^i(X^n, \mathscr{K}_{a,b})=0$ for $i<n$
\item $\mathbb{H}^n(X^n, \mathscr{K}_{a,b})= (\mathbb{Z}_\ell\llbrack\pi_1^\ell(X_{\bar K}; a, b)\rrbrack/\mathscr{I}^{n+1}\otimes \BQ_\ell)^\vee$
\end{itemize}
where $\mathbb{H}^i$ denotes the hypercohomology of the complexes above.
\end{theorem}
\begin{proof}[Proof of Theorem~\ref{weight-monodromy-pi1}]
We explain how to deduce the theorem from Proposition~\ref{Hi-mixed}. By Chow's lemma, we may assume $X$ is quasi-projective.

First, note that by the Lefschetz hyperplane theorem \cite[pg.~195]{goresky-macpherson} for fundamental groups, we may reduce to the case where $\dim(X)\leq 2$.

In the case $\ell\not=p$, recall from \cite[Proposition 3.4]{deligne-goncharov} (or Theorem \ref{thm:deligne-goncharov-recollection}) that $(\Pi^\ell(X_{\bar K}; \bar x_1, \bar x_2)/\mathscr{I}^n)^\vee$ may be computed as the hypercohomology of a complex of sheaves on $X^{n-1}$; each of these sheaves is a direct sum of sheaves of the form $j_*\BQ_\ell,$ where $j\colon X^m\to X^{n-1}$ is a closed embedding. Thus, by Proposition~\ref{Hi-mixed}, there is a spectral sequence whose $E^1$ term consists of mixed Weil--Deligne representations of the form $\HH^i(X^m, \mathbb{Z}_\ell)$, where the weight filtration comes from the usual weight filtration on cohomology \cite{deligne-hodge-i} (using the K\"unneth formula), and whose $E^\infty$ page has on it $\on{gr}_{\mathscr{F}^\bullet}(\Pi^\ell(X_{\bar K}; \bar x_1, \bar x_2)/\mathscr{I}^n)^\vee$ for some filtration $\mathscr{F}^\bullet$. Now we may conclude the result by Theorem~\ref{thm:tannakian} and Proposition~\ref{mixed-extn}. 

In the case $\ell=p$, we may conclude once we know that the spectral sequence indeed converges after applying $\D_\pst$, which follows as $\mathbb{Z}_p\llbrack\pi_1^p(X_{\bar K}; \bar x_1, \bar x_2)\rrbrack/\mathscr{I}^n\otimes \BQ_p$ is de Rham (hence potentially semistable) by Remark \ref{de-rham-remark}.
\end{proof}
\subsection{Semisimplicity}
We now begin preparations for the proof of Theorem~\ref{semi-simplicity-l-adic}.  The canonical splitting of the weight filtration from Definition~\ref{def:canonical_splitting} induces a $\Weil_K$-equivariant splitting of the natural quotient map $$\Pi^\ell(Y_{\bar K}; \bar x_1, \bar x_2)\to \Pi^\ell(Y_{\bar K}; \bar x_1, \bar x_2)/\mathscr{I}.$$
\begin{defn}[Canonical Paths]\label{defn:can-paths}
We denote the image of $1$ under this splitting by $p(x_1, x_2)$. This is $\Weil_K$-invariant by construction -- in the case $\ell=p$ it is moreover invariant under the crystalline Frobenius.
\end{defn}

\begin{prop}
Let $x_1, x_2, x_3$ be rational points or rational tangential basepoints of $X$.  Then 
\begin{enumerate}
\item\label{proppart:identity_paths} $p(x_1, x_1)=1$, and
\item\label{proppart:path-composition} $p(x_2, x_3) \circ p(x_1, x_2)=p(x_1, x_3)$.
\end{enumerate}
\end{prop}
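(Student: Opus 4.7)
The plan is to derive both claims from the uniqueness, $\Weil_K$-equivariance, functoriality, and tensor-compatibility of the canonical splitting furnished by Lemma~\ref{lem:weak_morphisms} and Definition~\ref{def:canonical_splitting}; everything else is formal.

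For the first claim, I would observe that the unit $1$ of the completed group algebra $\Pi^\ell(Y_{\bar K}; \bar x_1, \bar x_1)$ is $\Weil_K$-invariant, lies in $\W_0$, and reduces to the canonical generator of $\Pi^\ell(Y_{\bar K}; \bar x_1, \bar x_1)/\mathscr{I} \cong \BQ_\ell$. Moreover $N(1) = 0$: for $\ell \neq p$ this holds because $1$ is $G_K$-fixed, so that $\exp(t^{-1} t_\ell(g) N)(1) = 1$ for $g \in I_L$ with nontrivial tame image; for $\ell = p$ it follows from the fact that the unit is annihilated by the crystalline monodromy. Consequently $N^r(1) = 0$ for all $r \geq 1$, so all the conditions of Lemma~\ref{lem:weak_morphisms} are satisfied trivially, and uniqueness of the canonical lift gives $p(x_1, x_1) = 1$.

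For the second claim, I would use the path-composition map
\[
c \colon \Pi^\ell(Y_{\bar K}; \bar x_2, \bar x_3) \otimes \Pi^\ell(Y_{\bar K}; \bar x_1, \bar x_2) \to \Pi^\ell(Y_{\bar K}; \bar x_1, \bar x_3)
\]
induced by composition of \'etale paths. This is $\Weil_K$-equivariant and intertwines the monodromy operators by functoriality, and it preserves the weight filtration: the inclusion $\W_{-p} \cdot \W_{-q} \subseteq \W_{-(p+q)}$ is built into Definition~\ref{weight-defn}, and the kernel $\mathscr{K}$ to the path torsors of the compactification $\overline{Y}$ is functorial in $(\bar x_1, \bar x_2, \bar x_3)$. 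Hence $c$ is a morphism of mixed Weil--Deligne representations, and on $\gr^\W_0$ it sends $1 \otimes 1$ to $1$. Tensor-compatibility of the canonical splitting identifies the canonical lift of $1 \otimes 1$ on the source with $p(x_2, x_3) \otimes p(x_1, x_2)$, while functoriality of the splitting sends this to the canonical lift of $1$ on the target, namely $p(x_1, x_3)$; comparing yields the claimed composition law.

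The only point requiring genuine attention, beyond the formal invocation of the structure theory in Section~\ref{section:WD-reps}, is the verification that $c$ is filtered, and this is dispatched by the two observations just made (multiplicativity of $\W$ and functoriality of the kernel to the compactification).
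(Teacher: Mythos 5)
Your proposal is correct and follows essentially the same route as the paper's (very terse) proof: part~(1) is read off from the uniqueness in Lemma~\ref{lem:weak_morphisms}, and part~(2) is exactly the tensor-compatibility and functoriality of the canonical splitting applied to the path-composition map. Your write-up merely fills in the supporting checks the paper leaves implicit, namely that $N(1)=0$ so $1$ satisfies the characterizing conditions of Definition~\ref{def:canonical_splitting}, and that the composition map $c$ is a morphism of mixed Weil--Deligne representations (filtered, $\Weil_K$-equivariant, and $N$-intertwining), which by strictness (Theorem~\ref{thm:tannakian}) is all that is needed for the formal argument to go through.
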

\begin{proof}
\eqref{proppart:identity_paths} is immediate from the definition; \eqref{proppart:path-composition} follows from compatibility with tensor products.
\end{proof}

\begin{remark}
In the case $\ell=p$, the paths $p(x_1,x_2)$ are Vologodsky's canonical $p$-adic paths \cite[Proposition~29]{vologodsky}. In the case $\ell\neq p$, the paths $p(x_1,x_2)$ are the canonical $\ell$-adic paths $\gamma_{x_1,x_2}^{\mathrm{can}}$ from \cite[Remark~2.2.5]{alex-netan}.

\end{remark}

\begin{proof}[Proof of Theorem~\ref{semi-simplicity-l-adic}]
This is a more involved variant of \cite[Theorem 2.12]{litt}.

By the Lefschetz hyperplane theorem for fundamental groups, we may without loss of generality assume $\dim(X)\leq 1$. Indeed, there exists a smooth curve $C\hookrightarrow X$ such that the induced map on fundamental groups is a surjection; it suffices to prove the theorem for $C$. So we assume $\dim(X)=1$ and let $\overline{X}$ be the connected smooth proper curve compactifying $X$. Let $D=\overline{X}\setminus X$.

Without loss of generality (by replacing $K$ with a finite extension) we may assume $D=\{x_1, \dots, x_n\}$, with the $x_i\in \overline{X}(K)$ rational points of $\overline{X}$.

Recall that if $\ell\not=p$, we have fixed a Frobenius element $\varphi_K\in \text{Gal}(\bar K/K)$; if $\ell=p$, we let $\varphi_K=\varphi^{f(K/\BQ_p)}$ be the smallest power of the crystalline Frobenius which is $K$-linear (so the geometric Frobenius of the underlying Weil--Deligne representation).

We first claim that it suffices to prove the theorem when $x_1=x_2$. Indeed, suppose we know the theorem for $x_1$. Then composition with $p(x_1, x_2)$ is a $\varphi_K$-equivariant isomorphism $$\Pi^\ell(X_{\bar K}, \bar x_1)\overset{\sim}{\longrightarrow}\Pi^\ell(X_{\bar K}; \bar x_1, \bar x_2).$$ So $\varphi$ acts semisimply on $\Pi^\ell(X_{\bar K}; \bar x_1, \bar x_2)$.  Hence we may and do assume $x_1=x_2=x$ for the rest of the proof.

We now claim it suffices to show that the quotient map $$\mathscr{I}\to \mathscr{I}/\mathscr{I}^2$$ splits $\varphi_K$-equivariantly.  Indeed, let $s\colon \mathscr{I}/\mathscr{I}^2\to \mathscr{I}$ be such a splitting; then the map $$\bigoplus_n (\mathscr{I}/\mathscr{I}^2)^{\otimes n}\overset{\bigoplus s^{\otimes n}}{\longrightarrow} \Pi^\ell(X_{\bar K}; \bar x_1, \bar x_2)$$ has dense image.  But $\varphi_K$ acts semi-simply on $(\mathscr{I}/\mathscr{I}^2)$ by Propositions~\ref{H1-abelianization} and~\ref{semisimple-H1}, so we may conclude the theorem.

We now construct such a $\varphi_K$-equivariant splitting $s$.

{\bf Step 1.} We first construct a splitting of the quotient map $$\mathscr{I}\to \mathscr{I}/\W_{-2}.$$  But this map splits $\varphi_K$-equivariantly by the formula in Definition~\ref{def:canonical_splitting}; choose any $\varphi_K$-equivariant splitting $s_1$.

{\bf Step 2.} We now construct a splitting of the map $$\W_{-2}\Pi^\ell(X_{\bar K}, \bar x)\to \W_{-2}/\mathscr{I}^2.$$

For each $i=1, \dots, n$, we choose a rational tangential basepoint $y_i\in X(K\llpara t\rrpara)$ so that the associated $K\llbrack t\rrbrack$-point of $\overline{X}$ (obtained via the valuative criterion for properness) specializes to $x_i$.  Let $p_i=p(x, y_i)$ be the canonical path arising from Definition~\ref{defn:can-paths}.  

$(\ell\not=p)$: Let $\gamma$ be a topological generator of $$\Gal(\overline{K\llpara t\rrpara}/\overline{K}\llpara t\rrpara)^\ell=\pi_1^\ell(\on{Spec}(\overline{K}\llpara t\rrpara, \on{Spec}(\overline{K\llpara t\rrpara}))=\mathbb{Z}_\ell(1).$$  Then the maps $$\iota_i\colon y_i\to X$$ induce maps $$\iota_{i*}\colon \pi_1^\ell(\on{Spec}(\overline{K}\llpara t\rrpara, \on{Spec}(\overline{K\llpara t\rrpara}))\to \pi_1^{\ell}(X_{\bar K}, \bar y_i).$$  

Let $\gamma_i$ be the image of $\iota_{i*}(\gamma)-1$ in $\W_{-2}/\mathscr{I}^2$.  Then the map $$\iota_*\colon \BQ_\ell(1)^{\{x_1, \cdots, x_n\}}\to \W_{-2}/\mathscr{I}^2$$ $$(a_1,\cdots, a_n)\gamma\mapsto \sum a_i \gamma_i$$ is surjective and $\varphi_K$-equivariant; as $\varphi_K$ acts semisimply on $\BQ_\ell(1)^{\{x_1, \cdots, x_n\}}$, $\iota_*$ splits $\varphi_K$-equivariantly, so it suffices to construct a $\varphi_K$-equivariant map $$\tilde s_2\colon \BQ_\ell(1)^{\{x_1, \dots, x_n\}}\to \mathscr{I}$$ such that the diagram
$$\xymatrix{
& \BQ_\ell(1)^{\{x_1, \cdots, x_n\}} \ar[d]^{\iota_*} \ar[dl]_{\tilde s_2}\\
\W_{-2} \ar@{->>}[r] & \W_{-2}/\mathscr{I}^2
}$$ 
commutes.  

Set $\tilde s_2$ to be the map $$\tilde s_2\colon (a_1, \cdots, a_n)\gamma\mapsto \sum_{i=1}^n a_i p_i \cdot \log(\iota_*\gamma)\cdot p_i^{-1}$$

A direct computation shows that this gives the desired section; see the proof of \cite[Theorem 2.12, p.~621-622]{litt} for an identical computation.

Finally, let $p$ be any section to $\iota_*$; then we set $s_2=\tilde s_2\circ p$.

$(\ell=p)$: Let $\beta$ be a topological generator of $\mathbb{Z}_p(1)$ and let $$\Pi^p(y_i):=\varprojlim_n \D_\st(\mathbb{Z}_p\llbrack\mathbb{Z}_p(1)\rrbrack/(\beta-1)^n\otimes \BQ_p).$$

Now the Weil--Deligne representation $K(1):=\D_\st(\BQ_p(1\rrpara$ has $\varphi_K$-action given by multiplication by $q^{-1}=(\# k)^{-1}$ and $N\equiv 0$. As $(\beta-1)/(\beta-1)^2\simeq \mathbb{Z}_p(1)$, there is a $\varphi$-equivariant isomorphism $$\Pi^p(y_i)\simeq \prod_{i\geq 0} K(i),$$ where $K(i)$ is the $\varphi_K$-module $K(1)^{\otimes i}$. Let $\gamma$ be an element of $\Pi^p(y_i)$ such that $\varphi_K(\gamma)=q\gamma$. Note that this element $\gamma$ plays the role of $\log(\gamma)$ in the $(\ell\not=p)$ case of the group above; in particular, it is a primitive element of the Hopf algebra $\Pi^p(y_i)$ rather than a group-like element.

The map $\iota_i$ induces a map $$\iota_{i*}\colon \Pi^p(y_i)\to \Pi^p(X, \bar y_i);$$ let $\gamma_i=\iota_{i*}(\gamma).$

Now the map $$\iota_*\colon K(1)^{\{x_1, \cdots, x_n\}}\to \W_{-2}/\mathscr{I}^2$$ $$(a_1, \cdots, a_n)\mapsto \sum a_i \gamma_i$$

is surjective and $\varphi_K$-equivariant; as $\varphi_K$ acts semisimply on $K(1)^{\{x_1, \cdots, x_n\}}$, $\iota_*$ splits $\varphi_K$-equivariantly, so it suffices to construct a $\varphi_K$-equivariant map $$\tilde s_2\colon K(1)^{\{x_1, \cdots, x_n\}}\to \mathscr{I}$$ such that the diagram
$$\xymatrix{
& K(1)^{\{x_1, \cdots, x_n\}} \ar[d]^{\iota_*} \ar[dl]_{\tilde s_2}\\
\W_{-2} \ar@{->>}[r] & \W_{-2}/\mathscr{I}^2
}$$ 
commutes.  

Set $\tilde s_2$ to be the map $$\tilde s_2\colon (a_1, \cdots, a_n)\gamma\mapsto \sum_{i=1}^n a_i p_i \cdot\gamma_i\cdot p_i^{-1}$$
Again, this gives the desired section by an argument identical to the proof of \cite[Theorem 2.12, p.~621-622]{litt}.

Finally, let $p$ be any section to $\iota_*$; then we set $s_2=\tilde s_2\circ p$.

{\bf Step 3.}  We now construct the desired $\varphi_K$-equivariant section $$s\colon \mathscr{I}/\mathscr{I}^2\to \mathscr{I}.$$namely, $$s\colon v\mapsto s_1(v\bmod \W_{-2})+s_2(v-(s_1(v\bmod \W_{-2})\bmod \mathscr{I}^2)).$$

This is $\varphi_K$-equivariant because the same is true for $s_1, s_2$, and is a section by direct computation.  This completes the proof.
\end{proof}

\newcommand\ind{\mathrm{ind}}
\newcommand\hw{\mathrm{h.w.}}
\newcommand\Conil{C}
\newcommand\fin{\mathrm{fin}}
\newcommand\Tann{\mathcal T}
\newcommand\fiber{\ambrit{fiber}{fibre}\xspace}
\newcommand\can{\mathrm{can}}
\newcommand\LLambda{\mathbf{\Lambda}}
\newcommand{\MF}{\mathbf{MF}}
\newcommand{\adm}{\mathrm{w.a.}}
\newcommand\F{F}

\section{Structure of local Bloch--Kato Selmer schemes}\label{s:geometry}

We now turn to our application of these results to the study of local Bloch--Kato Selmer schemes. As before, let $K$ be a finite extension of $\BQ_p$, and let $U$ be a $\BQ_p$-pro-unipotent group endowed with a continuous action of $G_K$ (see Remark~\ref{rmk:topologies}). We assume moreover that $U$ is de Rham. We have in mind that $U$ is the $\BQ_p$-pro-unipotent \'etale fundamental group of a smooth geometrically connected variety, but will not assume this in what follows.

One can associate to $U$ a \emph{continuous Galois cohomology presheaf} $\HH^1(G_K,U)$ of pointed sets on the category $\Aff_{\BQ_p}$ of affine $\BQ_p$-schemes, namely the presheaf whose sections over some $\Spec(\Lambda)$ is the non-abelian Galois cohomology set $\HH^1(G_K,U(\Lambda))$. The \emph{local Bloch--Kato Selmer presheaves} (cf.\ \cite[\S2]{minhyong:siegel} \& \cite[Definition~1.2.1]{motivic_anabelian_heights}) are three sub-presheaves
\begin{equation}\label{eq:BK_presheaves}
\HH^1_e(G_K,U)\subseteq\HH^1_f(G_K,U)\subseteq\HH^1_g(G_K,U)\subseteq\HH^1(G_K,U)
\end{equation}
whose sections over some $\Spec(\Lambda)$ are given by
\begin{align*}
\HH^1_e(G_K,U)(\Lambda) &= \ker\left(\HH^1(G_K,U(\Lambda))\rightarrow\HH^1(G_K,U(\B_\cris^{\varphi=1}\otimes\Lambda))\right)\,, \\
\HH^1_f(G_K,U)(\Lambda) &= \ker\left(\HH^1(G_K,U(\Lambda))\rightarrow\HH^1(G_K,U(\B_\cris\otimes\Lambda))\right)\,, \\
\HH^1_g(G_K,U)(\Lambda) &= \ker\left(\HH^1(G_K,U(\Lambda))\rightarrow\HH^1(G_K,U(\B_\dR\otimes\Lambda))\right)\,.
\end{align*}

\begin{remark}[on topologies]\label{rmk:topologies}
Throughout this section, we will adhere to the standard conventions regarding topologies on $\BQ_p$-linear objects. A finite-dimensional $\BQ_p$-vector space will be endowed with its natural $p$-adic topology. A general $\BQ_p$-vector space, for instance $\OO(U)$ or a $\BQ_p$-algebra $\Lambda$ will be endowed with the inductive (co)limit topology over its finite-dimensional subspaces. A pro-finite-dimensional $\BQ_p$-vector space -- i.e.\ a pro-object in the category of finite-dimensional $\BQ_p$-vector spaces, such as $\Lie(U)$ -- will be endowed with the inverse limit topology over its finite-dimensional quotients. More generally, if $V=\varprojlim_i(V_i)$ is a pro-finite-dimensional $\BQ_p$-vector space, with each $V_i$ finite-dimensional, then for each $\BQ_p$-algebra $\Lambda$ we endow the base change $V_\Lambda:=\varprojlim_i(\Lambda\otimes_{\BQ_p}V_i)$ with the inverse limit of the inductive limit topologies on each $\Lambda\otimes_{\BQ_p}V_i$.

If $U$ is a $\BQ_p$-pro-unipotent group and $\Lambda$ is a $\BQ_p$-algebra, then the logarithm map provides a bijection $U(\Lambda)\isoarrow\Lie(U)_\Lambda$, and we topologi\sz e the set $U(\Lambda)$ by declaring this map to be a homeomorphism. We say that an action of a pro-finite group $G$ on $U$ is \emph{continuous} just when $G$ acts continuously on $U(\Lambda)$ for all $\BQ_p$-algebras $\Lambda$ (equivalently, $G$ acts continuously on $\Lie(U)$ or $\OO(U)$ \cite[Definition--Lemma~4.0.1]{motivic_anabelian_heights}). We say that $U$ is \emph{de Rham} just when $\Lie(U)$ is pro-de Rham (equivalently $\OO(U)$ is ind-de Rham \cite[Definition--Lemma~4.2.2]{motivic_anabelian_heights}).

Though the period rings $\B_\cris$, $\B_\st$ and $\B_\dR$ also carry topologies, none of our definitions or results here depend on these topologies.
\end{remark}

An argument of Kim establishes that the Bloch--Kato Selmer presheaf $\HH^1_f(G_K,U)$ is representable when $U$ is the maximal $n$-step unipotent quotient of the $\BQ_p$-pro-unipotent \'etale fundamental group of a smooth projective curve. The argument only uses a condition on the weights of $U$.

\begin{defn}\label{def:negative_weights}
We say that $U$ is \emph{mixed with only negative weights} just when $\OO(U)$ is endowed with an exhaustive filtration
\[
\BQ_p=\W_0\OO(U)\leq\W_1\OO(U)\leq\dots\leq\OO(U)
\]
by $G_K$-stable subspaces, stable under the Hopf algebra structure maps, such that $\gr^\W_i\OO(U)$ is pure of weight $i$ for all $i\geq0$. Equivalently, $\Lie(U)$ is endowed with a separated filtration
\[
\dots\leq\W_{-2}\Lie(U)\leq\W_{-1}\Lie(U)=\Lie(U)
\]
by $G_K$-stable pro-finite-dimensional subspaces, stable under the Lie algebra structure maps, such that $\gr^\W_{-i}\Lie(U)$ is pure of weight $-i$ for all $i>0$.
\end{defn}

\begin{lemma}[{\cite[Lemma~5]{minhyong:selmer}}]\label{lem:H^1_f}
Assume that $U$ is mixed with only negative weights. Then $\HH^1_f(G_K,U)$ is representable by an affine scheme over $\BQ_p$.
\end{lemma}

In fact, \cite[Proposition~1.4]{kim:tangential} gives a precise description of the representing scheme. Let $\D_\dR(U)$ be the pro-unipotent group over $K$ representing the presheaf
\[
\Spec(\Lambda)\mapsto\D_\dR(U)(\Lambda):= U(\B_\dR\otimes_K\Lambda)^{G_K}
\]
where $\B_\dR$ is the de Rham period ring (see \cite[Lemma~4.2.1]{motivic_anabelian_heights}). The argument of Kim establishes that in the setup of Lemma~\ref{lem:H^1_f} there is an isomorphism
\[
\HH^1_f(G_K,U)\cong\Res^K_{\BQ_p}\left(\D_\dR(U)/\F^0\right)
\]
of presheaves on $\Aff_{\BQ_p}$ (where the right-hand side denotes the presheaf quotient of $\D_\dR(U)$ by the right-multiplication action of the subgroup $\F^0\D_\dR(U)$ corresponding to the $0$th step of the Hodge filtration on $\Lie(U)$). In particular, the representing variety is an affine space: if one chooses a splitting $\D_\dR(\Lie(U))=V\oplus\F^0$ of the Hodge filtration on the Lie algebra of $U$, then there is an isomorphism
\[
\Res^K_{\BQ_p}V \cong \Res^K_{\BQ_p}\left(\D_\dR(U)/\F^0\right)
\]
of presheaves on $\Aff_{\BQ_p}$, where by abuse of notation we also denote by $V$ the associated affine space $\Spec(\Lambda)\mapsto V_\Lambda$.

Our aim in this section is to extend this to descriptions of all three Bloch--Kato Selmer presheaves, and in particular to show that under the same assumptions on the weights, they are all also represented by affine spaces. In fact, by imitating the arguments in \cite{alex-netan} we will obtain descriptions of the Bloch--Kato Selmer presheaves as (the affine spaces underlying) vector spaces; these descriptions are all canonical, up to the above choice of splitting of the Hodge filtration.

\begin{theorem}\label{thm:representability}
Let $U$ be a $\W$-filtered de Rham representation of $G_K$ on a finitely generated pro-unipotent group over $\BQ_p$, which is mixed with negative weights. Then there are \emph{canonical} natural isomorphisms
\begin{align*}
\HH^1_e(G_K,U) &\cong \Res^K_{\BQ_p}\left(\D_\dR(U)/\F^0\right) \\
\HH^1_f(G_K,U) &\cong \Res^K_{\BQ_p}\left(\D_\dR(U)/\F^0\right) \\
\HH^1_g(G_K,U) &\cong \Res^K_{\BQ_p}\left(\D_\dR(U)/\F^0\right)\times \V(U)^{\varphi=1}
\end{align*}
of presheaves for a certain $\varphi$-module $\V(U)$ functorially assigned to $U$ (for a precise description, see below). These descriptions are compatible with the inclusions~\eqref{eq:BK_presheaves}.

In particular all three presheaves are representable by affine spaces, and the dimension of these spaces is given by
\begin{align*}
\dim_{\BQ_p}\HH^1_e(G_K,U) &= [K:\BQ_p]\sum_{i>0}\left(\dim_K\D_\dR(\gr^\W_{-i}U)-\dim_K\F^0\D_\dR(\gr^\W_{-i}U)\right) \\
\dim_{\BQ_p}\HH^1_f(G_K,U) &= [K:\BQ_p]\sum_{i>0}\left(\dim_K\D_\dR(\gr^\W_{-i}U)-\dim_K\F^0\D_\dR(\gr^\W_{-i}U)\right) \\
\dim_{\BQ_p}\HH^1_g(G_K,U) &= [K:\BQ_p]\sum_{i>0}\left(\dim_K\D_\dR(\gr^\W_{-i}U)-\dim_K\F^0\D_\dR(\gr^\W_{-i}U)\right) \\
 &\:\:+\sum_{i>0}\dim_{\BQ_p}\D_\cris^{\varphi=1}((\gr^\W_{-i}U)^\dual(1))
\end{align*}
where the right-hand side is the sum of the dimensions of the Bloch--Kato Selmer groups. If $*\in\{e,f\}$, or if $*=g$ and $U$ is Frobenius-semisimple, the same holds for the descending central series in place of the weight filtration.
\end{theorem}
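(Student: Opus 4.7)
My plan is to extend Kim's argument for $\HH^1_f$ to all three local conditions by a d\'evissage to the abelian case, and then to use the canonical splitting of the weight filtration from Definition~\ref{def:canonical_splitting} both to handle $\HH^1_g$ and to ensure the canonicity of all three identifications. First I would induct on the nilpotency class of $U$ via the short exact sequence $1 \to Z \to U \to U/Z \to 1$, where $Z$ is the last nontrivial term of the descending central series (hence central, abelian, and mixed with negative weights). Taking the associated long exact sequence of non-abelian Galois cohomology, sheafified over $\Aff_{\BQ_p}$, for each of $* \in \{e,f,g\}$, and using the vanishing $\HH^0(G_K, U/Z) = \{1\}$ that follows from strict negativity of weights, this reduces everything to the case where $V := \Lie(U)$ is a mixed de Rham representation with strictly negative weights.

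In the abelian case, Kim's proof of Lemma~\ref{lem:H^1_f} adapts directly: the Bloch--Kato fundamental exact sequence, tensored with $V$ and combined with the vanishings $\HH^0(G_K, V) = 0$ and $\D_\cris(V)^{\varphi=1} = 0$ (each a consequence of the fact that no $q$-Weil number of strictly negative weight equals $1$), yields $\HH^1_f(G_K, V) \cong \D_\dR(V)/\D_\dR^+(V)$. The same vanishing of $\D_\cris(V)^{\varphi=1}$ forces $\HH^1_e(G_K, V) = \HH^1_f(G_K, V)$ via the Bloch--Kato sequence $0 \to \BQ_p \to \B_\cris^{\varphi=1} \to \B_\dR/\B_\dR^+ \to 0$. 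For $\HH^1_g$, the extra contribution relative to $\HH^1_f$ comes from the monodromy sequence $0 \to \B_\cris \to \B_\st \xrightarrow{N} \B_\st \to 0$; unraveling the resulting boundary map produces a residual factor that one identifies with $\Vge(V)^{p\varphi=1}$, noting that the Tate twist on $\D_\cris^{\varphi=1}(V^\dual(1))$ translates the eigenvalue condition $\varphi = 1$ on $V^\dual(1)$ into the condition $p\varphi = 1$ on the relevant piece of $V$.

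The main obstacle will be obtaining the \emph{canonical} identifications---both to make the d\'evissage functorial in $U$ and to produce an honest product decomposition $\HH^1_g \cong \HH^1_e \times \Vge(U)^{p\varphi=1}$ rather than merely an $\HH^1_e$-torsor structure. Here the canonical splitting from Definition~\ref{def:canonical_splitting}, applied to $\D_\pst(\Lie(U))$, plays the essential role: it provides a $\Weil_K$-equivariant section of the weight filtration which turns the d\'evissage into a canonical product decomposition, and which makes the resulting isomorphisms functorial in $U$. The dimension formulas then follow from additivity of the right-hand descriptions over the weight graded pieces, using Theorem~\ref{thm:structure} to keep track of the $\Vge$ contribution. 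Finally, the last assertion about the descending central series reduces to additivity: for $* \in \{e,f\}$ the formula involves only the Hodge invariants $\dim_K \D_\dR$ and $\dim_K \D_\dR^+$ of $\Lie(U)$, which are additive in any exact sequence and so independent of the filtration; for $* = g$ under the Frobenius-semisimplicity hypothesis, the eigenspace functor $\D_\cris^{\varphi=1}(-)$ becomes exact, so the sum over graded pieces is again filtration-independent.
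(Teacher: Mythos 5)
Your strategy---d\'evissage on the nilpotency class \`a la Kim, reduction to the abelian case, and the Bloch--Kato fundamental exact sequences---is genuinely different from the paper's. The paper does not perform a d\'evissage on $U$ at the level of Galois cohomology. Instead, it first establishes an explicit description of $\HH^1_g(G_K,U)$ as a scheme-theoretic quotient $\Cycles^1_g(G_K,U)/(\D_\dR^+(U)\times\D_\st(U))$ built out of the $\varphi$-module $\D_\st(U)$ and its monodromy cocycle $\xi_N$ (Theorem~\ref{thm:explicit_descriptions}, proved via the cosimplicial period rings of \cite{motivic_anabelian_heights}), and only then applies the structure theorem~\ref{thm:structure} on the $\varphi$-module side (Propositions~\ref{prop:prep_e}--\ref{prop:prep_g}) to exhibit a fundamental domain for the group action. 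The product decomposition $\HH^1_g\cong\HH^1_e\times\Vge(U)^{p\varphi=1}$ is read off directly from that fundamental domain.

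The gap in your approach is precisely at the point you flag as ``the main obstacle.'' You invoke the canonical splitting of Definition~\ref{def:canonical_splitting} to upgrade the d\'evissage fibration to a canonical product, but you do not explain the mechanism, and I do not see how it works as stated. The canonical splitting is an isomorphism of $\Weil_K$- and $\varphi$-modules splitting the weight filtration on $\D_\pst(\Lie(U))$; it is \emph{not} a splitting in the category of Galois representations (it is not even $N$-equivariant in general). Your d\'evissage, however, runs inside continuous nonabelian Galois cohomology $\HH^1(G_K,-)$. The long exact sequence there only presents $\HH^1_g(G_K,U)$ as an iterated fibration with $\HH^1_g(G_K,Z)$-torsor fibers over $\HH^1_g(G_K,U/Z)$; splitting this fibration canonically requires, after twisting, a canonical section at each stage, and the $\varphi$-equivariant splitting of the weight filtration on $\D_\pst(\Lie(U))$ does not by itself furnish one on the Galois side. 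What the paper does to make this work is first rewrite $\HH^1_g(G_K,U)$ purely in terms of $(\D_\dR(U),\D_\st(U),\varphi,N)$-data, so that the structure theorem \emph{does} directly apply to the cocycle scheme; your outline lacks this intermediate explicit description. Relatedly, the identification of the abelian-case ``residual factor'' $\HH^1_g/\HH^1_f$ with $\Vge(V)^{p\varphi=1}$ is asserted (``unraveling the resulting boundary map produces a residual factor that one identifies with...'') but not derived; the Bloch--Kato monodromy sequence gives you the quotient's dimension, not a canonical direct summand. For $*\in\{e,f\}$ your plan essentially reproduces Kim's Lemma~\ref{lem:H^1_f} and is sound; the closing remarks on dimension formulas and on the descending central series are also essentially what the paper says. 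But for $\HH^1_g$, the canonical product decomposition---which is the new content of the theorem---is not established by the argument as written.
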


\begin{example}
Suppose that $X/K$ is a smooth projective curve of genus $g$ with semistable reduction, and that all irreducible components of the geometric special fibre of the minimal regular model of $X$ are defined over the residue field $k$. Let $Y=X\setminus\{x\}$ for a point $x\in X(K)$ and let $U_n/\BQ_p$ denote the maximal $n$-step unipotent quotient of the $\BQ_p$-pro-unipotent \'etale fundamental group of $Y_{\overline K}$ (at a basepoint $b\in Y(K)$). Then we have
\begin{align*}
\dim_{\BQ_p}\HH^1_g(G_K,U_n) &= [K:\BQ_p]\cdot\bigl(L_{\leq n}(2g)-L_{\leq n}(g)\bigr) - L_{\leq n}(g_0) + \frac{g_0^{n+1}-g_0}{g_0-1} \\
 &\:\:+ \frac{(n-1)g_0^n-ng_0^{n-1}+1}{2(g_0-1)^2}\cdot\sum_\lambda\nu_\lambda^2 - \frac{g_0^{\lfloor n/2\rfloor}-1}{2(g_0-1)}\cdot(\nu_{\sqrt q}+\nu_{-\sqrt q}) \,, \\
\end{align*}
where $g_0$ is the genus of the reduction graph of $X$, $\nu_\lambda$ are the multiplicities of the weight $1$ eigenvalues of $\varphi_K$ acting on $\HH^1_\et(X_{\overline K},\BQ_p)$, and $L_{\leq n}(T) := \sum_{1\leq i\leq n}\sum_{d\mid i}\frac{\mu(d)}iT^{i/d}$ is the summed necklace polynomial (the number of Lyndon words of length $\leq n$ in an alphabet of $T$ letters).
\begin{proof}[Proof (sketch)]
We will calculate the $\BQ_p$-dimension of $\D_\cris^{\varphi=1}((\gr^\W_{-i}U_n)^\dual(1))$ (for $i\leq n$), leaving the remainder of the calculation to the reader. This $\BQ_p$-dimension is equal to the $K_0$-dimension of the subspace of $\D_\st(\gr^\W_{-i}U_n^\dual)$ on which $\varphi_K$ acts via $q$ and $N$ acts by $0$. Since $U_n$ is a free $n$-step unipotent group, $\D_\st(\gr^\W_{-i}U_n^\dual)$ has a basis parametrised by Lyndon words in a basis of $\D_\st(\HH^1_\et(X_{\overline K},\BQ_p))$.

By semisimplicity, we may pick a basis of $\overline K\otimes_{K_0}\D_\st(\HH^1_\et(X_{\overline K},\BQ_p))$ consisting of $\varphi_K$-eigenvectors. Exactly $g_0$ of the corresponding eigenvalues are equal to $1$ and $g_0$ are equal to $q$; all the remaining eigenvalues are $q$-Weil numbers of weight $1$. The monodromy operator $N$ maps the $q$ eigenspace isomorphically onto the $1$ eigenspace, and acts as $0$ on all other eigenspaces. The corresponding Lyndon basis of $\D_\st(\gr^\W_{-i}U_n^\dual)$ is also a basis of $\varphi_K$-eigenvectors, with the eigenvalue of (the basis element corresponding to) a Lyndon word $w$ being the product of the eigenvalues of its letters.

It follows from this description that the monodromy operator on $\gr^\W_{-i}U_n^\dual$ maps the $q$ eigenspace surjectively onto the $1$ eigenspace, and so the desired dimension is equal to the number of Lyndon words of length $i$ and eigenvalue $q$, minus the number of Lyndon words of length $i$ and eigenvalue $1$. This latter quantity is simply the number of Lyndon words in the $g_0$ vectors of eigenvalue $1$, and hence equal to $\sum_{d\mid i}\frac{\mu(d)}ig_0^{i/d}$ \cite[Theorem~7.1]{reutenauer}. Summed over $1\leq i\leq n$, this yields the term $-L_{\leq n}(g_0)$ in the claimed formula.

Now there are two types of Lyndon words $w$ of eigenvalue $q$: the letters of $w$ with eigenvalue not equal to $1$ are either a single eigenvector with eigenvalue $q$, or two eigenvectors with eigenvalues $\lambda$, $q/\lambda$ with $\lambda$ of weight $1$. To count words of the former type, suppose that $x$ is an eigenvector with eigenvalue $q$. There are $g_0^{i-1}$ Lyndon words of length $i$ containing $x$ and $i-1$ vectors of eigenvalue $0$ by \cite[Theorem~7.1(7.1.2)]{reutenauer}. Summed over $x$ and over $1\leq i\leq n$, this yields the term $\frac{g_0^{n+1}-g_0}{g_0-1}$ in the claimed formula.

To count words of the latter type, suppose that $x$ and $y$ are eigenvectors with eigenvalues $\lambda$ and $q/\lambda$ respectively. Using \cite[Theorem~7.1(7.1.2)]{reutenauer} again, we find that if $x\neq y$, then the number of Lyndon words containing $x$, $y$ and $i-2$ eigenvectors of eigenvalue $1$ is $(i-1)g_0^{i-2}$. If $x=y$, then the number of Lyndon words containing two copies of $x$, $y$ and $i-2$ eigenvectors of eigenvalue $1$ is $\frac{i-1}2g_0^{i-2}$ if $i$ is odd, and $\frac{i-1}2g_0^{i-2}-\frac12g_0^{i/2-1}$ if $i$ is even. Summed over $x,y$ and over $1\leq i\leq n$, this yields the quantity in the final line of the claimed formula. Here, we are using that $\nu_\lambda=\nu_{q/\lambda}$ by Poincar\'e duality, so that there are $\frac12\left(\sum_\lambda\nu_\lambda^2-\nu_{\sqrt q}-\nu_{-\sqrt q}\right)$ unordered pairs with $x\neq y$, and $\nu_{\sqrt q}+\nu_{-\sqrt q}$ pairs with $x=y$.
\end{proof}
\end{example}

\subsection{Torsors in Tannakian categories}

We will prove Theorem~\ref{thm:representability} via an alternative interpretation of the presheaves $\HH^1_*(G_K,U)$ as classifying spaces of certain $G_K$-equivariant torsors. For our purposes, it will be convenient to recall the definition of torsors in slightly greater generality.

Let $\Tann$ be a Tannakian category -- not necessarily neutral -- over a characteristic $0$ field $\Fd$. As explained in \cite[\S5.2--5.4]{deligne:p1}, one can make sense of many of the basic notions of affine algebraic geometry inside $\Tann$, defining algebras in $\ind{-}\Tann$, affine schemes in $\Tann$, and so on, all functorial with respect to faithfully exact $\otimes$-functors. We say that an affine group scheme $U$ in $\Tann$ is \emph{pro-unipotent} just when $\omega(\Tann)$ is pro-unipotent for some \fiber functor $\omega\colon\Tann\rightarrow\Mod_{\Fd'}^\fin$ valued in finite-dimensional $\Fd'$-vector spaces for some extension field $\Fd'\supseteq\Fd$. Since any two \fiber functors become isomorphic over a common field extension, this notion is independent of $\omega$.

A \emph{torsor} under an affine group scheme $U$ in $\Tann$ over an affine scheme $\Spec(\LLambda)$ in $\Tann$ is a faithfully flat $\Spec(\LLambda)$-scheme $T\rightarrow\Spec(\LLambda)$ endowed with a \ambrit{fiberwise}{fibrewise} right action of $U$ such that the induced map
\[
T\times_{\Spec(\1)}U \isoarrow T\times_{\Spec(\LLambda)}T
\]
is an isomorphism. Here, faithfully flat means\footnote{There is an alternative internal definition of faithful flatness in \cite[\S5.3]{deligne:p1}. Although these presumably define the same notion, we will neither need nor prove this in what follows.} that the map $\omega(T)\rightarrow\omega(\Spec(\LLambda))$ is faithfully flat for some \fiber functor $\omega$ -- as above this is independent of $\omega$. When $U$ is pro-unipotent, the condition that $\omega(T)\rightarrow\omega(\Spec(\LLambda))$ is faithfully flat is equivalent to it being split, i.e.\ $\omega(T)(\omega(\LLambda))$ being non-empty.

There is a functor $\Lambda\mapsto\Lambda\otimes\1$ from $\Fd$-algebras to algebras in $\Tann$ \cite[\S5.6]{deligne:p1}, and so we define the \emph{first cohomology presheaf} of an affine group-scheme $U$ in $\Tann$ to be the presheaf on $\Aff_\Fd$ given by
\[
\HH^1(\Tann,U)\colon\Spec(\Lambda)\mapsto\{\text{$U$-torsors over $\Spec(\Lambda\otimes\1)$}\}/\text{iso}\,.
\]
This is in fact a presheaf of pointed sets, with basepoint corresponding to the class of the trivial torsor $U$ over $\Spec(\1)$.


\begin{example}\label{ex:equivariant_torsors}
Let $G$ be a profinite group, and $\Tann$ the category of continuous representations of $G$ on finite-dimensional $\BQ_\ell$-vector spaces. Then $\ind{-}\Tann$ is the category of continuous representations of $G$ on general $\BQ_\ell$-vector spaces, where the topology on a $\BQ_\ell$-vector space is as in Remark~\ref{rmk:topologies}. The category of affine schemes in $\Tann$ is equivalent to the category of affine $\BQ_\ell$-schemes $X$ endowed with an action of $G$ which is continuous in the sense that the action map $G\times X(\Lambda)\rightarrow X(\Lambda)$ is continuous for every $\BQ_\ell$-algebra $\Lambda$. Here, the topology on $X(\Lambda)$ is the subspace topology induced from any (possibly infinite-dimensional) affine embedding $X(\Lambda)\hookrightarrow\Spec(\Sym^\bullet(V))(\Lambda)=V^\dual_\Lambda$, where $V^\dual_\Lambda$ is topologi\sz ed as in Remark~\ref{rmk:topologies}.

A pro-unipotent group in $\Tann$ is a $\BQ_\ell$-pro-unipotent group endowed with a continuous action of $G$. A $U$-torsor $T$ over $\Spec(\Lambda\otimes\1)$ is determined up to isomorphism by the $U(\Lambda)$-valued cohomology class of the cocycle $\sigma\mapsto\gamma^{-1}\sigma(\gamma)$ for $\gamma\in T(\Lambda)$, and hence there is an isomorphism
\[
\HH^1(\Tann,U) \cong \HH^1(G,U)
\]
of presheaves on $\Aff_{\BQ_\ell}$, where $\HH^1(G,U)$ denotes the continuous Galois cohomology presheaf.
\end{example}

\begin{example}
Let $\Tann=\Mod_{(\varphi,N,G_K)}$ be the $\BQ_p$-linear Tannakian category of discrete $(\varphi,N,G_K)$-modules. There are two natural \fiber functors on $\Tann$: one given by taking the underlying $\BQ_p^\nr$-vector space, and one defined over $K$ given by taking the $G_K$-invariants in the base change to $\overline K$ \cite[4.3.1]{fontaine:semi-stables}. We denote these by $(-)_{\BQ_p^\nr}$ and $(-)_K$, respectively. If $D$ is the $(\varphi,N,G_K)$-module associated to a de Rham representation $V$, then $D_{\BQ_p^\nr}=\D_\pst(V)$ and $D_K=\D_\dR(V)$.

For an affine scheme $T$ in $\Tann$, $T_{\BQ_p^\nr}$ carries various extra structures, including a crystalline Frobenius $\varphi\colon T_{\BQ_p^\nr}\isoarrow \sigma^*T_{\BQ_p^\nr}$, where $\sigma^*$ denotes pullback along the arithmetic Frobenius in $G_{\BQ_p^\nr/\BQ_p}$, as well as a vector field determined by the monodromy operator. In particular, the crystalline Frobenius acts on Hom-sets between affine schemes in $\Tann$.
\end{example}

In what follows, we will often make use of the following well-known lemma, which we have already seen implicitly in Definition~\ref{weight-defn}.

\begin{lemma}[Transport of filtrations]\label{lem:transport}
Let $U$ be a pro-unipotent group in $\Tann$ endowed with an exhaustive and increasing filtration
\[
\1=\W_0\OO(U)\leq\W_1\OO(U)\leq\dots
\]
by objects in $\ind{-}\Tann$, compatible with the Hopf algebra structure. Then for every $U$-torsor $T$ over $\Spec(\LLambda)$ there is a unique exhaustive filtration $\W_\bullet\OO(T)$ of $\OO(T)$ by objects in $\ind{-}\Tann$ compatible with the $\LLambda$-algebra structure and the coaction map $\OO(T)\rightarrow\OO(T)\otimes\OO(U)$. There is a unique isomorphism $\gr^\W_\bullet\OO(T)\isoarrow\LLambda\otimes\gr^\W_\bullet\OO(U)$ of graded $\LLambda$-algebras in $\ind{-}\Tann$ compatible with the graded coaction map.
\begin{proof}[Proof (sketch)]
When $\Tann=\Mod_\Fd^\fin$, this is well-known: one chooses an element $\gamma\in T(\Lambda)$, which provides an isomorphism $T\isoarrow\Spec(\LLambda)\times_{\Spec(\1)}U$ of $U$-torsors over $\Spec(\LLambda)$, and the $\W$-filtration on $\OO(T)$ is defined so as to make $\OO(T)\isoarrow\LLambda\otimes\OO(U)$ a $\W$-filtered isomorphism. This filtration is independent of $\gamma$, as is the induced $\W$-graded isomorphism $\gr^\W_\bullet\OO(T)\isoarrow\LLambda\otimes\gr^\W_\bullet\OO(U)$.

Both the $\W$-filtration and the induced graded isomorphism admit intrinsic descriptions. For the $\W$-filtration, $\W_n\OO(T)$ is the preimage of $\OO(T)\otimes\W_n\OO(U)$ under the coaction map, while the graded isomorphism is the composite $\gr^\W_\bullet\OO(T)\rightarrow\gr^\W_\bullet\OO(T)\otimes\gr^\W_\bullet\OO(U)\rightarrow\LLambda\otimes\gr^\W_\bullet\OO(U)$, where the first map is the graded coaction and the second is the inverse of the unit isomorphism $\LLambda\isoarrow\gr^\W_0\OO(T)$, tensored with $\gr^\W_\bullet\OO(U)$.

This latter definition of $\W_\bullet\OO(T)$ makes sense in any Tannakian category $\Tann$; that this gives the unique filtration with the claimed properties can be checked after applying a \fiber functor, where we use the previous case (over a field extension). This in particular ensures that the unit map $\LLambda\isoarrow\gr^\W_0\OO(T)$ is an isomorphism, so that the definition of the graded isomorphism also makes sense in $\Tann$. Again applying a \fiber functor proves that it is the unique isomorphism with the claimed properties.
\end{proof}
\end{lemma}


\subsubsection{Torsors in mixed Weil--Deligne representations}

The main example we will compute is the case when $\Tann$ is the category of mixed Weil--Deligne representations over a characteristic $0$ field $\Fd$. Throughout this subsection, we fix a pro-unipotent group $U$ in $\Rep_\Fd^\mix(\WeilD_K)$ (equivalently, an $\Fd$-pro-unipotent group $U$ with an action of the Tannaka group $\G_\mix$ from \S\ref{ss:tannaka}), and assume that $U$ has only negative weights in the sense that $\W_0\OO(U)=\BQ_\ell$ (equivalently $\W_{-1}\Lie(U)=\Lie(U)$). We will describe the cohomology of $U$ explicitly in terms of the following vector space (which was denoted by $\Lie(U)^\can$ in \cite[Definition~2.2.2]{alex-netan} in the context of $\ell$-adic Galois representations).

\begin{defn}
The Lie algebra $\Lie(U)$, being the space of $\Fd$-valued derivations on $\OO(U)$, is naturally a pro-object of $\Rep_\Fd^\mix(\WeilD_K)$. We define
\[
\V(U) := \Lie(U)(-1)^{\Weil_K,X}
\]
to be the (pro-finite dimensional) subspace of $\Lie(U)(-1)$ fixed by $\Weil_K$ and $X=\begin{pmatrix}1&1\\0&1\end{pmatrix}\in\SL_2(\Fd)$ (see \S\ref{ss:tannaka} for the definition of the $\SL_2$-action). In other words, $\V(U)$ consists of those elements $v\in\Lie(U)$ such that $\rho(w)(x)=p^{-v(w)}\cdot v$ for all $w\in\Weil_K$ and $N^r(v)\in\W_{-r-2}\Lie(U)$ for all $r\geq0$.

Since the weight torus is central in $\SL_2\rtimes\Weil_K^\mix$, it acts on $\V(U)$ and endows it with a product grading\footnote{The grading we actually use here is the grading on $\Lie(U)$, which we identify with $\Lie(U)(-1)$ via the triviali\sz ation of $\Fd(1)$. In other words, we have shifted the grading on $\Lie(U)(-1)$ by two. This is because we will see that $\gr^\W_{-r}\V$ is related to the operator $\delta_r$ from Definition~\ref{def:mixing}, which is $\W$-graded of degree $-r$.}
\[
\V(U) = \prod_{r>0}\gr^\W_{-r}\V(U)\,.
\]
Note that $\gr^\W_{-1}\V(U)=0$.
\end{defn}

\begin{lemma}\label{lem:mixed_torsors}
There is a canonical isomorphism
\[
\HH^1(\Rep_\Fd^\mix(\WeilD_K),U) \cong \V(U)
\]
of presheaves on $\Aff_\Fd$, where we identify the pro-finite-dimensional vector space $\V(U)$ with its corresponding affine space in the usual way.
\end{lemma}

The key to this theorem is the following version of the canonical paths from Definition~\ref{defn:can-paths} over a base.

\begin{prop}\label{prop:canonical_path_over_base}
For every $U$-torsor $T$ over $\Spec(\Lambda)$, there is a unique element $p_T\in T(\Lambda)$ fixed under the action of the weight torus. This element is in fact invariant under the action of $\G_\pure$.
\begin{proof}
The action of the weight torus~$\GG_m$ on~$T$ corresponds to the canonical splitting of the weight filtration on~$\OO(T)$, and the inclusion $T^{\GG_m}\hookrightarrow T$ corresponds to the projection $\OO(T)\to\gr^\W_0\OO(T)$ of the canonical splitting. According to Lemma~\ref{lem:transport}, we have $\gr^\W_0\OO(T)=\Lambda$, and hence $T^{\GG_m}\cong\Spec(\Lambda)$ consists of a single~$\Lambda$-point $p_T\in T(\Lambda)$. Since $\GG_m$ is central in $\G_\pure$, it follows that the action of $\G_\pure$ preserves $T^{\GG_m}$, so must fix~$p_T$.
\end{proof}
\end{prop}

\begin{proof}[Proof of Lemma~\ref{lem:mixed_torsors}]
Recall from Lemma~\ref{lem:explicit_tannaka} that the Tannaka group $\G_\mix$ of $\Rep_\Fd^\mix(\WeilD_K)$ is canonically isomorphic to $\mathcal U\rtimes\G_\pure$, where~$\G_\pure$ is the Tannaka group of pure Weil--Deligne representations and~$\mathcal U$ is a pro-unipotent group containing certain elements $\exp(\delta_r)$ for~$r>0$. We define the map $\alpha\colon\HH^1(\Rep_\Fd^\mix(\WeilD_K),U)\rightarrow\V(U)=\prod_{r>0}\gr^\W_{-r}\Lie(U)(-1)^{\Weil_K,X}$ as follows. Given a $U$-torsor $T$ over some $\Spec(\Lambda)$ and an integer $r>0$, by differentiating the action of the copy of $\GG_a$ in $\mathcal U\leq\G_\mix$ spanned by $\exp(\delta_r)$, we obtain a relative vector field $D_r$ on $T$. Evaluating this vector field at the canonical path $p_T$ from Proposition~\ref{prop:canonical_path_over_base} gives a relative tangent vector based at $p_T$, and hence $p_T^{-1}D_r(p_T)$ is an element of $\Lie(U)(\Lambda)$. The commutation conditions for $\delta_r$ (see Remark~\ref{rmk:X}) ensure that in fact $p_T^{-1}D_r(p_T)\in\gr^\W_{-r}\Lie(U)(-1)^{\Weil_K,X}$, so we define the map $\alpha$ by $\alpha_{\Spec(\Lambda)}([T]) := (p_T^{-1}D_r(p_T))_{r>0}$.

Next we show that each $\alpha_{\Spec(\Lambda)}$ is injective. Suppose we are given two torsors $T_1$, $T_2$ over $\Spec(\Lambda)$ such that $p_{T_1}^{-1}D_r(p_{T_1})=p_{T_2}^{-1}D_r(p_{T_2})$ for all $r>0$. There is a unique isomorphism $\beta\colon T_1\isoarrow T_2$ of $U$-torsors taking $p_{T_1}$ to $p_{T_2}$, and this is automatically $\G_\pure$-equivariant. To show that $\beta$ is equivariant for the action of the copy of $\GG_a$ in $\mathcal U$ generated by $\exp(\delta_r)$, it suffices to show that it is compatible with the relative vector fields $D_r$. But since the action map $T_i\times U\rightarrow T_i$ is $\GG_a$-equivariant, it is automatically compatible with the vector fields $D_r$ on $T_i$ and $U$, so the vector field $D_r$ on $T_i$ is determined by its value at any given $\Lambda$-point of $T_i$. Since by assumption $\beta$ preserves $D_r$ at $p_{T_1}$, it thus preserves $D_r$ everywhere, and so is $\GG_a$-equivariant. Since~$\G_\mix$ is generated by~$\G_\pure$ and the elements $\exp(\delta_r)$ for~$r>0$, the isomorphism~$\beta$ is $\G_\mix$-equivariant, so $[T_1]=[T_2]$.

Finally, we show that each $\alpha_{\Spec(\Lambda)}$ is surjective, by constructing an explicit inverse. For a tuple $\underline v=(v_r)_{r>0}\in\prod_{r>0}\gr^\W_{-r}\Lie(U)(-1)_\Lambda^{\Weil_K,X}$, we let $U_{\underline v}$ denote the $\G_\mix$-equivariant $U$-torsor over $\Spec(\Lambda)$ constructed as follows. We take $U_{\underline v}=\Spec(\Lambda)\times_{Spec(\Fd)}U$ (as a right $U$-torsor) with the same action of $\G_\pure$. For $r>0$, we define an action of $\GG_a$ on $U_{\underline v}$ by $\rho_r(t)(u)=\exp(tv_r)\cdot u$. The choice of $v_r$ ensures that $\rho_r(t)$ commutes with the action of $X$, and satisfies the commutation relations
\begin{align*}
\rho(w)\circ\rho_r(t)\circ\rho(w) &= \rho_r(p^{-v(w)}t) \\
\rho(\lambda)\circ\rho_r(t)\circ\rho(\lambda^{-1}) &= \rho_r(\lambda^{-r}t)
\end{align*}
against elements $w\in\Weil_K$ in the Weil group and $\lambda\in\GG_m$ in the weight torus. It is easy to check that these actions extend uniquely to an action of $\mathcal U$ on $U_{\underline v}$ equivariant for the action of $U$ and $\G_\pure$, and hence endow it with the structure of a $\G_\mix$-equivariant $U$-torsor. It is then easy to check that $\alpha([U_{\underline v}]) = \underline v$, so that $\alpha$ is surjective, as claimed.
\end{proof}

Before we proceed to the proof of Theorem~\ref{thm:representability}, we note that we already have proved its $\ell$-adic counterpart, re-proving \cite[Theorem~2.2.4]{alex-netan} (in mildly more generality).

\begin{corollary}\label{cor:explicit_l-adic}
Let $\ell\neq p$ be a prime and let $U$ be a $\BQ_\ell$-pro-unipotent group endowed with a continuous action of $G_K$, and suppose that $U$ is mixed with only negative weights with respect to some filtration on $\OO(U)$. Then we have a canonical isomorphism
\[
\HH^1(G_K,U) \cong \V(U)
\]
of presheaves on $\Aff_{\BQ_\ell}$. In particular, $\HH^1(G_K,U)$ is representable by an affine space.
\begin{proof}
The filtration on $\Lie(U)$ induces a corresponding filtration $\BQ_\ell=\W_0\OO(U)\leq\W_1\OO(U)\leq\dots$ making $\OO(U)$ into an ind-mixed representation of $G_K$. If $T$ is a $U$-torsor over $\Spec(\Lambda)$, then by Lemma~\ref{lem:transport} there is a unique $G_K$-invariant $\W$-filtration on $\OO(P)$ compatible with all the appropriate structures, which makes $\OO(P)$ into an ind-mixed representation of $G_K$. We see from this construction and Example~\ref{ex:equivariant_torsors} that $\HH^1(G_K,U)$ is isomorphic to $\HH^1(\Rep_{\BQ_\ell}^\mix(G_K),U)$. It follows from the full-faithfulness of the functor
\[
\Rep_{\BQ_\ell}^\mix(G_K)\rightarrow \Rep_{\BQ_\ell}^\mix(\WeilD_K)
\]
of Example~\ref{ex:l_not_p} that the induced map
\[
\HH^1(\Rep_{\BQ_\ell}^\mix(G_K),U)\rightarrow \HH^1(\Rep_{\BQ_\ell}^\mix(\WeilD_K),U)
\]
is injective; surjectivity follows from the fact that the image of this fully faithful functor is closed under extensions. We conclude via Lemma~\ref{lem:mixed_torsors}.
\end{proof}
\end{corollary}

\subsubsection{Proof of Theorem~\ref{thm:representability}}

We now come to the proof of Theorem~\ref{thm:representability}. We proceed in several steps.

\begin{prop}
$\HH^1_g(G_K,U)\subseteq\HH^1(G_K,U)$ consists of those $G_K$-equivariant $U$-torsors which are de Rham. Such a torsor $T$ is automatically mixed (i.e.\ a torsor under $U$ in $\Rep_{\BQ_p}^\mix(G_K)$) when $\OO(T)$ is endowed with the unique weight filtration compatible with the weight filtration on $\OO(U)$, as in Lemma~\ref{lem:transport}.
\begin{proof}
For the first part, if $\OO(T)$ is ind-de Rham then we have a $G_K$-equivariant $\B_\dR\otimes\Lambda$-algebra isomorphism $\B_\dR\otimes_{\BQ_p}\OO(T)\isoarrow\B_\dR\otimes_K\D_\dR(\OO(T))$. Since $\D_\dR$ is a \fiber functor on the category of de Rham representations, $\D_\dR(T):=\Spec(\D_\dR(\OO(T)))$ is automatically a $\D_\dR(U)$-torsor over $\Spec(K\otimes\Lambda)$, and hence $\D_\dR(T)(K\otimes\Lambda)\neq\emptyset$. In particular, $T(\B_\dR\otimes\Lambda)^{G_K}=\D_\dR(\B_\dR\otimes\Lambda)^{G_K}\neq\emptyset$, and hence the class of $T$ lies in $\HH^1_g$.

In the other direction, let $\Conil_\bullet$ denote the conilpotency filtration on $\OO(U)$, and also the induced filtration on $\OO(T)$ from Lemma~\ref{lem:transport}, so that $\gr^\Conil_\bullet\OO(P)\cong\Lambda\otimes_{\BQ_p}\gr^\Conil_\bullet\OO(U)$ is ind-de Rham. When $T$ lies in $\HH^1_g$, we have that $T(\B_\dR\otimes_{\BQ_p}\Lambda)^{G_K}\neq\emptyset$, from which we deduce that there exists a $G_K$-equivariant and $\Conil$-filtered isomorphism $\B_\dR\otimes_{\BQ_p}\OO(T)\simeq\B_\dR\otimes_{\BQ_p}\Lambda\otimes_{\BQ_p}\OO(U)$. Since $\OO(U)$ is ind-de Rham, the conilpotency filtration on $\B_\dR\otimes_{\BQ_p}\OO(U)$ splits $\B_\dR$-linearly and $G_K$-equivariantly, and hence so too does the conilpotency filtration on $\B_\dR\otimes_{\BQ_p}\OO(T)$. This implies that $\OO(T)$ is ind-de Rham, as desired.

The second part follows by the same argument as in Corollary~\ref{cor:explicit_l-adic}.
\end{proof}
\end{prop}

\begin{prop}
Let $T$ be a de Rham $U$-torsor over $\Spec(\Lambda)$. Then:
\begin{enumerate}
	\item the canonical path $p_T\in\D_\pst(T)(\Lambda\otimes_{\BQ_p}\BQ_p^\nr)$ is $\varphi$- and $G_K$-invariant; and
	\item there exists a ``Hodge-filtered path'' $p_T^\dR\in\D_\dR(T)(\Lambda\otimes_{\BQ_p}K)$, such that the induced isomorphism $\OO(\D_\dR(T))\isoarrow\Lambda\otimes_{\BQ_p}\OO(\D_\dR(U))$ is a Hodge-filtered isomorphism. This choice of $p_T^\dR$ is unique up to the right-multiplication action of $\F^0\D_\dR(U)$ (the pro-unipotent subgroup whose Lie algebra is $\F^0\Lie(\D_\dR(U))$).
\end{enumerate}
In particular, $p_T\in\D_\dR(\Lambda\otimes_{\BQ_p}K)=T_{\BQ_p}(\Lambda\otimes_{\BQ_p}\overline K)^{G_K}$.
\begin{proof}
For the first part, $\varphi$-invariance of $p_T$ follows from the fact that the canonical splitting of the weight filtration on a mixed Weil--Deligne representation is $\varphi$-equivariant (Remark~\ref{rmk:varphi_actions}). Since it is also invariant under the Weil group action, it is thus invariant under the $G_K$-action.

The second part follows by the argument in \cite[Proposition~9.9]{motivic_anabelian_heights}.
\end{proof}
\end{prop}

Using this, we define a morphism $\alpha\colon\HH^1_g(G_K,U)\rightarrow\Res^K_{\BQ_p}\left(\D_\dR(U)/\F^0\right)\times\V(U)^{\varphi=1}$ by declaring that
\[
\alpha([T]) := (p_T^{-1}p_T^\dR,(p_T^{-1}D_r(p_T))_{r>0})\,,
\]
where $D_r$ is as in the proof of Lemma~\ref{lem:mixed_torsors}.

\begin{prop}\label{prop:alpha_iso}
$\alpha$ is an isomorphism.
\begin{proof}
Firstly, we show that $\alpha$ is injective as a morphism of presheaves. Suppose that $T_1$ and $T_2$ are de Rham $U$-torsors such that $\alpha([T_1])=\alpha([T_2])$. We have isomorphisms $\beta_\dR\colon\D_\dR(T_1)\isoarrow\D_\dR(T_2)$ and $\beta_\pst\colon\D_\pst(T_1)\isoarrow\D_\pst(T_2)$ (of $\D_\dR(U)$- and $\D_\pst(U)$-torsors, respectively) sending $p_{T_1}$ to $p_{T_2}$. Our assumptions that $\alpha([T_1])=\alpha([T_2])$ ensure firstly that $\beta_\dR$ is compatible with the Hodge filtration (on the affine rings) and secondly that $\beta_\pst$ is an isomorphism on the underlying mixed Weil--Deligne representations. Since $\beta_\pst$ is also $\varphi$-equivariant (as $p_{T_1}$ and $p_{T_2}$ are $\varphi$-invariant), it follows that $\beta_\pst$ is an isomorphism of $(\varphi,N,G_K)$-modules. Thus $\D_\pst(T_1)$ and $\D_\pst(T_2)$ are isomorphic as $\D_\pst(U)$-torsors in the category of weakly admissible Hodge-filtered discrete $(\varphi,N,G_K)$-modules. Since $\D_\pst\colon\Rep_{\BQ_p}^\dR(G_K)\isoarrow\MF^{\adm}_{(\varphi,N,G_K)}$ is an equivalence \cite[Th\'eor\`eme~A]{colmez-fontaine:construction_des_reps}\footnote{Strictly speaking, \cite[Th\'eor\`eme~A]{colmez-fontaine:construction_des_reps} only shows that weakly admissible $(\varphi,N)$-modules are admissible. However, it implies the same result for $(\varphi,N,G_K)$-modules by a straightforward argument. If~$D$ is a weakly admissible $(\varphi,N,G_K)$-module, then there is a finite Galois extension $L/K$ contained in $\Kbar$ such that $D=\Kbar_0\otimes_{L_0}D_0$ for some weakly admissible $(\varphi,N,G_{L|K})$-module~$D_0$. Since~$D_0$ is weakly admissible as a $(\varphi,N)$-module by \cite[Proposition~4.4.9]{fontaine:semi-stables}, it follows from the result of Colmez--Fontaine that $V_\st(D_0)$ is a semistable representation of~$G_L$. Hence $V_\pst(D)=V_\st(D_0)$ is a potentially semistable representation of~$G_K$, and $D\cong\D_\pst(V_\pst(D))\to D$ is admissible.}\cite[Th\'eor\`eme~5.6.7(v)]{fontaine:semi-stables}, it follows that $T_1$ and $T_2$ are $G_K$-equivariantly isomorphic as $U$-torsors.

To conclude, we show that $\alpha$ is surjective. Let $\Lambda$ be a $\BQ_p$-algebra and take some
\[
(u_\dR,(v_r)_{r>0})\in\D_\dR(U)(\Lambda\otimes_{\BQ_p}K)\times\prod_{r>0}\gr^\W_{-r}\Lie(U)(-1)_{\Lambda\otimes_{\BQ_p}\BQ_p^\nr}^{\Weil_K,X,\varphi=1}\,.
\]
We define a de Rham $U$-torsor $T_{u_\dR,\underline v}$ as follows. We set
\begin{align*}
\D_\dR(T_{u_\dR,\underline v}) &:= \Spec(\Lambda\otimes_{\BQ_p}K)\times_{\Spec(K)}\D_\dR(U)\,, \\
\D_\pst(T_{u_\dR,\underline v}) &:= \Spec(\Lambda\otimes_{\BQ_p}\BQ_p^\nr)\times_{\Spec(\BQ_p^\nr)}\D_\pst(U)\,,
\end{align*}
which are torsors under $\D_\dR(U)$ and $\D_\pst(U)$ respectively. We define a comparison isomorphism $c_{u_\dR}\colon\Spec(\overline K)\times_{\Spec(K)}\D_\dR(T_{u_\dR,\underline v})\isoarrow\Spec(\overline K)\times_{\Spec(\BQ_p^\nr)}\D_\pst(T_{u_\dR,\underline v})$ to be the map given by left-multiplication by $u_\dR$ on $\Spec(\Lambda\otimes_{\BQ_p}\overline K)\times_{\Spec(\Lambda\otimes_{\BQ_p}K)}\D_\dR(U)=\Spec(\Lambda\otimes_{\BQ_p}\overline K)\times_{\Spec(\Lambda\otimes_{\BQ_p}\BQ_p^\nr)}\D_\pst(U)$.

Now we endow $\D_\pst(T_{u_\dR,\underline v})$ with the semilinear crystalline Frobenius (on its affine ring) induced from the crystalline Frobenius on $\D_\pst(U)$. The proof of Lemma~\ref{lem:mixed_torsors} explains also how to use the elements $(v_r)_{r>0}$ to endow $\D_\pst(T_{u_\dR,\underline v})$ with the structure of a $\BQ_p^\nr$-linear mixed Weil--Deligne representation, whose underlying Weil group action is the same as that induced from $\D_\pst(U)$. It thus follows, by reversing the construction in Example~\ref{ex:l_is_p}, that this crystalline Frobenius and mixed Weil--Deligne representation underlie a mixed $(\varphi,N,G_K)$-module structure on $\D_\pst(T_{u_\dR,\underline v})$.

We also endow $\D_\dR(T_{u_\dR,\underline v})$ with the Hodge filtration arising from that on $\D_\dR(U)$. Via the comparison isomorphism $c_{u_\dR}$, this endows $\Spec(\Lambda\otimes_{\BQ_p}\overline K)\times_{\Spec(\Lambda\otimes_{\BQ_p}\BQ_p^\nr)}\D_\pst(T_{u_\dR,\underline v})$ with a $G_K$-invariant Hodge filtration. We claim that this Hodge filtration is weakly admissible.

To see this, by the argument of Lemma~\ref{lem:transport}, the $\W$-graded isomorphism
\[
\gr^\W_\bullet c_{u_\dR}^*\colon \gr^\W_\bullet\overline K\otimes_{\BQ_p^\nr}\OO(\D_\pst(T_{u_\dR,\underline v})) \isoarrow\overline K\otimes_K\OO(\D_\dR(T_{u_\dR,\underline v}))
\]
doesn't depend on $u_\dR$, and hence the canonical isomorphism $\gr^\W_\bullet\OO(\D_\pst(T_{u_\dR,\underline v}))\cong(\Lambda\otimes_{\BQ_p}\BQ_p^\nr)\otimes_{\BQ_p^\nr}\gr^\W_\bullet\OO(\D_\pst(U))$ is Hodge-filtered (after base-changing to $\Lambda\otimes_{\BQ_p}\overline K$). This implies that $\gr^\W_\bullet\OO(\D_\pst(T_{u_\dR,\underline v}))$ is ind-weakly admissible, and hence $\OO(\D_\pst(T_{u_\dR,\underline v}))$ is ind-weakly admissible by \cite[Proposition~4.4.4(iii)]{fontaine:semi-stables}.

\smallskip

Now since $\D_\pst(T_{u_\dR,\underline v})$ is weakly admissible, we obtain via Fontaine's $\mathsf V_\pst$ functor a de Rham $U$-torsor $T_{u_\dR,\underline v}$ whose image under $\D_\pst$ is $\D_\pst(T_{u_\dR,\underline v})$. It follows by a simple calculation that $\alpha([T_{u_\dR,\underline v}])=(u_\dR,\underline v)$, so $\alpha$ is surjective as claimed.
\end{proof}
\end{prop}

Proposition~\ref{prop:alpha_iso} proves Theorem~\ref{thm:representability} for $\HH^1_g$. The cases of $\HH^1_e$ and $\HH^1_f$ are covered by \cite[Proposition~1.4]{kim:tangential}.\qed
\appendix
\newcommand\g{\mathfrak g}
\newcommand\gbar{\overline\g}
\newcommand\f{\mathfrak f}
\renewcommand\r{\mathfrak r}
\newcommand\Disc{\mathbb D}
\newcommand\op{\mathrm{op}}
\newcommand\alg{\mathrm{alg}}
\newcommand\an{\mathrm{an}}
\newcommand\mM{\mathcal M}
\newtheorem{construction}[equation]{Construction}

\section{A canonical presentation for the weight-graded fundamental group}

In this appendix, we outline another proof of Theorems~\ref{weight-monodromy-pi1} and~\ref{semi-simplicity-l-adic} on mixedness and Frobenius-semisimplicity of the $\BQ_\ell$-pro-unipotent \'etale fundamental groups of smooth geometrically connected varieties $Y/K$. The aim here, following~\cite{hain:torelli}, is to write down an explicit and Galois-equivariant presentation of the associated $\W$-graded of the fundamental group, from which mixedness and Frobenius-semisimplicity are immediate.

For this section, we fix a smooth geometrically connected variety $Y$ over a characteristic $0$ field $K$ with fixed algebraic closure $\overline K$, and fix a simple normal crossings compactification $\overline Y$ of $Y$, with complementary divisor $D$. We write $I_1$ for the set of irreducible components of $D_{\overline K}=\bigcup_{i\in I_1}D_i$, and $I_2$ for the set of ordered pairs of distinct elements $i,j\in I_1$ such that $D_i\cap D_j\neq\emptyset$. Both the sets $I_1$ and $I_2$ carry an action of $G_K=\Gal(\overline K/K)$.

We write $\g=\g_b$ for the Lie algebra of the continuous $\BQ_\ell$-Mal\u cev completion of the profinite \'etale fundamental group $\pi_1^\et(Y_{\overline K},b)$ for some geometric basepoint $b$. There is a canonical isomorphism $\g^\ab\cong\HH_1^\et(Y_{\overline K},\BQ_\ell):=\HH^1_\et(Y_{\overline K},\BQ_\ell)^\dual$, which endows $\g$ with a \emph{weight filtration} $\W_\bullet\g$~\cite[Definition~1.5]{AMO}, namely the unique increasing filtration inducing the usual filtration on $\g^\ab=\HH_1^\et(Y_{\overline K},\BQ_\ell)$ for which the Lie bracket $[\cdot,\cdot]\colon\g\hatotimes\g\rightarrow\g$ is strict. This is the filtration described in Corollary \ref{lie-algebra-corollary}.

Now the associated graded $\gr^\W_\bullet\g$ is independent of $b$ up to canonical isomorphism, and hence inherits an action of $G_K$ from that on $Y_{\overline K}$. Our main theorem of this appendix gives an explicit $G_K$-equivariant presentation of this graded Lie algebra.

\begin{theorem}\label{thm:presentation}
Keep notation as above. Let $\f$ denote the free $\W$-graded pro-nilpotent Lie algebra over $\BQ_\ell$ generated by $\HH_1^\et(\overline Y_{\overline K},\BQ_\ell)$ in degree $-1$ and $\BQ_\ell(1)^{\oplus I_1}$ in degree $-2$. Let $\r\unlhd\f$ denote the (homogenous) ideal generated by the images of the following maps:
\begin{itemize}
	\item the map
	\[
	\HH_2^\et(\overline Y_{\overline K},\BQ_\ell) \rightarrow \gr^\W_{-2}\f = \bigwedge\nolimits^{\!\!2}\HH_1^\et(\overline Y_{\overline K},\BQ_\ell)\oplus\BQ_\ell(1)^{\oplus I_1}
	\]
	induced by the map dual to cup product and the maps dual to the Gysin maps $\HH^0_\et(D_i,\BQ_\ell)(-1)\rightarrow\HH^2_\et(\overline Y_{\overline K},\BQ_\ell)$;
	\item the map
	\[
	\bigoplus_{i\in I_1}\HH_1^\et(D_i,\BQ_\ell)(1)\rightarrow\gr^\W_{-3}\f
	\]
	given by taking the commutator of the maps $\HH_1^\et(D_i,\BQ_\ell)\rightarrow\HH_1^\et(\overline Y_{\overline K},\BQ_\ell)$ and the $i$th coproduct inclusion $\BQ_\ell(1) \rightarrow \BQ_\ell(1)^{\oplus I_1}$; and
	\item the map
	\[
	\BQ_\ell(2)^{\oplus I_2} \rightarrow \gr^\W_{-4}\f
	\]
	given by taking the commutator of the $i$th and $j$th coproduct inclusions $\BQ_\ell(1) \rightarrow \BQ_\ell(1)^{\oplus I_1}$ whenever $D_i$ and $D_j$ intersect.
\end{itemize}

Then there is a canonical $G_K$-equivariant and $\W$-graded isomorphism
\[
\gr^\W_\bullet\g\cong\f/\r\,.
\]
\end{theorem}

For us, the point of Theorem~\ref{thm:presentation} is that it allows us to automatically deduce properties of \'etale fundamental groupoids of smooth varieties from the corresponding properties of low-dimensional (co)homology of smooth proper varieties. For instance, the cohomology of a smooth proper variety is pure in degrees $\leq2$ (Proposition~\ref{Hi-mixed} plus Chow's Lemma and the Lefschetz Hyperplane Theorem) and Frobenius-semisimple in degrees $\leq1$ (Proposition~\ref{semisimple-H1}). This then implies the corresponding properties for the fundamental groupoid.

\begin{corollary}[Weight--monodromy and Frobenius-semisimplicity for the fundamental group]
Suppose that $Y$ is a smooth geometrically connected variety over a $p$-adic field $K$. Then $\OO(\pi_1^{\BQ_\ell}(Y_{\overline K};x,y))$ and $\Lie(\pi_1^{\BQ_\ell}(Y_{\overline K},z))$ are mixed and Frobenius-semisimple for all rational points $x,y,z\in Y(K)$ (or tangential points). Here, the weight filtration on $\OO(\pi_1^{\BQ_\ell}(Y_{\overline K};x,y))$ is dual to the weight filtration on $\OO(\pi_1^{\BQ_\ell}(Y_{\overline K};x,y))^\dual=\varprojlim_n\ZZ_\ell\llbrack\pi_1^\ell(Y_{\overline K};x,y)\rrbrack\otimes_{\ZZ_\ell}\BQ_\ell$ as defined in Definition~\ref{weight-defn}, and the weight filtration on $\Lie(\pi_1^{\BQ_\ell}(Y_{\overline K};z))$ is the one defined above.
\begin{proof}
We will argue the $\ell\neq p$ and $\ell=p$ cases simultaneously, recalling in the latter case that $\OO(\pi_1^{\BQ_\ell}(Y_{\overline K};x,y))^\dual$ is de Rham (Remark~\ref{de-rham-remark}), and hence so too is $\Lie(\pi_1^{\BQ_\ell}(Y_{\overline K},z))$. By Definition~\ref{def:canonical_splitting}, it suffices to prove that $\gr^\W_\bullet\OO(\pi_1^{\BQ_\ell}(Y_{\overline K}))^\dual$ and $\gr^\W_\bullet\Lie(\pi_1^{\BQ_\ell}(Y_{\overline K}))$ are pure and Frobenius-semisimple. Note that these objects are independent of the points $x,y,z$ up to canonical $G_K$-equivariant isomorphism, and that $\gr^\W_\bullet\OO(\pi_1^{\BQ_\ell}(Y_{\overline K}))^\dual$ is the completed universal enveloping algebra of $\gr^\W_\bullet\Lie(\pi_1^{\BQ_\ell}(Y_{\overline K}))=\gr^\W_\bullet\g$~\cite[Example~A.3.8]{alex-netan}. Since the class of pure and Frobenius-semisimple representations is closed under cokernels, it suffices to show that $\gr^\W_\bullet\g$ is pure and Frobenius-semisimple.

Now, for purity, it suffices by Theorem~\ref{thm:presentation} to show that $\HH_1^\et(\overline Y_{\overline K},\BQ_\ell)$, $\BQ_\ell(1)^{\oplus I_1}$, $\HH_2^\et(\overline Y_{\overline K},\BQ_\ell)$, $\bigoplus_{i\in I_1}\HH_1^\et(D_i,\BQ_\ell)(1)$ and $\BQ_\ell(2)^{\oplus I_2}$ are pure of weights $-1$, $-2$, $-2$, $-3$ and $-4$, respectively. Purity of the second and fifth of these is immediate; purity of the first, third and fourth follows from Proposition~\ref{Hi-mixed}.

For Frobenius-semisimplicity, it suffices to prove that $\HH_1^\et(\overline Y_{\overline K},\BQ_\ell)$ and $\BQ_\ell(1)^{\oplus I_1}$ are Frobenius-semisimple. The second is obvious; the first is Proposition~\ref{semisimple-H1}.
\end{proof}
\end{corollary}

The rest of this section is devoted to a proof of Theorem~\ref{thm:presentation}.

\subsection{The complex case}

To begin with, suppose that $K=\CC$, so that $\g$ is the base change to $\BQ_\ell$ of the Lie algebra of the $\BQ$-Mal\u cev completion of the Betti fundamental group of $Y=Y(\CC)$. Henceforth, we will let $\g$ instead denote this $\BQ$-linear pro-nilpotent Lie algebra, and prove the $\BQ$-linear analogue of Theorem~\ref{thm:presentation}, using Betti homology in place of \'etale homology. We fix an orientation on $\CC$, which provides us with a generator of $\BQ(1)$.

Throughout the proof, we will use a certain natural topological compactification $\widehat Y$ of $Y$, whose construction is due to A'Campo \cite{a'campo:zeta_de_monodromie}.

\begin{construction}\label{cons:acampo}
For each $i\in I_1$, we let $\widehat Y_i$ denote the real (oriented) blowup of $\overline Y$ along $D_i$, so that there is a proper map $\widehat Y_i\twoheadrightarrow\overline Y$ of manifolds with corners which is an isomorphism away from $D_i$ and is an oriented $S^1$-bundle over $D_i$. We let $\widehat Y$ denote the fibre product of $(\widehat Y_i)_{i\in I_1}$ over $\overline Y$, so that the inclusion $Y\hookrightarrow\overline Y$ factors through $Y\hookrightarrow\widehat Y\twoheadrightarrow\overline Y$. The first of these maps is a homotopy equivalence, so that $\g$ is canonically identified with the Lie algebra of the $\BQ$-pro-unipotent Betti fundamental group of $\widehat Y$.
\end{construction}

To begin the proof of Theorem~\ref{thm:presentation}, we first construct a graded Lie algebra map $\f\rightarrow\gr^\W_\bullet\g$, for which we need to construct maps $\HH_1(\overline Y,\BQ)\rightarrow\gr^\W_{-1}\g$ and $\BQ(1)^{\oplus I_1}\rightarrow\gr^\W_{-2}\g$. For the first of these, we take the inverse of the isomorphism $\gr^\W_{-1}\g\isoarrow\gr^\W_{-1}\HH_1(Y,\BQ)=\HH_1(\overline Y,\BQ)$ arising from the definition of $\W_\bullet$. For the second, we choose for each $i\in I_1$ a point $x_i$ in $D_i$ not lying in any other component of $D$. The fibre of $\widehat Y\twoheadrightarrow\overline Y$ over $x_i$ is an oriented circle $\gamma_i$, and we write $\log(\gamma_i)\in\gr^\W_{-2}\g$ for the element determined by this loop. The desired map $\BQ(1)^{\oplus I_1}\rightarrow\gr^\W_{-2}\g$ is then the map taking the $i$th basis vector to $\log(\gamma_i)$. It is easy to see that the free homotopy class of the loop $\gamma_i$, and hence the map $\BQ(1)^{\oplus I_1}\rightarrow\gr^\W_{-2}\g$, is independent of the choice of $x_i$.


Changing notation slightly from Theorem~\ref{thm:presentation}, we write $\r$ for the kernel of the map $\f\rightarrow\gr^\W_\bullet\g$ defined above. We thus want to prove the following.

\begin{enumerate}[label=\normalfont{\emph{\alph*})},ref=\normalfont{(\emph{\alph*})}]
	\item\label{thmpart:generators} The map $\f\rightarrow\gr^\W_\bullet\g$ is surjective.
	\item\label{thmpart:relations_satisfied} The images of the maps $\HH_2(\overline Y,\BQ)\rightarrow\gr_{-2}^\W\f$, $\bigoplus_{i\in I_1}\HH_1(D_i,\BQ)(1)\rightarrow\gr^\W_{-3}\f$ and $\BQ(2)^{\oplus I_2}\rightarrow\gr^\W_{-4}\f$ defined as in Theorem~\ref{thm:presentation} are contained in~$\r$.
	\item\label{thmpart:relations_suffice} The images of these maps generate $\r$ as an ideal; equivalently they generate $\r/[\f,\r]$ as a vector space.
\end{enumerate}

\begin{proof}[Proof of~\ref{thmpart:generators}]
We may lift the $\W$-graded map $\f\rightarrow\gr^\W_\bullet\g$ to a $\W$-filtered map $\f\rightarrow\g$. Now it is easy to see that the map $\f^\ab\rightarrow\g^\ab=\HH_1(Y,\BQ)=\HH_1(\widehat Y,\BQ)$ is surjective and $\W$-strict: it surjects onto the quotient $\gr^\W_{-1}\HH_1(Y,\BQ)=\HH_1(\overline Y,\BQ)$ and the kernel is generated by the classes of the loops $\gamma_i$, situated in degree $-2$. It follows that the map $\f\rightarrow\g$ is surjective and $\W$-strict; passing to the associated $\W$-gradeds shows that $\f\twoheadrightarrow\gr^\W_\bullet\g$ is surjective, as desired.
\end{proof}


The proof of~\ref{thmpart:relations_satisfied} is purely topological, showing that the claimed relations in $\gr^\W_\bullet\g$ are witnessed by certain immersed surfaces in $\widehat Y$.

\begin{proof}[Proof of~\ref{thmpart:relations_satisfied}]
\textit{Weight $-4$}: Suppose that $D_i\cap D_j\neq\emptyset$, and choose a point $x_{ij}$ in their intersection not lying in any other component of $D$. The fibre of $\widehat Y\twoheadrightarrow\overline Y$ over $x_{ij}$ is diffeomorphic to $S^1\times S^1$, with the two product projections corresponding to the projections $\widehat Y\twoheadrightarrow\widehat Y_i$, $\widehat Y\twoheadrightarrow\widehat Y_j$, respectively. It is easy to see that the two standard loops $\gamma_1=S^1\times\{\ast\}$ and $\gamma_2=\{\ast\}\times S^1$ are freely homotopic to the loops $\gamma_i$ and $\gamma_j$ respectively. Since the fundamental group of $S^1\times S^1$ is commutative, we have $\bigl[\log(\gamma_i),\log(\gamma_j)\bigr]=0$ in $\gr^\W_{-4}\g$. This says that the image of $\BQ(2)^{\oplus I_2}\rightarrow\gr^\W_{-4}\f$ is contained in $\r$, as desired.

\textit{Weight $-3$}: Consider any element $[\gamma]\in\HH_1(D_i,\ZZ)$, which we may choose to be represented by an immersed loop $\gamma\colon S^1\rightarrow D_i^\circ:=D_i\setminus\bigcup_{j\neq i}D_j$. The fibre of $\widehat Y\twoheadrightarrow\overline Y$ over $\gamma$ is an oriented $S^1$-bundle over $S^1$, and hence diffeomorphic to a torus $S^1\times S^1$. The immersion $S^1\times S^1\rightarrow\widehat Y$ takes the standard loops $\gamma_1$ and $\gamma_2$ to (a lift of) $\gamma$ and $\gamma_i$, respectively. As before, this yields the relation $\bigl[\log(\gamma),\log(\gamma_i)\bigr]=0$ in $\gr^\W_{-3}\g$, so that the image of $\HH_1(D_i,\BQ)(1)\rightarrow\gr^\W_{-3}\f$ is contained in $\r$, as desired.

\textit{Weight $-2$}: Consider any class $[\overline\Sigma]\in\HH_2(\overline Y,\ZZ)$, represented by an immersion $\iota\colon\overline\Sigma\rightarrow\overline Y$ with $\overline\Sigma$ a compact connected oriented surface which meets $D$ transversely at smooth points. We let $\iota^{-1}D=\{x_1,\dots,x_n\}$ denote the preimage of $D$ in $\overline\Sigma$ and for each $1\leq j\leq n$ write $D_{\iota(j)}$ for the component containing $x_i$ and $\epsilon_j\in\{\pm1\}$ for the local intersection number of $\overline\Sigma$ and $D$ at $x_j$.

The fundamental group of $\Sigma:=\overline\Sigma\setminus\{x_1,\dots,x_n\}$ has a presentation of the form
\[
\pi_1(\Sigma)=\left\langle(a_j)_{j=1}^g,(b_j)_{j=1}^g,(c_j)_{j=1}^n\::\:\prod_{j=1}^g[a_j,b_j]\cdot\prod_{j=1}^nc_j=1\right\rangle\,,
\]
where the $a_j$ and $b_j$ form a symplectic basis of $\HH_1(\overline\Sigma,\ZZ)$ and each $c_j$ is freely homotopic to a positively oriented loop around $x_j$. It follows that we have
\[
\sum_{j=1}^g\bigl[[\iota_*(a_j)],[\iota_*(b_j)]\bigr]+\sum_{j=1}^n\epsilon_j[\gamma_{\iota(j)}]=0
\]
in $\gr^\W_{-2}\g$. But $\sum_{j=1}^g[\iota_*(a_j)]\wedge[\iota_*(b_j)]$ is the image of $[\overline\Sigma]$ under the cup coproduct map, while the image of $[\overline\Sigma]$ under the dual Gysin map associated to $D_i$ is $[D_i\cap\overline\Sigma]=\sum_{\iota(j)=i}\epsilon_j$. Thus, the above identity establishes that the image of $[\overline\Sigma]$ in $\gr^\W_{-2}\g$ is zero, as desired.
\end{proof}

The proof of~\ref{thmpart:relations_suffice} is somewhat more technical, and follows \cite[\S5]{hain:torelli}. We will control the quotient $\r/[\f,\r]$ using Lie algebra homology, specifically the exact sequence
\begin{equation}\label{eq:graded_hochschild-serre}
0 \rightarrow \HH_2(\gr^\W_\bullet\g) \rightarrow \frac\r{[\f,\r]} \rightarrow \f^\ab \rightarrow \left(\gr^\W_\bullet\g\right)^\ab \rightarrow 0
\end{equation}
of low-degree terms in the Hochschild--Serre spectral sequence associated to the extension $0\rightarrow\r\rightarrow\f\rightarrow\gr^\W_\bullet\g\rightarrow0$ of pro-nilpotent Lie algebras. We control the leftmost term of this sequence using the following proposition, whose proof uses the Hodge theory of the pro-unipotent fundamental group in an essential way.

\begin{prop}\label{prop:magic_MHS}
\leavevmode
\begin{enumerate}[label=\normalfont{\emph{\roman*})}, ref=\normalfont{(\emph{\roman*})}]
	\item\label{proppart:gradings_okay} The natural map $\gr^\W_\bullet\HH_2(\g)\rightarrow\HH_2(\gr^\W_\bullet\g)$ is an isomorphism.
	\item\label{proppart:H_2_surjects} The natural map $\HH_2(Y,\BQ)\rightarrow\HH_2(\g)$ is surjective and $\W$-strict.
\end{enumerate}
Here, the $\W$-filtration on $\HH_2(\g)$ is the natural one whereby $\W_{-k}\HH_2(\g)$ is spanned by the classes of ``relations of weight $-k$''.
\begin{proof}

$\g$ carries a pro-mixed Hodge structure, compatible with the Lie bracket, whose weight filtration is $\W_\bullet$ \cite{hain:mhs}. The Deligne splitting provides a canonical splitting of the weight filtration on $\g_\CC$ compatible with the Lie bracket. It follows that the natural map $\gr^\W_\bullet\HH_2(\g)\rightarrow\HH_2(\gr^\W_\bullet\g)$ becomes an isomorphism after tensoring with $\CC$, and hence is already an isomorphism. This proves~\ref{proppart:gradings_okay}.

The surjectivity in~\ref{proppart:H_2_surjects} is well-known; $\W$-strictness follows from the fact that it is a morphism of mixed Hodge structures~\cite[Theorem~11.7]{carlson-hain}.
\end{proof}
\end{prop}

\begin{proof}[Proof of \ref{thmpart:relations_suffice}]
It follows from Proposition~\ref{prop:magic_MHS} and sequence~\eqref{eq:graded_hochschild-serre} that the grading on $\r/[\f,\r]$ is supported in degrees $-2$, $-3$ and $-4$. We thus want to show surjectivity of the three maps
\begin{align}
\HH_2(\overline Y,\BQ) &\rightarrow \gr^\W_{-2}\left(\r/[\f,\r]\right)\,, \label{map:wt2}\tag{$\ast_2$}\\
\bigoplus_{i\in I_1}\HH_1(D_i,\BQ)(1) &\rightarrow \gr^\W_{-3}\left(\r/[\f,\r]\right) = \gr^\W_{-3}\HH_2(\g)\,, \label{map:wt3}\tag{$\ast_3$}\\
\BQ(2)^{\oplus I_2} &\rightarrow \gr^\W_{-4}\left(\r/[\f,\r]\right) = \gr^\W_{-4}\HH_2(\g)\,, \label{map:wt4}\tag{$\ast_4$}
\end{align}
which we address one by one.

\textit{Weight $-3$}: It suffices to prove that \eqref{map:wt3} is equal to the composite
\begin{equation}\label{eq:composite_wt3}\tag{$\dag_3$}
\bigoplus_{i\in I_1}\HH_1(D_i,\BQ)(1)\twoheadrightarrow\gr^\W_{-3}\HH_2(Y,\BQ)\twoheadrightarrow\gr^\W_{-3}\HH_2(\g)
\end{equation}
where the first map is the map arising from the weight spectral sequence (the Leray spectral sequence associated to $\widehat Y\rightarrow\overline Y$) and the second map is the map from Proposition~\ref{prop:magic_MHS}\ref{proppart:H_2_surjects}. To do this, consider an element $[\gamma]\in\HH_1(D_i,\ZZ)$, which we represent by an immersed loop $\gamma\colon S^1\rightarrow D_i^\circ$ as in the proof of \ref{thmpart:relations_satisfied}. Pulling back $\widehat Y\rightarrow\overline Y$ along $\gamma$ yields an oriented $S^1$-bundle over $S^1$, so that $\gamma$ lifts to an immersion $\iota\colon S^1\times S^1\rightarrow\widehat Y$. Considering the map on Leray spectral sequences induced by the commuting square
\begin{center}
\begin{tikzcd}
S^1\times S^1 \arrow{r}{\iota}\arrow{d} & \widehat Y \arrow{d} \\
S^1 \arrow{r}{\gamma} & \overline Y
\end{tikzcd}
\end{center}
shows that the image of the class $[\gamma](1)$ under the first map of~\eqref{eq:composite_wt3} is the pushforward $\iota_*[S^1\times S^1]\in\gr^\W_{-3}\HH_2(Y,\BQ)$ of the orientation class on $S^1\times S^1$.

On the other hand, if we write $\g_{S^1\times S^1}$ for the Lie algebra of the $\BQ$-Mal\u cev completion of the fundamental group of $S^1\times S^1$ (so a $2$-dimensional vector group), then $\HH_2(\g_{S^1\times S^1})$ is one-dimensional, spanned by the class of the relation $[\log(\gamma_1),\log(\gamma_2)]$ with $\gamma_1$ and $\gamma_2$ the standard generators of $\pi_1(S^1\times S^1)$. Note that this generating class is the image of the orientation class on $S^1\times S^1$ under the natural map $\HH_2(S^1\times S^1,\BQ)\twoheadrightarrow\HH_2(\g_{S^1\times S^1})$. Now the image of $[\gamma](1)$ under~\eqref{map:wt3} is the pushforward $\iota_*([\log(\gamma_1),\log(\gamma_2)])\in\gr^\W_{-3}\HH_2(\g)$, so that from the commuting square
\begin{center}
\begin{tikzcd}
\HH_2(S^1\times S^1,\BQ) \arrow{r}{\iota_*}\arrow[two heads]{d} & \HH_2(Y,\BQ) \arrow[two heads]{d} \\
\HH_2(\g_{S^1\times S^1}) \arrow{r}{\iota_*} & \HH_2(\g)
\end{tikzcd}
\end{center}
we see that $[\gamma](1)$ has the same image under~\eqref{map:wt3} and~\eqref{eq:composite_wt3}. Thus these two maps are equal, as desired.

\smallskip

\textit{Weight $-4$}: Let $\widetilde I_2$ denote the set of ordered pairs of distinct indices $i,j\in I_1$ together with an irreducible component $D_{ij}$ of $D_i\cap D_j$. It suffices to prove that the composite of~\eqref{map:wt4}, precomposed with the natural surjection $\BQ(2)^{\oplus\widetilde I_2}\twoheadrightarrow\BQ(2)^{\oplus I_2}$, is equal to the composite
\begin{equation}\label{eq:composite_wt4}\tag{$\dag_4$}
\BQ(2)^{\oplus\widetilde I_2}\twoheadrightarrow\gr^\W_{-4}\HH_2(Y,\BQ)\twoheadrightarrow\gr^\W_{-4}\HH_2(\g)
\end{equation}
where the first map is the map arising from the weight spectral sequence and the second map is the map from Proposition~\ref{prop:magic_MHS}\ref{proppart:H_2_surjects}. To do this, consider an element $(i,j,D_{ij})\in\widetilde I_2$, and choose a point $x_{ij}\in D_{ij}$ not lying in any other component of $D$. The fibre of $\widehat Y\rightarrow\overline Y$ over $x_{ij}$ is a torus $S^1\times S^1$, with the two standard loops $\gamma_1$ and $\gamma_2$ being freely homotopic to the loops $\gamma_i$ and $\gamma_j$, as in the proof of~\ref{thmpart:relations_satisfied}. As above, we find that the image of $[x_{ij}](2)$ under the first map of~\eqref{eq:composite_wt4} is the pushforward $\iota_*([S^1\times S^1])$ of the orientation class under the inclusion $\iota\colon S^1\times S^1\rightarrow\widehat Y$, while its image under~\eqref{map:wt4} is $\iota_*([\log(\gamma_1),\log(\gamma_2)])$. The same argument establishes that $\iota_*([S^1\times S^1])$ maps to $\iota_*([\log(\gamma_1),\log(\gamma_2)])$ under the map $\gr^\W_{-4}\HH_2(Y,\BQ)\rightarrow\gr^\W_{-4}\HH_2(\g)$, and hence we are done also in this case.

\smallskip

\textit{Weight $-2$}: There are two claims to prove here: that map~\eqref{map:wt2} restricts to the surjection $\gr^\W_{-2}\HH_2(Y,\BQ)\twoheadrightarrow\gr^\W_{-2}\HH_2(\g)$ from Proposition~\ref{prop:magic_MHS}\ref{proppart:H_2_surjects}; and that it induces a surjection from $\HH_2(\overline Y,\BQ)/\gr^\W_{-2}\HH_2(Y,\BQ)$ to the kernel of $\f^\ab\twoheadrightarrow\left(\gr^\W_\bullet\g\right)^\ab$. For the first of these claims, we proceed as above. Pick a class $[\Sigma]\in\HH_2(Y,\BQ)$, represented by a immersed closed oriented surface $\iota\colon\Sigma\rightarrow Y$, and write $\g_\Sigma$ for the Lie algebra of the $\BQ$-Mal\u cev completion of the fundamental group of $\Sigma$. Thus $\HH_2(\g_\Sigma)$ is spanned by the class of the relation $\log\left(\prod_{j=1}^g[a_j,b_j]\right)$, where the $a_j$ and $b_j$ are the standard generators of the fundamental group of a closed oriented surface of genus $g$. Again, this class is the image of the orientation class of $\Sigma$ under the map $\HH_2(\Sigma,\BQ)\twoheadrightarrow\HH_2(\g_\Sigma)$, and hence the image of $[\Sigma]\in\HH_2(Y,\BQ)$ under~\eqref{map:wt2} is the same as its image under the surjection $\gr^\W_{-2}\HH_2(Y,\BQ)\twoheadrightarrow\gr^\W_{-2}\HH_2(\g)$ from Proposition~\ref{prop:magic_MHS}\ref{proppart:H_2_surjects}. This establishes the first claim.

For the second claim, we note that the low-degree terms of the Leray spectral sequence provides an exact sequence
\[
0 \rightarrow \gr^\W_{-2}\HH_2(Y,\BQ) \rightarrow \HH_2(\overline Y,\BQ) \rightarrow \BQ(1)^{\oplus I_1} \rightarrow \gr^\W_{-2}\HH_1(Y,\BQ) \rightarrow 0
\]
where the central map is dual to the sum of the Gysin maps and the right-hand map is the weight $-2$ part of $\f^\ab\twoheadrightarrow\left(\gr^\W_\bullet\g\right)^\ab$. This directly establishes the second claim.
\end{proof}

\subsection{The general case}
Finally, we deduce Theorem~\ref{thm:presentation} from the special case $K=\CC$. By the Lefschetz principle, it suffices to prove this when $\overline K$ admits an embedding $\overline K\hookrightarrow\CC$. Using this embedding, we obtain from the previous section a presentation of $\gr^\W_\bullet\g$ of the desired form, which we wish to show is $G_K$-equivariant. The only part of this which is non-obvious is $G_K$-equivariance of the map $\BQ_\ell(1)^{\oplus I_1}\rightarrow\gr^\W_{-2}\g$.

To do this, we pick for each $i$ a point $x_i\in D_i(\overline K)$ not lying in any other component, and write $\widehat\OO_{\overline Y_{\overline K},x_i}$ for the completed local ring at $x_i$. We choose a local parameter $t_i\in\OO_{\overline Y_{\overline K},x_i}$ cutting out $D_i$, and denote the evident morphism $\Spec\left(\widehat\OO_{\overline Y_{\overline K},x_i}\left[1/t_i\right]\right)\rightarrow Y$ by $\gamma_i^\alg$. Abhyankar's Lemma \cite[Proposition~XIII.5.2]{SGA1} ensures that the \'etale fundamental group of $\Spec\left(\widehat\OO_{\overline Y_{\overline K},x_i}\left[1/t_i\right]\right)$ is canonically isomorphic to $\widehat\ZZ(1)$, and hence pushforward yields an outer homomorphism $\widehat\ZZ(1)\rightarrow\pi_1^\et(Y_{\overline K})$. The key result here states that the maps $\gamma_i^\alg$ are algebraic avatars of the loops $\gamma_i$.

\begin{lemma}\label{lem:algebraic_loops}
For every embedding $\overline K\hookrightarrow\CC$, the outer homomorphism $\gamma_{i,*}^\alg\colon\widehat\ZZ(1)\rightarrow\pi_1^\et(Y_{\overline K})$ is identified with the profinite completion of the outer homomorphism $\gamma_{i,*}\colon\ZZ(1)\rightarrow\pi_1(Y(\CC))$ under the usual identification $\pi_1^\et(Y_{\overline K})\cong\widehat\pi_1(Y(\CC))$.
\end{lemma}

\begin{corollary}
The outer homomorphisms $\gamma_{i,*}^\alg\colon\widehat\ZZ(1)\rightarrow\pi_1^\et(Y_{\overline K})$ are independent of the choice of points $x_i$, and are Galois-equivariant in the sense that $\sigma\circ\gamma_{i,*}^\alg\circ\sigma^{-1}=\gamma_{\sigma(i),*}^\alg$ for all $\sigma\in G_K$.
\begin{proof}
If $\overline K$ admits an embedding in $\CC$, then the first assertion is immediate; in general use the Lefschetz Principle to reduce to this case. The second follows easily from the first.
\end{proof}
\end{corollary}

\begin{proof}[Proof of Lemma~\ref{lem:algebraic_loops}]
It suffices to prove this in the case $K=\CC$. Denote by $\OO_{\overline Y^\an,x_i}$ (resp.\ $\OO_{\overline Y^\an,x_i}$) the ring of germs of holomorphic functions on $\overline Y^\an$ (resp.\ $Y^\an$) at $x_i$, i.e.\ the direct limit of the rings $\OO_{\overline Y^\an}(\overline U_i')$ (resp.\ $\OO_{Y^\an}(\overline U_i'\cap Y^\an)$) as $\overline U_i'$ runs over open neighbourhoods of $x_i$ in $\overline Y_i$. Fix one such neighbourhood $\overline U_i$, and assume that $\overline U_i$ is biholomorphic to a polydisc $\Disc^n$ in such a way that $\overline U_i\cap D_i=\{0\}\times\Disc^{n-1}$ and $\overline U_i$ meets no other components of $D_i$. We write $U_i=\overline U_i\cap Y^\an$. There is thus a commuting diagram
\begin{center}
\begin{tikzcd}[column sep=small]
{} & \Spec\left(\OO_{Y^\an,x_i}\right) \arrow{r}\arrow{d} & \Spec\left(\OO_{Y^\an}(U_i)\right) \arrow{d} \\
\Spec\left(\widehat\OO_{\overline Y,x_i}[1/t_i]\right) \arrow{r} & \Spec\left(\OO_{\overline Y^\an,x_i}[1/t_i]\right) \arrow{r} & Y
\end{tikzcd}
\end{center}
of schemes, where the composite along the bottom row is $\gamma_i^\alg$. We will deduce the lemma by showing that applying the functor $\pi_1^\et$ to this diagram yields the diagram
\begin{center}
\begin{tikzcd}
{} & \widehat\ZZ(1) \arrow[equals]{r}{(2)}\arrow[equals]{d}[swap]{(4)} & \widehat\ZZ(1) \arrow{d}{\widehat\gamma_{i,*}}[swap]{(1)} \\
\widehat\ZZ(1) \arrow[equals]{r}{(3)} & \widehat\ZZ(1) \arrow{r}{\gamma_{i,*}^\alg} & \pi_1^\et(Y)
\end{tikzcd}
\end{center}
of outer homomorphisms. We justify this in several steps.

(1) Note that by construction $U_i$ is homotopy equivalent to a complex punctured disc, and so has fundamental group $\ZZ(1)$; moreover the outer homomorphism $\ZZ(1)\rightarrow\pi_1(Y^\an)$ induced by the inclusion $U_i\hookrightarrow Y^\an$ is the pushforward map $\gamma_{i,*}$. But there is an equivalence of categories between finite coverings of $U_i$ and finite \'etale algebras over $\OO_{Y^\an}(U_i)$, given by taking a finite covering $\pi\colon V\rightarrow U_i$ to the algebra $\OO(V^\an)$ where $V^\an$ is endowed with the unique complex structure making $\pi$ holomorphic. It thus follows that $\pi_1^\et\left(\Spec\left(\OO_{Y^\an}(U_i)\right)\right)=\widehat\ZZ(1)$, and that the outer homomorphism $\widehat\ZZ(1)\rightarrow\pi_1^\et(Y)$ induced by the scheme morphism $\Spec\left(\OO_{Y^\an}(U_i)\right)\rightarrow Y$ is equal to $\widehat\gamma_{i,*}$.

(2) A similar argument as for (1) establishes that $\Spec\left(\OO_{Y^\an},x_i\right)$ is the inverse limit of the profinite completions of the fundamental groups of $\overline U_i'\cap Y^\an$, i.e.\ $\widehat\ZZ(1)$, and that map (2) is the identity.

(3) The ring $\OO_{\overline Y^\an,x_i}$ is strictly Henselian, so Abhyankar's Lemma again implies that $\pi_1^\et\left(\Spec\left(\OO_{\overline Y^\an,x_i}[1/t_i]\right)\right)=\widehat\ZZ(1)$, and that map (3) is the identity.

(4) We know that the finite \'etale algebras over both $\OO_{\overline Y^\an,x_i}[1/t_i]$ and $\OO_{Y^\an,x_i}$ are given by adjoining roots of $t_i$: the former via Abhyankar's Lemma and the latter via identifying these algebras with covers of $U_i$. It follows that (4) is the identity.
\end{proof}

\bibliographystyle{alpha}
\bibliography{semisimplicity_references}

\end{document}